\newtheorem{theo}{Theorem}[section]
\newtheorem{prop}[theo]{Proposition}
\newtheorem{lemme}[theo]{Lemma}
\newtheorem{assumption}[theo]{Assumption}
\newcommand{\w}{\widetilde}
\newcommand{\R}{\mathbb{R}}
\title{Minimax convergence rates of a binary classification procedure for time-homogeneous SDE paths}
\author{Eddy Michel Ella-Mintsa$^{(1),(2)}$}
\begin{document}

\maketitle
\begin{center}
(1) LAMA, Université Gustave Eiffel\\ 
(2) MIA Paris-Saclay, AgroParisTech
\end{center}

%%%%%%%%%%%%%%%%%%%%%%%%%%%%%%%
\begin{abstract}
In the context of binary classification of trajectories generated by time-homogeneous stochastic differential equations, we consider a mixture of two diffusion processes characterized by a stochastic differential equation (SDE) whose drift coefficient depends on the class and whose diffusion coefficient is independent of the class. We assume that the drift and diffusion coefficients are unknown as well as the law of the discrete random variable that models the class. In this paper, we study the minimax convergence rates for the excess risk of the resulting plug-in classifier under different sets of assumptions on the diffusion model. As the plug-in classifier is based on nonparametric estimators of drift and diffusion coefficients, we established rates of convergence for projection estimators of drift coefficients on the real line. We propose a new methodology for the study of the lower bound on the excess risk. The theoretical study is completed with a numerical experiment over simulated data.   
\end{abstract}

\vspace*{0.25cm} 

\noindent {\bf Keywords}: Diffusion process; Binary classification; Nonparametric estimation; Plug-in classifier, Minimax rates\\

\noindent MSC: 62G05; 62M05; 62H30\\

%%%%%%%%%%%%%%%%%%%%%%%%%%
%%%%%%%%%%%%%%%%%%%%%%%%%%
\section{Introduction}
\label{sec:intro}

For several decades, functional data have been the subject of numerous analyzes and statistical studies in fields such as chemistry (\cite{rivera2000time}), physics (\cite{romanczuk2012active}), biology (\cite{etchegaray2016modelisation}), finance (\cite{lamberton2011introduction}) or meteorology (\cite{venalainen2002meteorological}). Diffusion paths are a particular type of functional data generated by Markov processes, solutions of stochastic differential equations. These data are generally used to model various phenomena such as population genetics (see, e.g. \cite{crow2017introduction}), population dynamics (see, e.g. \cite{nagai1983asymptotic}, \cite{etchegaray2016modelisation}) or asset price modeling in finance (see, e.g. \cite{karoui1998robustness}). This article deals with supervised classification of trajectories generated by a mixture of time-homogeneous diffusion processes. To this end, we consider a random pair $(X,Y)$, where the characteristic $X = (X_t)_{t \in [0,1]}$ is a continuous-time diffusion process defined in a filtered probability space $\left(\Omega, \mathcal{F}, \left(\mathcal{F}_t^X\right)_{t \in [0,1]}, \mathbb{P}_X\right)$, where $\left(\mathcal{F}_t^X\right)_{t \in [0,1]}$ is its natural filtration. The process $X$ is observed at discrete times, with time step $\Delta_n = 1/n$. Its drift coefficient $b_Y^*$ depends on the label $Y$, which is a discrete random variable taking values in $\mathcal{Y} = \{0,1\}$, and its diffusion coefficient $\sigma^*$ is independent of the label $Y$. The joint distribution $\mathbb{P}_{X,Y}$ of the random couple $(X,Y)$ is assumed to be unknown, and the random pair $(X,Y)$ is built on the probability space $\left(\Omega, \mathcal{F}, \mathbb{P}_{X,Y}\right)$. A binary classification procedure is a measurable function $g$ that predicts the label of a path $X$ in the set $\mathcal{Y} = \{0,1\}$. The prediction of $g$ has a cost that is measured by the quantity $\mathbb{P}_{X,Y}(g(X) \neq Y)$ called the classification error of $g$. The Bayes classifier is a classification rule $g^*$ that minimizes the classification error, that is, 
\begin{equation*}
    g^* \in \underset{g}{\arg\min}{~\mathbb{P}_{X,Y}(g(X) \neq Y)}.
\end{equation*}
In this paper, we study minimax rates of an empirical classification procedure $\widehat{g}$, built from a learning sample $\mathcal{D}_N$ such that its excess risk
\begin{equation*}
    \mathbb{P}\left(\widehat{g}(X) \neq Y\right) - \mathbb{P}(g^*(X) \neq Y)
\end{equation*}
tends to zero as the size $N$ of $D_N$ tends to infinity.  

\paragraph*{Related works.}

Until now, few papers have tackled the supervised classification problem for diffusion paths. The first article to appear on this topic is \cite{cadre2013supervised} who proposed a nonparametric binary classification procedure for diffusion paths based on minimization of the empirical classification error. The author established an upper bound of order $N^{-k/(2k+1)}$ for the excess risk of the empirical classifier over the space $\mathcal{C}^k(\mathbb{R})$ of $k$ times continuously differentiable functions. \cite{denis2020classif} studied a multiclass classification procedure for diffusion paths in a parametric setting and obtained the consistency of the empirical classifier. In addition, they proposed a nonparametric multiclass classification procedure based on empirical risk minimization and established an upper bound on the excess risk of order $N^{-\beta/(2\beta+1)}$ over the H\"older class of smoothness parameter $\beta \geq 1$. Close to our statistical framework, \cite{denis2024nonparametric} proposed a plug-in-type multiclass classification procedure for time-homogeneous diffusion paths and derived a sub-optimal convergence rate of order $\exp\left(c\sqrt{\log(N)}\right)N^{-1/5}$ where $c>0$ is constant and $\beta \geq 1$ is the smoothness parameter of the H\"older space, the drift and diffusion coefficients being unknown. Then, under stronger assumptions on the drift coefficients (bounded) and the diffusion coefficient ($\sigma^* = 1$), the authors established a faster rate of order $\exp\left(\sqrt{c\log(N)}\right)N^{-\beta/(2\beta+1)}$ when the drift coefficients are bounded and $\sigma^* \equiv 1$. Finally, \cite{gadat2020optimal} focused on trajectories generated by Gaussian processes, solutions of the white-noise model. They proposed a plug-in type binary classifier and obtained an optimal convergence rate of order $N^{-s/(2s+1)}$ over the Sobolev space of smoothness parameter $s>0$. The purpose of the present paper is to study minimax convergence rates for the plug-in classifier under different sets of assumptions on the drift and diffusion coefficients. These rates extend and improve the rates established in the literature, especially in \cite{denis2024nonparametric} whose statistical framework is similar to ours. 

The plug-in classifier resulting from the mixture model considered in this paper is based on nonparametric estimation of drift and diffusion coefficients. This estimation problem is widely investigated in the literature. Focusing only on the nonparametric framework, projection estimators of drift coefficient on a compact interval are proposed in \cite{comte2007penalized} and \cite{denis2021ridge}. For the case of the diffusion coefficient, projection estimators on a compact interval are proposed in \cite{hoffmann1999lp}, \cite{comte2007penalized}, and \cite{ella2024nonparametric}. In this paper, we are interested in nonparametric estimation of drift and diffusion coefficients on the real line. In this context, projection estimators of the drift coefficient on the real line are proposed in \cite{denis2024nonparametric} from $N$ independent high-frequency observations of the diffusion process, and its risk bound is shown to reach a rate of order $\log^{6\beta}(N)N^{-\beta/(2\beta+1)}$ over the H\"older space of regularity parameter $\beta \geq 1$ where $\sigma^* \equiv 1$. Finally, \cite{ella2024nonparametric} built projection estimators of the diffusion coefficient on $\mathbb{R}$ from $N$ independent observations of the diffusion process at discrete times with time step $\Delta_n = \mathrm{O}(N^{-1})$, and derived a risk bound of order $(Nn)^{-\beta/4(\beta+1)}$ on the H\"older space of smoothness parameter $\beta \geq 1$. \cite{ella2025minimax} followed by the study of a faster rate of order $\exp\left(\sqrt{c\log(N)}\right)N^{-3\beta/2(2\beta+1)}$. In this paper, we extend the study of risk bounds for drift estimators on the real line when $\sigma^*$ is unknown, and we improve the risk bound established in \cite{denis2021ridge} when the estimation interval is compact.  

\paragraph*{Main contributions. }

The main contributions of this paper are as follows. 

\begin{enumerate}
    \item Focusing on the nonparametric drift estimation on the real line, we extend the result established in \cite{denis2024nonparametric} to stochastic differential equations with unknown diffusion coefficient $\sigma^*$. This extension requires the use of the exact formula for the transition density for a unique strong solution of a Stochastic Differential Equation (S.D.E.) provided in \cite{dacunha1986estimation} and strong assumptions on the drift and diffusion coefficients. We also derive rates of orders $\log^{3\beta}(N)N^{-\beta/(2\beta+1)}$ and $N^{-\beta/(2\beta+1)}$ providing additional assumptions on both S.D.E. coefficients.
    \item We establish different upper bounds on the excess risk of the plug-in classifier under different sets of assumptions on the coefficients of the diffusion model. These upper bounds are resumed in Table~\ref{tab:rates}.
    \item We propose a new approach for the study of a lower bound on the excess risk of the plug-in classifier. This approach is specifically suited to plug-in classification procedures for diffusion trajectories, as it relies on analyzing a lower bound for the estimation risk of the drift coefficients. We establish a lower bound of order $N^{-\beta/(2\beta+1)}$ over the H\"older space of smoothness parameter $\beta \geq 1$. 
\end{enumerate}

\paragraph*{Outline of the paper. }

Section~\ref{sec:statSetting} is devoted to the statistical setting of our mixture model of diffusion processes. In Section~\ref{sec:upper-bound-plug-in} we study upper and lower bounds on the excess risk of the plug-in classifier. Finally, Sections~\ref{sec:conclusion} and \ref{sec:proofs} are devoted, respectively, to the conclusion and proofs of the results of the paper. 

\begin{table}[h]
\centering
\begin{tabular}{lcc}
\toprule
 & \textbf{Lower bound} & \textbf{Upper bound} \\
\midrule
$b_0^*$ and $b_1^*$ are bounded and $\sigma^*$ unknown & $cN^{-\beta/(2\beta+1)}$ & $C\exp\left(\mathfrak{c}\sqrt{\log(N)}\right)N^{-\beta/(2\beta+1)}$ \\
\midrule
$b_0^*$ and $b_1^*$ are integrable on $\mathbb{R}$ and $\sigma^*$ unknown & $cN^{-\beta/(2\beta+1)}$ & $C\log^{3\beta+2}(N)N^{-\beta/(2\beta+1)}$ \\
\midrule
$b_0^*$ and $b_1^*$ are compactly supported and $\sigma^* = 1$ & $cN^{-\beta/(2\beta+1)}$ & $CN^{-\beta/(2\beta+1)}$ \\
\midrule
$b_0^*$ and $b_1^*$ are unbounded and $\sigma^* = 1$ & $cN^{-\beta/(2\beta+1)}$ & $C\exp\left(\mathfrak{c}\sqrt{\log(N)}\right)N^{-\beta/(2\beta+1)}$ \\
\bottomrule
\end{tabular}
\caption{\small \textit{This table shows the achieved minimax convergence rates for the excess risk of the plug-in classifier under different sets of assumptions on the diffusion model for binary classification of the generated diffusion paths. The upper and lower bounds are given with constants $C, c, \mathfrak{c} > 0$ independent of $N$.}}
\label{tab:rates}
\end{table}

%%%%%%%%%%%%%%%%%%%%%%%%%%%%%%%%%%%%%%%%%%%%%%%%%%%%%%%%%
%%%%%%%%%%%%%%%%%%%%%%%%%%%%%%%%%%%%%%%%%%%%%%%%%%%%%%%%%
\section{Statistical setting}
\label{sec:statSetting}
%%%%%%%%%%%%%%%%%%%%%%%%%%%%%%%%%%%%%%%%%%%%%%%%%%%%%%%%%
%%%%%%%%%%%%%%%%%%%%%%%%%%%%%%%%%%%%%%%%%%%%%%%%%%%%%%%%%

In this paper, we consider a binary classification model in which the characteristic $X = (X_t)_{t \in [0,1]}$ is a solution of a stochastic differential equation such that for any $w \in \Omega,$ $X(w)$ belongs to the space $\mathcal{C}^{0}\left([0,1], \mathbb{R}\right)$ of continuous functions on the compact interval $[0,1]$, taking values in  $\mathbb{R}$. More precisely, the diffusion process $X$ is a solution of the following mixture model,
\begin{equation}\label{eq:Diff-Model}
    dX_t = b_Y^*(X_t)dt + \sigma^{*}(X_t)dW_t, ~~ t \in [0,1],  ~~ X_0 = x_0,
\end{equation}
where $W = (W_t)_{t \in [0,1]}$ is the standard Brownian motion, $Y \in \mathcal{Y} = \{0,1\}$ is a binary random variable independent of $W$ that represents the class, following an unknown discrete law ${\bf p}^* = (p_0^*, p_1^*)$ with the assumption that $p_0^*, p_1^* \in (0,1)$. Denote by $\mathcal{X}$ the set of diffusion paths. The performance of any classifier $g : \mathcal{X} \longrightarrow \mathcal{Y}$ is measured by its average classification error given for any couple $(X,Y)$ by
\begin{equation}\label{eq:RiskClassi}
    \mathcal{R}(g) = \mathbb{P}\left(g(X) \neq Y\right) = \mathbb{E}\left[\mathds{1}_{g(X) = 0}\left(1 - \Phi^*(X)\right) + \mathds{1}_{g(X) = 1}\Phi^*(X)\right],
\end{equation}
where $\Phi^*$ is a regression function given for all $x \in \mathbb{R}$ by $\Phi^*(x) = \mathbb{P}(Y = 1 | X = x)$. The Bayes classifier $g^*$ is the classifier that minimizes the classification error, that is,
\begin{equation}\label{eq:Bayes}
    g^* = \underset{g \in \mathcal{G}}{\arg\min}{~\mathcal{R}(g)},
\end{equation}
where $\mathcal{G}$ is a chosen set of classification rules. Since the distribution of the random pair $(X,Y)$ is assumed to be unknown, we build, from a learning sample $\mathcal{D}_N$, an empirical classification procedure of plug-in type $\widehat{g}$ that mimics the Bayes classifier $g^*$ and such that its excess risk $\mathcal{R}(\widehat{g}) - \mathcal{R}(g^*)$ tends to zero as the size $N$ of the learning sample $\mathcal{D}_N$ tends to infinity. \\

We start in Section~\ref{subsec:Notations-Definitions} with some necessary definitions and notation for the study of minimax rates of $\widehat{g}$. In Section~\ref{subsec:ass}, we present the assumptions on the diffusion model \eqref{eq:Diff-Model}, and we give more details on the Bayes classifier $g^*$ in Section~\ref{subsec:BayesClassifier}.

%%%%%%%%%%%%%%%%%%%%%%%%%%%%%%%%%%%%%%%%%%%%%%%%%%%%%%%%%%%%%%%%%%%%%%%%%%%%%%%%%%%%%%%%
%%%%%%%%%%%%%%%%%%%%%%%%%%%%%%%%%%%%%%%%%%%%%%%%%%%%%%%%%%%%%%%%%%%%%%%%%%%%%%%%%%%%%%%%
\subsection{Notation and definitions}
\label{subsec:Notations-Definitions}
%%%%%%%%%%%%%%%%%%%%%%%%%%%%%%%%%%%%%%%%%%%%%%%%%%%%%%%%%%%%%%%%%%%%%%%%%%%%%%%%%%%%%%%%
%%%%%%%%%%%%%%%%%%%%%%%%%%%%%%%%%%%%%%%%%%%%%%%%%%%%%%%%%%%%%%%%%%%%%%%%%%%%%%%%%%%%%%%%

Consider the learning sample $\mathcal{D}_N = \left\{(\bar{X}^j, Y_j), ~ j = 1, \ldots, N\right\}$ of size $N$, where $\bar{X}^j = (X_{k\Delta_n}^j)_{0\leq k\leq n}$ is a discrete observation of the process $X$ with time step $\Delta_n = 1/n$. We denote by $\mathbb{P}$, the joint distribution of the random vector $\left((\bar{X}^1, Y_1), \ldots, (\bar{X}^N, Y_N), (\bar{X},Y)\right)$ and by $\mathbb{E}$ its corresponding expectation. $\mathbb{P}_X$ is the marginal distribution of the process $X$ and $\mathbb{E}_X$ is its corresponding expectation. For each $i \in \mathcal{Y}$, we denote by $\mathbb{P}^{(i)}$ and $\mathbb{E}^{(i)}$ the following conditional probability and conditional expectation:
\begin{equation*}
    \mathbb{P}^{(i)} := \mathbb{P}\left(.\biggm\vert \mathds{1}_{Y_1 = i}, \ldots, \mathds{1}_{Y_N = i}\right), ~~ \mathbb{E}^{(i)}\left[. \biggm\vert \mathds{1}_{Y_1 = i}, \ldots, \mathds{1}_{Y_N = i}\right].
\end{equation*}
For any continuous function $h$ such that $\mathbb{E}\left[\int_{0}^{1}{h^2(X_s)ds}\right] < \infty$, we define the following metrics:
\begin{equation*}
    \left\|h\right\|_{n}^2 := \mathbb{E}_X\left[\dfrac{1}{n}\sum_{k=0}^{n-1}{h^2(X_{k\Delta_n})}\right], ~~ \left\|h\right\|_{n,i}^2 := \mathbb{E}_{X|Y=i}\left[\dfrac{1}{n}\sum_{k=0}^{n-1}{h^2(X_{k\Delta_n})}\right], 
\end{equation*}
where $\mathbb{E}_{X|Y=i}$ is the corresponding expectation of the marginal distribution of $X$ given the event $\{Y=i\}$, with $i \in \mathcal{Y} = \{0,1\}$. In the event $\{N_i > 1\}$, we also define the following pseudo-norms:
\begin{equation*}
    \left\|h\right\|_{n,N}^2 := \dfrac{1}{Nn}\sum_{j=1}^{N}{\sum_{k=0}^{n-1}{h^2(X_{k\Delta_n}^j)}}, ~~ \left\|h\right\|_{n,N_i}^2 := \dfrac{1}{N_in}\sum_{j=1}^{N_i}{\sum_{k=0}^{n-1}{h^2(X_{k\Delta_n}^{j,i})}}, 
\end{equation*}
where $N_i = \dfrac{1}{N}\sum_{j=1}^{N}{\mathds{1}_{Y_j = i}}$ is the random size of the learning sub-sample $\mathcal{D}_N^i = \left\{(\bar{X}^{j,i}, i), ~ j \in \mathcal{I}_i\right\}$, with $i \in \mathcal{Y}$ and $\mathcal{I}_i \subset \{1, \ldots, N\}$ the set of $j \in \{1, \ldots, N\}$ such that $Y_j = i$. In the sequel, without loss of generality, we set $x_0 = 0$. We also consider the pseudo-norm $\|.\|_X$ and the $L^2-$norm $\|.\|$ given for any continuous function such that $h \in \mathbb{L}^2(\mathbb{R})$ by
\begin{equation*}
    \|h\|_X^2 := \int_{0}^{1}{h^2(X_t)dt}, ~~ \|h\|^2 := \int_{-\infty}^{+\infty}{h^2(x)dx}.
\end{equation*}
The above metrics are used intensively in the study of the lower bound of the excess risk of the plug-in classifier. Finally, for any square matrix $P \in \mathcal{M}_r(\R)$ of size $r \in \mathbb{N}\setminus\{0,1\}$ such that $P$ is symmetric and for all $u \in \R^r, ~ u^{\prime}Pu > 0$ with $u^{\prime}$ the transpose vector of $u$, the operator norm $\|P^{-1}\|_{\mathrm{op}}$ of the inverse $P^{-1}$ is given by
\begin{equation*}
    \|P^{-1}\|_{\mathrm{op}} := \dfrac{1}{\min\{\lambda_1, \ldots, \lambda_d\}},
\end{equation*}

where $\{\lambda_1, \ldots, \lambda_d\}$ is the spectrum of $P$, with $d \leq r$. Finally, we adopt the following notation.
\begin{itemize}
    \item For any $p,q \in \mathbb{N}$ such that $p < q$, we set $[\![p,q]\!] = \{p, p+1, \ldots,q\}$.
    \item For any compact interval $I \subset \mathbb{R}, ~ |I| = \max I - \min I$.
    \item $\mathrm{Supp}(f)$ denotes the support of any function $f$.
    \item For any subset $E \subset \mathbb{R}$, $\mathrm{int}(E)$ denotes the topological interior of $E$.
    \item For all $u,v \in \mathbb{R}^{d}$ with $d \geq 2$, we denote by $<u,v> = u^{\prime}v$ the scalar product of $u$ and $v$.
\end{itemize}

%%%%%%%%%%%%%%%%%%%%%%%%%%%%%%%%%%%%%%%%%%%%%%%%%%%%%%
\subsection{Assumptions}
\label{subsec:ass}
%%%%%%%%%%%%%%%%%%%%%%%%%%%%%%%%%%%%%%%%%%%%%%%%%%%%%%%

We make the following assumptions on our diffusion model.
\begin{assumption}
\label{ass:Reg}
There exists a constant $L_0 > 0$ such that the functions $b_0^*, ~ b_1^*$ and $\sigma^*$ are $L_0-$Lipschitz, that is:
    $$ \forall x,y \in \mathbb{R}, ~ \left|b_0^*(x) - b_0^*(y)\right| + \left|b_1^*(x) - b_1^*(y)\right| + |\sigma^*(x) - \sigma^*(y)| \leq L_0|x-y|$$
\end{assumption}

\begin{assumption}\label{ass:Ell}
    There exist two constants $\sigma_0^*, \sigma_1^* > 0$ such that 
    $$\sigma_0^* \leq \sigma^{*}(x) \leq \sigma_1^*, ~ \forall x \in \mathbb{R}.$$
\end{assumption}
We draw three important consequences from Assumptions~\ref{ass:Reg} and \ref{ass:Ell}. First, the diffusion model~\eqref{eq:Diff-Model} admits a unique strong solution $X = (X_t)_{t \in [0,1]}$ (see \cite{karatzas2014brownian}, \textit{Chapter 5, Theorem 2.9, p.289}). Second, the diffusion process $X = (X_t)_{t \in [0,1]}$ admits a transition density $(t,x) \mapsto \Gamma_X(t, x)$ given as follows:
\begin{equation*}
    \Gamma_X(t, x) = \mathbb{P}(Y=0)\Gamma_{0,X}(t, x) + \mathbb{P}(Y=1)\Gamma_{1,X}(t, x),
\end{equation*}
where $\Gamma_{0,X}$ and $\Gamma_{1,X}$ are the transition densities of the process $X$ respectively under the events $\{Y = 0\}$ and $\{Y = 1\}$ (see \cite{friedman1975stochastic}, \textit{Chapter 5, Theorem 3.1, p.109}). Moreover, that is, there exist constants $c,C>1$ such that for all $t \in (0,1]$ and for all $x\in \mathbb{R}$, we have:
\begin{equation}\label{eq:Transition-density}
    \dfrac{1}{C\sqrt{t}}\exp\left(-c\dfrac{x^2}{t}\right) \leq \Gamma_{X}(t, x) \leq \dfrac{C}{\sqrt{t}}\exp\left(-\dfrac{x^2}{ct}\right).
\end{equation}
(see \cite{gobet2002lan}, \textit{Proposition 1.2}). Third, the diffusion process $X$ admits moments of any order. In fact, under Assumption~\ref{ass:Reg}, we obtain from \cite{ella2024nonparametric}, \textit{Lemma 2.2} that
\begin{equation}\label{eq:Moments}
    \forall~ q \geq 1, ~~ \mathbb{E}\left[\underset{t \in [0,1]}{\sup}{|X_t|^{q}}\right] < C_q,
\end{equation}
where $C_q > 0$ is a constant depending on the order $q$ of the moment. 
\begin{assumption}
    \label{ass:Restrict-Model}
    For all $i \in \mathcal{Y}$, $b_i^* \in \mathcal{C}^{1}\left(\mathbb{R}\right)$, the functions $b_i^*$ and $b_i^{*\prime}$ are bounded on $\mathbb{R}$. Moreover,
    $\sigma^* \in \mathcal{C}^{2}\left(\mathbb{R}\right)$ such that $\sigma^{*\prime}$ and $\sigma^{*\prime\prime}$ are bounded on $\mathbb{R}$, and 
    $$\forall~ A \in \mathbb{R}\setminus\{0\}, ~~ \exp\left[-\dfrac{1}{2}\left(\int_{0}^{A}{\dfrac{1}{\sigma^*(u)}du}\right)^2\right] \leq \exp\left(-\dfrac{A^2}{2}\right).$$
\end{assumption}
Assumption~\ref{ass:Restrict-Model} is restrictive enough on the choice of the diffusion model to build a classification rule for diffusion paths. However, this assumption is required in the sequel for the establishment of rates of order $N^{-\beta/(2\beta+1)}$ (up to extra factors) for the plug-in classifier (see Section~\ref{sec:upper-bound-plug-in}). 
\begin{assumption}[Novikov's condition]
\label{ass:Novikov}
$$\forall i\in\mathcal{Y}, \ \ \mathbb{E}\left[\exp\left(\frac{1}{2}\int_{0}^{1}{\frac{b^{*2}_{i}}{\sigma^{*2}}(X_s)ds}\right)\right] < \infty.$$
\end{assumption}

Under Assumption~\ref{ass:Novikov} also called Novikov's condition, one can apply the Girsanov Theorem (see \cite{revuz2013continuous}, \textit{Chapter VIII}) on our diffusion model, with the aim of establishing a formula that will be used to compute the Bayes classifier $g^*$ with respect to the parameter of the model, as we prove in the next section.

%%%%%%%%%%%%%%%%%%%%%%%%%%%%%%%%%%%%%%%%%%%%%%%%%%%%%
\subsection{Bayes Classifier}
\label{subsec:BayesClassifier}
%%%%%%%%%%%%%%%%%%%%%%%%%%%%%%%%%%%%%%%%%%%%%%%%%%%%%

The Bayes classifier $g^*$ defined in Equation~\eqref{eq:Bayes} is given from Equation~\eqref{eq:RiskClassi} and for each diffusion process $X$ by
\begin{equation*}
    g^*(X) = \mathds{1}_{\left\{\Phi^*(X) \geq 1/2\right\}}. 
\end{equation*}
From \cite{denis2020classif}, Proposition 2.3, the regression function $\Phi^*$ is given for all diffusion processes $X$ by 
\begin{equation*}
    \Phi^*(X) = \mathbb{P}(Y = 1 | X) = \phi^*(F_0^*(X), F_1^*(X)) ~~ \mathbb{P}-a.s.
\end{equation*}
where $\phi^*$ is the softmax function given for all $(x_0,x_1) \in \mathbb{R}^2$ by,
\begin{equation}\label{eq:softmax-function}
    \phi^*(x_0,x_1) := \dfrac{p_1^*\mathrm{e}^{x_1}}{(1-p_0^*)\mathrm{e}^{x_0} + p_1^*\mathrm{e}^{x_1}},
\end{equation}
and for $i \in \mathcal{Y} = \{0,1\}$, 
\begin{equation*}
    F_i^*(X) := \int_{0}^{1}{\dfrac{b_i^*}{\sigma^{*2}}(X_s)dX_s} - \dfrac{1}{2}\int_{0}^{1}{\dfrac{b_i^{*2}}{\sigma^{*2}}}(X_s)ds. 
\end{equation*}
The above results show that the Bayes classifier $g^*$ depends on the unknown parameters $\mathbf{b}^* = \left(b_0^*, b_1^*\right), ~ \sigma^*$ and $\mathbf{p}^*$. Consequently, $g^*$ is not computable in practice. As a result, we propose from the learning sample $\mathcal{D}_N$, a consistent empirical classification procedure of plug-in type $\widehat{g} = g_{\widehat{\mathbf{b}}, \widehat{\sigma}^2, \widehat{\mathbf{p}}}$, based on the estimators $\widehat{\mathbf{b}}, ~ \widehat{\sigma}^2$ and $\widehat{\mathbf{p}}$ of the respective parameters $\mathbf{b}^* = \left(b_0^*, b_1^*\right), ~ \sigma^*$ and $\mathbf{p}^*$. From now on, consider the following notation:
\begin{equation*}
    \begin{aligned}
        & g^*(X) = g_{\mathbf{f}^*}(X), ~~ F_i^*(X) = F_{\bf{b}^*, \sigma^{*2}}^i(X), ~~ \widehat{F}_i(X) = F_{\widehat{\bf{b}}, \widehat{\sigma}^{2}}^i(X),\\
        & \widehat{g}(X) = g_{\widehat{\mathbf{f}}}(X), ~~ \Phi^*(X) = \Phi_{\mathbf{f}^*}(X), ~~ \widehat{\Phi}(X) = \Phi_{\widehat{\mathbf{f}}}(X),
    \end{aligned}
\end{equation*}
where $\mathbf{f}^* = (\mathbf{b}^*, \sigma^{*2}, \mathbf{p}^*), ~ \widehat{\mathbf{f}} = (\widehat{\mathbf{b}}, \widehat{\sigma}^{2}, \widehat{\mathbf{p}}), ~ \mathbf{b^*} = (b_0^*, b_1^*), ~ \widehat{\mathbf{b}} = (\widehat{b}_0, \widehat{b}_1)$ and $\mathbf{p}^* = (p_0, p_1) \in (0,1)^2$ with $\widehat{\mathbf{p}} = (\widehat{p}_0, \widehat{p}_1)$.
Our objective is to establish minimax convergence rates for $\widehat{g}$ under different sets of assumptions on the diffusion model~\eqref{eq:Diff-Model}.

%%%%%%%%%%%%%%%%%%%%%%%%%%%%%%%%%%%%%%%%%%%%%%%%%%%%%
\subsection{The empirical plug-in type classifier}
\label{subsec:PlugInClassifier}
%%%%%%%%%%%%%%%%%%%%%%%%%%%%%%%%%%%%%%%%%%%%%%%%%%%%%

This section is devoted to the construction of the plug-in-type empirical classifier $\widehat{g}$ that mimics the Bayes classifier $g^*$ described in Section~\ref{subsec:BayesClassifier} . To this end, we suppose to have at our disposal a learning sample 
$$\mathcal{D}_N = \left\{\left(\bar{X}^j, Y_j\right), ~~ j = 1, \ldots, N\right\}$$
composed of $N$ independent observations of the random pair $(X,Y)$, where the $\bar{X}^j = (X_{k\Delta_n}^j)_{0 \leq k \leq n}$ are observations of the diffusion process $X$ at discrete times with time step $\Delta_n = 1/n$. We suppose that the diffusion paths $\bar{X}^j, ~ j = 1, \ldots, N$ are high-frequency observations of the diffusion process $X$, that is, the time step $\Delta_n$ tends to zero, which is equivalent to $n$ tends to infinity. From the learning sample $\mathcal{D}_N$, the distribution $\mathbf{p}^* = (p_0^*, p_1^*)$ of the label $Y$ is estimated by $\widehat{\mathbf{p}} = (\widehat{p}_0, \widehat{p}_1)$, with 
\begin{equation*}
    \widehat{p}_0 = \dfrac{1}{N}\sum_{j=1}^{N}{\mathds{1}_{Y_j = 0}} ~~ \mathrm{and} ~~ \widehat{p}_1 = \dfrac{1}{N}\sum_{j=1}^{N}{\mathds{1}_{Y_j = 1}}.
\end{equation*}
We then deduce an estimator of the softmax function $\phi^*$ as follows:
\begin{equation*}
    \widehat{\phi}(x_0,x_1) = \dfrac{\widehat{p_1}\mathrm{e}^{x_1}}{\widehat{p}_0\mathrm{e}^{x_0}+\widehat{p}_1\mathrm{e}^{x_1}}.
\end{equation*}
Finally, let $\widehat{\mathbf{b}} = \left(\widehat{b}_0, \widehat{b}_1\right), ~ \widehat{\sigma}^2$ be the respective nonparametric estimators of the functions $\mathbf{b}^* = \left(b_0^*, b_1^*\right)$ and $\sigma^{*2}$. The empirical classifier $\widehat{g}$ is defined for any diffusion path $\bar{X} = (X_{k\Delta_n})_{0 \leq k\leq n}$ by $\widehat{g}(\bar{X}) = g_{\widehat{\mathbf{f}}}(\bar{X}) = \mathds{1}_{\Phi_{\widehat{\mathbf{f}}}(\bar{X}) \geq 1/2}$, where
\begin{equation*}
    \Phi_{\widehat{\mathbf{f}}}(\bar{X}) = \widehat{\phi}(F_{\widehat{\mathbf{b}}, \widehat{\sigma}^2}^0(\bar{X}), F_{\widehat{\mathbf{b}}, \widehat{\sigma}^2}^1(\bar{X})),
\end{equation*}
and for each $i$ in $\mathcal{Y}$, we have 
\begin{equation*}
    F_{\widehat{\mathbf{b}}, \widehat{\sigma}^2}^i(\bar{X}) := \sum_{k=0}^{n-1}{\dfrac{\widehat{b}_i}{\widehat{\sigma}^2}(X_{k\Delta_n})(X_{(k+1)\Delta_n} - X_{k\Delta_n})} - \dfrac{\Delta_n}{2}\sum_{k=0}^{n-1}{\dfrac{\widehat{b}_i^2}{\widehat{\sigma}^2}(X_{k\Delta_n})}.
\end{equation*}
In the next section, we focus on the establishment of upper bounds on the excess risk of the empirical classifier $\widehat{g}$.

%%%%%%%%%%%%%%%%%%%%%%%%%%%%%%%%%%%%%%%%%%%%%%%%%%%%%%
%%%%%%%%%%%%%%%%%%%%%%%%%%%%%%%%%%%%%%%%%%%%%%%%%%%%%%
\section{Minimax rate of the excess risk of the empirical plug-in classifier}
\label{sec:upper-bound-plug-in}
%%%%%%%%%%%%%%%%%%%%%%%%%%%%%%%%%%%%%%%%%%%%%%%%%%%%%%
%%%%%%%%%%%%%%%%%%%%%%%%%%%%%%%%%%%%%%%%%%%%%%%%%%%%%%

The empirical classifier $\widehat{g}$ has been shown to be consistent in \cite{denis2024nonparametric} with a convergence rate of order $N^{-1/5}$ in the general case where coefficients $b_0^*, ~ b_1^*, ~ \sigma^*$ are assumed to be unknown. They showed that the excess risk of $\widehat{g}$ reaches a rate of order $\exp\left(\sqrt{c\log(N)}\right)N^{-\beta/(2\beta+1)}, ~ c>0$, where the diffusion coefficient $\sigma^*$ is known and $\sigma^* = 1$. In this section, we study minimax rates for the excess risk of $\widehat{g}$ where the coefficients $b_0^*, ~ b_1^*$ are $\sigma^*$ are unknown. 

In Section~\ref{subsec:estimators}, we study the risk bounds of projection estimators of the drift coefficients $b_0^*$ and $b_1^*$. Section~\ref{subsec:rate-plug-in} focuses on the minimax convergence rates for the excess risk of the plug-in classifier $\widehat{g}$ for diffusion models with bounded drift coefficients. Section~\ref{subsec:unboundedDrift} tackles the case of diffusion models with unbounded drift coefficients.

%%%%%%%%%%%%%%%%%%%%%%%%%%%%%%%%%%%%%%%%%%%%%%%%%%%%%%%%%%%
\subsection{Nonparametric estimators of the drift coefficients}
\label{subsec:estimators}
%%%%%%%%%%%%%%%%%%%%%%%%%%%%%%%%%%%%%%%%%%%%%%%%%%%%%%%%%%%

We propose, for $i \in \mathcal{Y}$, a nonparametric estimator of the drift function $b_i^*$ that comes from the following stochastic differential equation
\begin{equation*}
    dX_t = b_i^*(X_t)dt + \sigma^*(X_t)dW_t, ~~ X_0 = 0,
\end{equation*}
where $b_i^*$ and $\sigma^*$ satisfy Assumptions~\ref{ass:Reg} and \ref{ass:Ell} and the diffusion coefficient $\sigma^*$ is assumed to be unknown. Consider the sample paths $\left\{\bar{X}^{j,i}, ~ j \in \mathcal{J}_i\right\}$ extracted from the learning sub-sample 
$$\mathcal{D}_N^i = \left\{(\bar{X}^{j,i}, i), ~ j \in \mathcal{I}_i\right\} \subset \mathcal{D}_N$$ 
of random size $N_i := \frac{1}{N}\sum_{j=1}^{N}{\mathds{1}_{Y_j = i}}$ such that $N_i \sim \mathcal{B}(N, p_i^*), ~ \mathrm{and} ~ N_i \underset{N \rightarrow \infty}{\longrightarrow} \infty ~~ a.s.$ The drift coefficient $b_i^*$ is solution of the following regression model
\begin{equation*}
    Z_{k\Delta_n}^{j,i} = b_i^*(X_{k\Delta_n}^{j,i}) + \xi_{k\Delta_n}^{j,i}, ~~ j = 1, \ldots, N_i,
\end{equation*}
where for each $j \in \{1, \ldots, N_i\}$,
\begin{equation}\label{eq:Response-Z}
    Z_{k\Delta_n}^{j,i} = \dfrac{X_{(k+1)\Delta_n}^{j,i} - X_{k\Delta_n}^{j,i}}{\Delta_n}, ~~ k = 0, \ldots, n-1
\end{equation}
is the response variable, and 
\begin{equation*}
    \xi_{k\Delta_n}^{j,i} = \int_{k\Delta_n}^{(k+1)\Delta_n}{(b_i^{*}(X_s^{j,i}) - b_i^{*}(X_{k\Delta}^{j,i}))ds} + \int_{k\Delta_n}^{(k+1)\Delta_n}{\sigma^{*}(X_s^{j,i})dW_s^{j,i}}, ~~ k = 0, \ldots, n-1
\end{equation*}
is the error term.

\subsubsection{Spaces of approximation}

Consider a class $i \in \mathcal{Y}$, and let $K_{N_i} \geq 1$ and $M\geq 1$ be two integers, with $K_{N_i}$ depending on the random size $N_i$ of the sample paths $\left\{\bar{X}^{j,i}, ~ j \in \mathcal{J}_i\right\}$ and such that $K_{N_i}$ tends almost surely to infinity, as $N$ tends to infinity. Consider a compact interval $I_i$ such that $\mathrm{int}(I_i) \neq \emptyset$, and a vector $\mathbf{u} = \left(u_{-M}, \ldots, u_{K_i+M}\right)$ of knots of the compact interval $I_i$ such that
\begin{align*}
    &~ u_{-M} = \ldots = u_0 = \min I_i ~~ \mathrm{and} ~~ u_{K_{N_i}} = \ldots = u_{K_{N_i}+M} = \max I_i \\
    &~ \forall ~ k \in [\![0,K_{N_i}]\!], ~~ u_k = - \min I_i + k\dfrac{|I_i|}{K_{N_i}}.
\end{align*}
\textbf{B}-spline functions of degree $M \geq 1$ are positive piecewise polynomials $\left(B_{\ell}\right)_{\ell \in [\![-M, K_{N_i}-1]\!]}$ with compact supports (see, e.g., \cite{gyorfi2006distribution}, \textit{Chapter 14, p.252}), and satisfying,  .
\begin{equation*}
    \sum_{\ell = -M}^{K_{N_i}-1}{B_{\ell}(x)} = 1, ~~ x \in I_i.
\end{equation*}
We consider the finite-dimensional space $\mathcal{S}_{K_{N_i}}$ spanned by the \textbf{B}-spline basis $\left(B_{\ell}\right)_{\ell \in [\![-M, K_{N_i}-1]\!]}$, that is,
\begin{equation*}
    \mathcal{S}_{K_{N_i}} = \left\{h = \sum_{\ell = -M}^{K_{N_i}-1}{a_{\ell}B_{\ell}}, ~~ \mathbf{a} = (a_{-M}, \ldots, a_{K_{N_i}-1}) \in \mathbb{R}^{K_{N_i}+M}\right\}, ~~ i \in \mathcal{Y}.
\end{equation*}
Then, any element $h$ of the space $\mathcal{S}_{K_{N_i}}$ is a $M-1$ continuously differentiable function. For more details on the \textbf{B}-spline basis, we refer the reader to \cite{gyorfi2006distribution}, \textit{Chapter 14}. 

We propose a projection estimator for the drift function $b_i^*$ in the vector space $\mathcal{S}_{K_{N_i}}$ of dimension $K_{N_i}+M$. Moreover, we add a constraint of $\ell^{2}-$ type to the coordinate vectors of functions $h$ to control their supremum norm. More precisely, we consider the following constraint sub-space
\begin{equation*}
    \mathcal{S}_{K_{N_i}, I_{i}} = \left\{h = \sum_{\ell = -M}^{K_{N_i}-1}{a_{\ell}B_{\ell}} \in \mathcal{S}_{K_{N_i}}, ~ \|\mathbf{a}\|_{2}^2 = \sum_{\ell = -M}^{K_{N_i}-1}{a_{\ell}^2} \leq (K_{N_i}+M)|I_i|^2\log(N_i)\right\}, ~~ i \in \mathcal{Y}.
\end{equation*}
In this paper, we assume that the coefficients $b^{*}_{i}, ~ i \in \mathcal{Y}$ and $\sigma^{*2}$ belong to the H\"older space $\Sigma(\beta, R)$ of smoothness parameter $\beta \geq 1$, with $R > 0$, defined as follows:
\begin{equation*}
    \Sigma(\beta, R) := \left\{f \in \mathcal{C}^{d}(\mathbb{R}, \mathbb{R}), ~ \left|f^{(d)}(x) - f^{(d)}(y)\right| \leq R|x - y|^{\beta - d}, ~~ x,y \in \mathbb{R}\right\},
\end{equation*}
where $d = \lfloor \beta \rfloor$ is the highest integer strictly smaller than $\beta$. The approximation of the H\"older space $\Sigma(\beta, R)$ by the finite dimensional sub-space $\mathcal{S}_{K_{N_i}, I_{i}}$ induces a bias term whose upper-bound is established in \cite{denis2021ridge}, \textit{Lemma D.2} as follows:
\begin{equation}\label{eq:Approx-Error}
    \underset{h\in\mathcal{S}_{K_{N_i}, I_{i}}}{\inf}{\|h-b^{*}_{I_{i}}\|^{2}_{n,i}}\leq C\left(\frac{|I_i|}{K_{N_i}}\right)^{2\beta}, ~~ i \in \mathcal{Y},
\end{equation}
where $C>0$ is a constant independent of $N$ et $n$, and $b^{*}_{I_{i}} = b^{*}_{i}\mathds{1}_{I_i}$ is the restriction of the drift function $b_i^*$ on the compact interval $I_i$. Note that $I_i = \mathrm{Supp}(b_i^*)$ when the drift functions $b_i^*$ are compactly supported, otherwise, $I_i = [-A_{N_i}, A_{N_i}]$ where $A_{N_i}$ is strictly positive and tends to infinity almost surely as $N$ tends to infinity. Moreover, we consider the following subspaces of the H\"older space $\Sigma(\beta, R)$:
\begin{equation*}
    \begin{aligned}
        \Sigma_{\mathrm{int}}(\beta, R) := \left\{f \in \Sigma(\beta, R) : ~ \int_{\mathbb{R}}f(x)dx < \infty\right\}, ~~ \Sigma_c(\beta, R) := \left\{f \in \Sigma(\beta, R) : \mathrm{Supp}(f) ~ \mathrm{is} ~ \mathrm{compact}\right\}.
    \end{aligned}
\end{equation*}
We then consider the following spaces for the model parameter $\mathbf{f}^* = (\mathbf{b}^*, \sigma^{*2}, \mathbf{p}^*)$:
\begin{equation*}
    \begin{aligned}
        &\mathbf{F}(\beta, R) := \Sigma(\beta, R)^2 \times \Sigma(\beta, R) \times (0,1)^2, ~~ \mathbf{F}_{\mathrm{int}}(\beta, R) := \Sigma_{\mathrm{int}}(\beta, R)^2 \times \Sigma(\beta, R) \times (0,1)^2\\
        &\mathbf{F}_{c}(\beta, R) := \Sigma_c(\beta, R)^2 \times \{1\} \times (0,1)^2,
    \end{aligned}
\end{equation*}
and we have $\mathbf{F}_{c}(\beta, R) \subset \mathbf{F}(\beta, R)$ and $\mathbf{F}_{\mathrm{int}}(\beta, R) \subset \mathbf{F}(\beta, R)$. Finally, the set of classifiers $\mathcal{G}$ is given by
\begin{equation*}
    \mathcal{G} := \left\{g_{\mathbf{f}^*}: X \mapsto \mathds{1}_{\left\{\Phi_{\mathbf{f}^*}(X) \geq 1/2\right\}} : ~ \mathbf{f}^* \in \mathbf{F}(\beta, R)\right\}.
\end{equation*}
We establish finer bounds for the excess risk of the plug-in classifier $\widehat{g}$ when the drift function is assumed to belong to each of the spaces $\Sigma_{\mathrm{int}}(\beta, R)$ and $\Sigma_{c}(\beta, R)$. In particular, we show that the empirical classifier $\widehat{g}$ reaches a minimax optimal rate when the drift coefficients belong to $\Sigma_c(\beta, R)$ and $\sigma^* = 1$.

\subsubsection{Minimum contrast estimators of the drift coefficients}
\label{subsec:Contrast-Estimator}

We consider the contrast function $\gamma_{N_i,n}$ defined for any function $h$ by
\begin{equation*}
    \gamma_{N_i,n}(h) := \dfrac{\mathds{1}_{N_i>1}}{N_i,n}\sum_{j=1}^{N_i}{\sum_{k=0}^{n-1}{\left(Z_{k\Delta_n}^{j,i} - h(X_{k\Delta_n}^{j,i})\right)^2}}.
\end{equation*}
The projection estimator $\widehat{b}_i$ of $b_i^*$ on the sub-space $\mathcal{S}_{K_{N_i}, I_i}$ is given by
\begin{equation*}
    \widehat{b}_i = \underset{h \in \mathcal{S}_{K_{N_i}, I_{i}}}{\arg\min}{\gamma_{N_i,n}(h)} ~~ \mathrm{if} ~~ N_i > 1, ~~~ \mathrm{and} ~~~ \widehat{b}_i = 0 ~~ \mathrm{if} ~~ N_i \leq 1. 
\end{equation*}
More precisely, conditional on the event $\{N_i > 1\}$, $\widehat{b}_i$ is given by 
$$\widehat{b}_i = \left<\widehat{\mathbf{a}}, \left(B_{-M}, \ldots, B_{K_{N_i}-1}\right)^{\prime}\right> = \sum_{\ell = -M}^{K_{N_i}-1}\widehat{a}_iB_{\ell},$$ 
where the coordinate vector $\widehat{\mathbf{a}} = (\widehat{a}_{-M}, \ldots, \widehat{a}_{K_{N_i}})$ is given by 
\begin{equation*}
    \widehat{\mathbf{a}} := \underset{\left\|\mathbf{a}\right\|_2^2 \leq (K_{N_i} + M)|I_i|^2\log(N_i)}{\arg\min}\left\|\mathbf{Z} - \Phi_{K_{N_i}}\mathbf{a}\right\|_2^2,
\end{equation*}
with
$$\mathbf{Z} := \left(Z_{0\Delta_n}^1, \ldots, Z_{(n-1)\Delta_n}^1, \ldots, Z_{0\Delta_n}^{N_i}, \ldots, Z_{(n-1)\Delta_n}^{N_i}\right)^{\prime} \in \mathbb{R}^{N_in}$$ and 
$$\Phi_{K_{N_i}} := \left(\left(B_{\ell}(X_{0\Delta_n}^j), \ldots, B_{\ell}(X_{(n-1)\Delta_n}^j)\right)^{\prime}\right)_{-M \leq \ell \leq K_{N_i}-1, 1\leq j \leq N_i} \in \mathbb{R}^{N_in \times (K_{N_i}+M)}.$$
An explicit formula for $\widehat{\mathbf{a}}$ is provided in \cite{denis2021ridge}. 
Consider the Gram matrix $\mathbf{\Psi}_{K_{N_i}}$ of the \textbf{B-spline} basis given by
\begin{equation*}
    \mathbf{\Psi}_{K_{N_i}} = \left(\mathbb{E}_X\left[\dfrac{1}{n}\sum_{k=0}^{n-1}{B_{\ell}(X_{k\Delta})B_{\ell^{\prime}}(X_{k\Delta})}\right]\right)_{-M \leq \ell, \ell^{\prime}, K_{N_i}-1}.
\end{equation*}
For each $i \in \mathcal{Y}$, and on the event $\{N_i > 1\}$, the Gram matrix $\mathbf{\Psi}_{K_{N_i}}$ is invertible (see \cite{denis2024nonparametric}, \textit{Lemma 1}). 

\subsubsection{Risk bounds of drift projection estimators}
\label{subsec:Minimax-Rate-Drift}

In this section, we study upper bounds on the estimation risk of $\widehat{b}_i, ~ i \in \mathcal{Y}$ under some sets of assumptions on the coefficients $b_i^*$ and $\sigma^*$. The goal is to extend the result established in \cite{denis2024nonparametric}, \textit{Theorem 5} to the case of diffusion models with unknown $\sigma^*$. We also improve this result under an additional assumption of the drift coefficient. Finally, we improve the upper bound derived in \cite{denis2021ridge}, \textit{Theorem 4.5} when the drift coefficients are compactly supported. We obtain the following result.

\begin{theo}\label{thm:upper-bound-drift}
    Suppose that $\Delta_n = \mathrm{O}\left(N^{-1}\right)$. For each label $i \in \mathcal{Y}$, and on the event $\{N_i > 1\}$, set $A_{N_i} = \sqrt{\frac{2\beta}{2\beta+1}\log(N_i)}$ and $K_{N_i} = N_i^{1/(2\beta+1)}\log^{-5/2}(N_i)$. Under Assumptions~\ref{ass:Reg}, \ref{ass:Ell} and \ref{ass:Restrict-Model}, there exist constants $C,c>0$ such that
    \begin{equation*}
        \underset{b_i^* \in \Sigma(\beta, R)}{\sup}{~\mathbb{E}\left[\left\|\widehat{b}_i - b_i^*\right\|_{n,i}\right]} \leq C\exp\left(c\sqrt{\log(N)}\right)N^{-\beta/(2\beta+1)}.
    \end{equation*}
    If $b_i^*$ is integrable on $\mathbb{R}$, then, under Assumptions~\ref{ass:Reg}, \ref{ass:Ell} and \ref{ass:Restrict-Model}, there exists a constant $C>0$ such that
    \begin{equation*}
        \underset{b_i^* \in \Sigma_{\mathrm{int}}(\beta, R)}{\sup}{~\mathbb{E}\left[\left\|\widehat{b}_i - b_i^*\right\|_{n,i}\right]} \leq C\log^{3\beta}(N)N^{-\beta/(2\beta+1)}.
    \end{equation*}
    If $b_i^*$ is compactly supported and $\sigma^* = 1$, then, under Assumption~\ref{ass:Reg}, there exists a constant $c>0$ such that
    \begin{equation*}
       \underset{b_i^* \in \Sigma_c(\beta, R)}{\sup}{~\mathbb{E}\left[\left\|\widehat{b}_i - b_i^*\right\|_{n,i}\right]} \leq CN^{-\beta/(2\beta+1)}.
    \end{equation*}
\end{theo}
Theorem~\ref{thm:upper-bound-drift} provides different risk bounds on the estimation risk of each drift coefficient $b_i^*$ under different assumptions on the diffusion model. Thus, under Assumptions~\ref{ass:Reg}, \ref{ass:Ell} and \ref{ass:Restrict-Model}, we have an upper bound of order $\exp\left(c\sqrt{\log(N)}\right)N^{-\beta/(2\beta+1)}$ over the space of H\"older functions of smoothness parameter $\beta \geq 1$. recall that for all $\delta > 0$, the extra factor satisfies
\begin{equation*}
    \exp\left(c\sqrt{\log(N)}\right) = o(N^{\delta}), ~~ \mathrm{as} ~~ N \rightarrow \infty.
\end{equation*}
In \cite{denis2024nonparametric}, an upper bound of order $\log^{6\beta}(N)N^{-\beta/(2\beta+1)}$ has been established when $b_i^*$ is bounded and $\sigma^* = 1$. Theorem~\ref{thm:upper-bound-drift} provides a rate almost of the same order in a more general case with an unknown $\sigma^*$, and under stronger assumptions on the diffusion model leading to the establishment of an upper bound of 
\begin{equation*}
    \underset{t \in [0,1]}{\sup}{~\mathbb{P}(|X_t| > A_N)}
\end{equation*}
which is of the same order as the obtained rate. An additional assumption of the integrability of $b_i^*$ on the real line improves the risk bound since the extra factor $\exp\left(c\sqrt{\log(N)}\right)$ is replaced by a logarithmic factor. Note that the construction of drift estimators does not require a dimension truncation, as in \cite{comte2020nonparametric}. Indeed, the condition on the Gram matrix assumed in \cite{comte2020nonparametric}, which is essential to establish the optimal rate, has been proved in \cite{ella2025minimax} for the \textbf{B}-spline basis and the Fourier basis, and we reuse this result in the present paper. Finally, considering only compactly supported drift coefficients that satisfy Assumption~\ref{ass:Reg}, We obtain the optimal rate of order $N^{-\beta/(2\beta+1)}$, which constitutes a clear improvement over the risk bound established in \cite{denis2021ridge}, \textit{Theorem 4.5}. The logarithmic factor in their result arises from an upper bound on $\mathbb{P}^{(i)}\left(\Omega_{n,N_i,K_{N_i}}^c\right)$ that is not sufficiently sharp, where $\Omega_{n,N_i,K_{N_i}}$ is a random event in which the empirical norm $\|.\|_{n,i}$ and the pseudo-norm $\|.\|_{n,N_i,K_{N_i}}$ are equivalent in the approximation space $\mathcal{S}_{K_{N_i}, I_i}$. In the present paper, the corresponding bound decays faster than any polynomial rate (see Section~\ref{subsec:ProofTheorem3.1}). In the next section, we derive the corresponding upper bounds on the excess risk of the plug-in classifier. 

%%%%%%%%%%%%%%%%%%%%%%%%%%%%%%%%%%%%%%%%%%%%%%%%%%%%%%%%%%%%%%%%%%%%%%%%%
\subsection{Minimax rates for the plug-in classifier}
\label{subsec:rate-plug-in}
%%%%%%%%%%%%%%%%%%%%%%%%%%%%%%%%%%%%%%%%%%%%%%%%%%%%%%%%%%%%%%%%%%%%%%%%%

We address the study of convergence rates of the excess risk of the plug-in classifier $\widehat{g}$ built from the respective estimators $\widehat{\bf b} = \left(\widehat{b}_0, \widehat{b}_1\right)$, $\widehat{\sigma}^2$ and $\widehat{\bf p}$ of ${\bf b}^* = \left(b_0^*, b_1^*\right)$, $\sigma^{*2}$ and $\mathbf{p}^* = \left(p_0^*, p_1^*\right)$ respectively. We consider the truncated estimator $\w{\sigma}^2$ of $\sigma^{*2}$ proposed in \cite{denis2024nonparametric} and \cite{ella2024nonparametric}, and given by
\begin{equation*}
    \w{\sigma}^2(x) := \dfrac{1}{\log(N)}\mathds{1}_{\widehat{\sigma}^2(x) \leq 1/\log(N)} + \widehat{\sigma}^2(x)\mathds{1}_{1/\log(N) \leq \widehat{\sigma}^2(x) \leq \log(N)} + \log(N)\mathds{1}_{\widehat{\sigma}^2(x) > \log(N)},~~ x \in \mathbb{R},
\end{equation*}
where the projection estimator $\widehat{\sigma}^2$ satisfies
\begin{equation*}
    \widehat{\sigma}^2 \in \underset{h \in \mathcal{S}_{\w{K}_N, \widetilde{I}}}{\arg\min}{~\w{\gamma}_{N,n}(h)} ~~ \mathrm{with} ~~ \widetilde{I} = [-\widetilde{A}_N, \widetilde{A}_N],
\end{equation*}
with $\w{\gamma}_{N,n}$, the minimum contrast function defined by
\begin{equation*}
    \w{\gamma}_{N,n}(h) := \dfrac{1}{Nn}\sum_{j=1}^{N}{\sum_{k=0}^{n-1}{\left(U_{k\Delta_n}^j - h(X_{k\Delta_n}^j)\right)^2}}, 
\end{equation*}
and
\begin{equation}\label{eq:Response-U}
    U_{k\Delta_n}^j := \dfrac{\left(X_{(k+1)\Delta_n}^j - X_{k\Delta_n}^j\right)^2}{\Delta_n}, ~~ j = 1, \ldots, N, ~~ k = 0, \ldots, n-1.
\end{equation}
Then we have $\widehat{\sigma}^2 = \left<\widehat{\mathbf{a}}, \left(B_{-M}, \ldots, B_{\widetilde{K}_N-1}\right)^{\prime}\right> = \sum_{\ell = -M}^{\widetilde{K}_N-1}\widehat{a}_iB_{\ell}$, where $\widehat{\mathbf{a}} = \left(a_{-M}, \ldots, a_{\widetilde{K}_N-1}\right)$ satisfies
\begin{equation*}
    \widehat{\mathbf{a}} := \underset{\left\|\mathbf{a}\right\|_2^2 \leq (\widetilde{K}_{N} + M)|I|^2\log(N)}{\arg\min}\left\|\mathbf{U} - \Phi_{\widetilde{K}_{N}}\mathbf{a}\right\|_2^2,
\end{equation*}
with $\mathbf{U} := \left(U_{0\Delta_n}^1, \ldots, U_{(n-1)\Delta_n}^1, \ldots, U_{0\Delta_n}^{N_i}, \ldots, U_{(n-1)\Delta_n}^{N_i}\right)^{\prime} \in \mathbb{R}^{N_in}$. We also consider the truncated estimator $\w{\bf b} = \left(\w{b}_0, \w{b}_1\right)$, where for each $i \in \mathcal{Y}$, 
\begin{equation}\label{eq:truncdrift}
    \w{b}_i(x) := \widehat{b}_i(x)\mathds{1}_{|\widehat{b}_i(x)| \leq \log(N)} + \log(N)\dfrac{\widehat{b}_i(x)}{|\widehat{b}_i(x)|}\mathds{1}_{|\widehat{b}_i(x)| > \log(N)}.
\end{equation}
Then, we obtain $\widehat{g} = g_{\w{\bf b}, \w{\sigma}^2, \widehat{\bf p}}$. We derive the following result.
\begin{theo}\label{thm:Minimax-PlugIn}
    For each label $i \in \mathcal{Y}$ and on the event $\{N_i > 1\}$, set $A_{N_i} = \sqrt{\frac{2\beta}{2\beta + 1}\log(N_i)}$, $K_{N_i} = N_i^{1/(2\beta+1)}\log^{-5/2}(N_i)$, $\w{A}_N = \sqrt{\frac{3\beta}{2\beta+1}\log(N)}, ~ \w{K}_N = N^{2/(2\beta+1)}\log^{-5/2}(N)$ and $\Delta_n = \mathrm{O}(N^{-1})$. Under Assumptions~\ref{ass:Reg}, \ref{ass:Ell}, \ref{ass:Restrict-Model} and \ref{ass:Novikov}, there exist constants $C, c_1>0$ such that
    \begin{equation*}
        \underset{\mathbf{f}^* \in \mathbf{F}(\beta, R)}{\sup}{~\mathbb{E}\left[\mathcal{R}(g_{\widehat{\mathbf{f}}}) - \mathcal{R}(g_{\mathbf{f}^*})\right]} \leq C\exp\left(c_1\sqrt{\log(N)}\right)N^{-\beta/(2\beta+1)}.
    \end{equation*}
    If the drift coefficients $b_0^*$ and $b_1^*$ are integrable on $\mathbb{R}$, then under Assumptions~\ref{ass:Reg}, \ref{ass:Ell}, \ref{ass:Restrict-Model} and \ref{ass:Novikov}, there exists a constant $C>0$ such that
    \begin{equation*}
        \underset{\mathbf{f}^* \in \mathbf{F}_{\mathrm{int}}(\beta, R)}{\sup}{~\mathbb{E}\left[\mathcal{R}(g_{\widehat{\mathbf{f}}}) - \mathcal{R}(g_{\mathbf{f}^*})\right]} \leq C\log^{3\beta+2}(N)N^{-\beta/(2\beta+1)}.
    \end{equation*}
    If the drift coefficients $b_0^*$ and $b_1^*$ are compactly supported and twice continuously differentiable, and $\sigma^* = 1$, then under Assumptions~\ref{ass:Reg}, there exists a constant $C>0$ such that
    \begin{equation*}
        \underset{\mathbf{f}^* \in \mathbf{F}_{c}(\beta, R)}{\sup}{~\mathbb{E}\left[\mathcal{R}(g_{\widehat{\mathbf{f}}}) - \mathcal{R}(g_{\mathbf{f}^*})\right]} \leq CN^{-\beta/(2\beta+1)}.
    \end{equation*}
\end{theo}
The upper bound obtained under Assumption~\ref{ass:Reg}, \ref{ass:Ell} and \ref{ass:Restrict-Model} is of the same order as the rate established in \cite{denis2024nonparametric} in the context where the diffusion coefficient $\sigma^*$ is assumed to be known and $\sigma^*=1$. Theorem~\ref{thm:Minimax-PlugIn} extends this result to the case of unknown diffusion coefficients. The establishment of the upper bound has required strong assumptions on both drift and diffusion coefficients that must be sufficiently regular, with the diffusion coefficients taking values in the interval $(0, 1]$. Assuming that the drift coefficients are integrable on $\mathbb{R}$, the corresponding risk bound is slightly sharper than that derived in \cite{denis2024nonparametric}. If we restrict ourselves to compactly supported drift functions, the extra logarithmic factor disappears, and the resulting rate $N^{-\beta/(2\beta+1)}$ is optimal (see Theorem 3.3). 

Note that the upper bounds obtained for the worst excess risk of $\widehat{g}$ can be directly extended to the nonparametric procedure for multiclass classification of diffusion paths studied in \cite{denis2024nonparametric}. Moreover, when $\sigma^* = 1$, the optimal rate of order $N^{-\beta/(2\beta+1)}$ can be reached if the drift functions are estimated using the method proposed in \cite{comte2020nonparametric} based on the truncation of the dimension of the approximation spaces, and the study of the upper bound of the excess risk of $\widehat{g}$ follows the proof method used for the case of compactly supported drift coefficients (see Section~\ref{subsec:ProofTheorem3.2}). The next theorem provides a lower bound of the excess risk of the plug-in classifier showing that the rate of order $N^{-\beta/(2\beta+1)}$ is optimal.
\begin{theo}\label{thm:Minimax-PlugIn2}
    Under Assumptions~\ref{ass:Reg}, \ref{ass:Ell} and \ref{ass:Novikov}, there exists a constant $c > 0$ such that
    \begin{equation*}
       \underset{\widehat{\mathbf{f}}}{\inf}{~\underset{\mathbf{f}^* \in \mathbf{F}(\beta, R)}{\sup}{~\mathbb{E}\left[\mathcal{R}(g_{\widehat{\mathbf{f}}}) - \mathcal{R}(g_{\mathbf{f}^*})\right]} } \geq cN^{-\beta/(2\beta+1)}.
    \end{equation*}
\end{theo}
Theorem~\ref{thm:Minimax-PlugIn2} provides a lower bound of the worst excess risk of the plug-in classifier $\widehat{g} = g_{\widehat{\mathbf{f}}}$, leading, particularly, to an optimal convergence rate of order $N^{-\beta/(2\beta+1)}$ for diffusion models whose drift coefficients belong to space $\Sigma_c(\beta, R)$ and satisfy Assumption~\ref{ass:Reg}. A similar result has been established in \cite{gadat2020optimal} in the context of binary classification of trajectories generated by Gaussian processes, solution of the following white noise diffusion model
\begin{equation*}
    dX_t = f_Y(t)dt + dW_t, ~~ t \in [0,1].
\end{equation*}
In this case, $X$ is a Gaussian process whose transition density is known. In addition, the drift functions $f_0$ and $f_1$ have a compact support, which makes it a simple model. However, in our diffusion model, the coefficients $b_Y^*$ and $\sigma^*$ of the unbound diffusion process $X$ are space dependent. We then had to deal with two major issues such as nonparametric estimation of $b_i^*$ and $\sigma^*$ on $\mathbb{R}$ and the manipulation of non-standard empirical pseudo-norms through the use of the local time of the diffusion process $X$ solution of Equation~\eqref{eq:Diff-Model}.

The next section extends the present study to diffusion models with unbounded drift coefficients.

\subsection{Case of diffusion models with unbounded drift functions}
\label{subsec:unboundedDrift}

In this section, our aim is to extend the result of Section~\ref{subsec:rate-plug-in} to the case of unbounded drift functions. To this end, we assume the diffusion coefficient to be known and $\sigma^* = 1$. Moreover, we need to establish an equivalence relation between the norms $\|.\|_{n, i}, ~ i \in \mathcal{Y}$ similar to the result of \textit{Proposition 4} in \cite{denis2024nonparametric}. We prove that the result of this proposition still holds for unbounded drift functions satisfying the following assumption. 

\begin{assumption}\label{ass:Reg2}
 The functions $b_0^*$ and $b_1^*$ are unbounded, and there exists a constant $C^* > 1$ such that
 \begin{equation*}
     \forall ~ x \in \mathbb{R}, ~~ |(b_0^* - b_1^*)(x)| \leq C^*.
 \end{equation*}
\end{assumption}
The above assumption states that there exist bounded functions $f_0^*$ and $f_1^*$, and an unbounded function $\psi^*$ such that $f_0^* \neq f_1^*$ and 
 \begin{equation*}
     b_0^* = f_0^* + \psi^*, ~~ \mathrm{and} ~~ b_1^* = f_1^* + \psi^*.
 \end{equation*}
Consider, for example, the Ornstein-Uhlenbeck diffusion model given by
\begin{equation*}
    dX_t = -(X_t - \mu_Y^*)dt + \sigma^* dW_t, ~~ X_0 = x_0,
\end{equation*}
where $\mu_Y^*$ is a constant depending on the label $Y \in \mathcal{Y} = \{0,1\}$. In this case, we have $f_0^*(x) = \mu_0^*, ~ f_1^*(x) = \mu_1^*$ and $\psi^*(x) = -x$, for all $x \in \mathbb{R}$. Thus, the functions $b_0^*$ and $b_1^*$ are unbounded and satisfy
\begin{equation*}
    b_0^* - b_1^* = \mu_0^* - \mu_1^*.
\end{equation*}
We establish the following result.
\begin{prop}\label{prop:ChangeProba}
    Suppose that $\sigma^* = 1$. Under Assumptions~\ref{ass:Reg}, \ref{ass:Ell}, \ref{ass:Novikov} and \ref{ass:Reg2}, and for all $i,j \in \mathcal{Y}$ such that $i \neq j$, there exist constants $C,c>0$ such that
    \begin{equation*}
        \left\|\w{b}_i - b_i^*\right\|_{n,j}^2 \leq C\exp\left(\sqrt{c\log(N)}\right)\left\|\w{b}_i - b_i^*\right\|_{n,i}^2 + C\exp\left(\sqrt{c\log(N)}\right)N^{-1}.
    \end{equation*}
\end{prop}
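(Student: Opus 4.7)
The plan is to compare $\mathbb{E}_{X|Y=j}[h^{2}(X_{t})]$ with $\mathbb{E}_{X|Y=i}[h^{2}(X_{t})]$ at each grid time $t=k\Delta_{n}$ for $h:=\widetilde{b}_{i}-b_{i}^{*}$, and then average over $k=0,\ldots,n-1$. Set $A_{N}=c_{0}\sqrt{\log N}$ with $c_{0}$ large enough. Since $|\widetilde{b}_{i}|\leq\log N$ by construction and $|b_{i}^{*}(x)|\leq C(1+|x|)$ by the Lipschitz hypothesis (Assumption~\ref{ass:Reg}), one has $h^{2}(x)\leq C(\log^{2}N+x^{2})$. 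Split the expectation at $|X_{t}|=A_{N}$: the upper Gaussian estimate in \eqref{eq:Transition-density} for $\Gamma_{j,X}$ together with the moments in \eqref{eq:Moments} yields
\begin{equation*}
\mathbb{E}_{X|Y=j}\!\left[h^{2}(X_{t})\mathds{1}_{|X_{t}|>A_{N}}\right]\leq C\log^{2}(N)/N^{2}
\end{equation*}
for $c_{0}$ large enough, so the tail part contributes at most $C\log^{2}(N)/N$ to $\|h\|_{n,j}^{2}$, which is exactly the residual in the statement.

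It remains to handle the bulk part $\int_{|x|\leq A_{N}}h^{2}(x)\,\Gamma_{j,X}(t,x)\,dx$, which I would do by bounding the ratio $\Gamma_{j,X}/\Gamma_{i,X}$ uniformly on $[-A_{N},A_{N}]$. With $\sigma^{*}=1$, applying Girsanov between $\mathbb{P}_{X|Y=i}$ and $\mathbb{P}_{X|Y=j}$ together with It\^{o}'s formula on a primitive of $b_{j}^{*}-b_{i}^{*}$ (this is the explicit representation of the transition density exploited in \cite{dacunha1986estimation}) gives
\begin{equation*}
\frac{\Gamma_{j,X}(t,x)}{\Gamma_{i,X}(t,x)}=\exp\!\bigl((\Psi_{j}^{*}-\Psi_{i}^{*})(x)\bigr)\,\mathbb{E}_{X|Y=i}\!\left[\exp\!\left(-\int_{0}^{t}(\gamma_{j}^{*}-\gamma_{i}^{*})(X_{s})\,ds\right)\Bigm|X_{t}=x\right],
\end{equation*}
where $\Psi_{\ell}^{*}(y):=\int_{0}^{y}b_{\ell}^{*}(u)\,du$ and $\gamma_{\ell}^{*}:=(b_{\ell}^{*\prime}+b_{\ell}^{*\,2})/2$. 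By Assumption~\ref{ass:Reg2}, $|\Psi_{j}^{*}-\Psi_{i}^{*}|(x)\leq C^{*}|x|\leq C^{*}A_{N}$, so the prefactor is at most $e^{c\sqrt{\log N}}$. Using the decomposition $b_{\ell}^{*}=f_{\ell}^{*}+\psi^{*}$ provided by Assumption~\ref{ass:Reg2}, the common unbounded part $\psi^{*}$ enters
\begin{equation*}
\gamma_{j}^{*}-\gamma_{i}^{*}=\tfrac{1}{2}(f_{j}^{*\prime}-f_{i}^{*\prime})+\tfrac{1}{2}(f_{j}^{*}-f_{i}^{*})(f_{j}^{*}+f_{i}^{*}+2\psi^{*})
\end{equation*}
only linearly, so $|\gamma_{j}^{*}-\gamma_{i}^{*}|(y)\leq C(1+|y|)$ and therefore $\int_{0}^{t}|\gamma_{j}^{*}-\gamma_{i}^{*}|(X_{s})\,ds\leq C(1+\sup_{s\leq 1}|X_{s}|)$. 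Standard Gaussian-type tail control on the $\mathbb{P}_{X|Y=i}$-conditional law of $\sup_{s\leq t}|X_{s}|$ given $X_{t}=x$ (available from \eqref{eq:Transition-density} applied to the bridge-density) shows that this conditional expectation is bounded by $e^{cA_{N}}=e^{c\sqrt{\log N}}$ uniformly on $|x|\leq A_{N}$. Plugging the resulting bound $\Gamma_{j,X}(t,x)/\Gamma_{i,X}(t,x)\leq Ce^{c\sqrt{\log N}}$ into the bulk integral gives $\mathbb{E}_{X|Y=j}[h^{2}(X_{t})\mathds{1}_{|X_{t}|\leq A_{N}}]\leq Ce^{c\sqrt{\log N}}\,\mathbb{E}_{X|Y=i}[h^{2}(X_{t})]$, and averaging over $k$ concludes.

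The delicate step is the control of the conditional expectation involving $\gamma_{j}^{*}-\gamma_{i}^{*}$. For each $\ell$ individually, $\gamma_{\ell}^{*}(y)\lesssim 1+y^{2}$ is genuinely quadratic since $b_{\ell}^{*}$ is unbounded, and a naive use of individual $\gamma_{\ell}^{*}$ would cost a prohibitive factor $e^{cA_{N}^{2}}=N^{c}$. Everything rests on the algebraic cancellation $b_{j}^{*\,2}-b_{i}^{*\,2}=(b_{j}^{*}-b_{i}^{*})(b_{j}^{*}+b_{i}^{*})$ and on the shared-drift structure of Assumption~\ref{ass:Reg2}, which force the unbounded part $\psi^{*}$ to enter $\gamma_{j}^{*}-\gamma_{i}^{*}$ only multiplied by the bounded difference $f_{j}^{*}-f_{i}^{*}$; this is precisely what turns the quadratic cost into the affordable linear one and produces the factor $e^{c\sqrt{\log N}}$ in the statement.
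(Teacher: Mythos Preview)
Your route is genuinely different from the paper's. The paper never passes through the transition-density ratio $\Gamma_{j,X}/\Gamma_{i,X}$; it works directly with the path-level Radon--Nikodym derivative
\[
\frac{d\mathbb{P}_j}{d\mathbb{P}_i}\Big|_{\mathcal{F}_{k\Delta}}=\exp\Bigl(M^{j,i}_{k\Delta}+\tfrac12\!\int_0^{k\Delta}\!(b_j^{*2}-b_i^{*2})(X_s)\,ds\Bigr),\qquad M^{j,i}_t=\int_0^t(b_j^*-b_i^*)(X_s)\,dW_s,
\]
which by Assumption~\ref{ass:Reg2} is a martingale with bounded quadratic variation. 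Using $|b_j^{*2}-b_i^{*2}|\le C^*(|b_i^*|+|b_j^*|)\le C(1+|X_s|)$, the paper bounds the density by $C\exp(M^{j,i}_{k\Delta})\exp(C_1\sup_t|X_t|)$ and splits on $\{M^{j,i}_{k\Delta}\le a\}\cap\{\sup_t|X_t|\le a\}$ with $a\asymp\sqrt{\log N}$. On this event the density is at most $e^{ca}$; on the complement one applies Cauchy--Schwarz, the crude bound $(\widetilde b_i-b_i^*)^4\le C\log^4 N$, the sub-Gaussian tail $\mathbb{P}(M^{j,i}_{k\Delta}>a)\le e^{-a^2/c}$, and the \emph{unconditional} exponential moment of $\sup_t|X_t|$, all of which are standard. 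No It\^o formula, no bridge.

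Your approach exploits the same algebraic cancellation and reaches the same factor $e^{c\sqrt{\log N}}$, but it pushes all the randomness into the conditional expectation $\mathbb{E}_{X|Y=i}[\exp(C\sup_{s\le t}|X_s|)\mid X_t=x]$. The assertion that this is bounded by $e^{cA_N}$ uniformly over $t\in(0,1]$ and $|x|\le A_N$ is believable but is not the ``standard'' consequence of~\eqref{eq:Transition-density} you suggest: those Gaussian estimates control the \emph{marginals} of the bridge, and upgrading marginal bounds to a supremum exponential moment for a non-Gaussian diffusion bridge with unbounded drift needs a real argument (chaining, or a path-level comparison with a Brownian bridge). This is the one substantive gap in your proof; the paper's trick of splitting on the unconditional event $\{\sup_t|X_t|\le a\}$ sidesteps it entirely.

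A minor point: your density-ratio identity uses It\^o on $\Psi_j^*-\Psi_i^*$, hence implicitly $b_\ell^*\in C^1$, which is not among the hypotheses here (Assumption~\ref{ass:Restrict-Model} is not invoked). Lipschitz regularity makes this repairable via a generalized It\^o formula, but the paper's argument avoids the issue altogether.
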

The above result allows us to establish the consistency and convergence rate of the risk of estimation $\mathbb{E}\left[\left\|\w{b}_i - b_i^*\right\|_n\right]$ for each label $i \in \mathcal{Y}$, from the result of Theorem~\ref{thm:upper-bound-drift}. This result extends the equivalence relation obtained in \cite{denis2024nonparametric} \textit{Proposition 4} to a wider range of diffusion models, including those with drift coefficients that satisfy Assumption~\ref{ass:Reg2}. Consequently, for each label $i \in \mathcal{Y} = \{0,1\}$, we have
\begin{equation*}
    \mathbb{E}\left[\left\|\w{b}_i - b_i^*\right\|_n^2\right] \leq C\exp\left(\sqrt{c\log(N)}\right)\mathbb{E}\left[\left\|\w{b}_i - b_i^*\right\|_{n,i}^2\right] + C\exp\left(\sqrt{c\log(N)}\right)N^{-1}.
\end{equation*}
Note that the result of Proposition~\ref{prop:ChangeProba} can be extended to the multiclass setup with $\mathcal{Y} = \{1, \ldots, K\}$ for any integer $K \geq 3$. We establish below a risk bound for the plug-in classifier $\widehat{g}$ built from diffusion models with unbounded drift coefficients. 
\begin{theo}\label{thm:PlugIn-Unbounded}
    Suppose that $\sigma^* = 1$ and $\Delta_n = \mathrm{O}(N^{-1})$. For all $i \in \mathcal{Y}$ and on the event $\{N_i > 1\}$, set
    \begin{equation*}
        A_{N_i} = \sqrt{\dfrac{6\beta}{2\beta+1}\log(N_i)}, ~~ K_{N_i} = N_i^{1/(2\beta+1)}\log^{-5/2}(N_i).
    \end{equation*}
    Under Assumptions~\ref{ass:Reg}, \ref{ass:Novikov} and \ref{ass:Reg2}, there exist constants $C, c > 0$ such that
    \begin{equation*}
       \underset{\widehat{\mathbf{f}}}{\inf}{~\underset{\mathbf{f}^* \in \mathbf{F}(\beta, R)}{\sup}{~\mathbb{E}\left[\mathcal{R}(g_{\widehat{\mathbf{f}}}) - \mathcal{R}(g_{\mathbf{f}^*})\right]}} \leq C\exp\left(c\sqrt{\log(N)}\right)N^{-\beta/(2\beta+1)}.
    \end{equation*}
\end{theo}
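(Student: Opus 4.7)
The plan follows the architecture of the proof of Theorem~\ref{thm:Minimax-PlugIn}, with two simplifying and one complicating feature: the diffusion coefficient $\sigma^*=1$ is known, so no $\w{\sigma}^2$-estimate is required and Assumption~\ref{ass:Restrict-Model} is dropped; but the drift coefficients are now unbounded, so the Girsanov change of measure from $\mathbb{P}_{X\mid Y=i}$ to $\mathbb{P}_X$ is no longer a bounded-density estimate. The crucial new input that unlocks the argument is Proposition~\ref{prop:ChangeProba}, which quantifies exactly this change of measure under Assumption~\ref{ass:Reg2}.

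For the upper bound, I would start from the classical plug-in inequality
$$\mathcal{R}(\widehat{g}) - \mathcal{R}(g^*) \leq 2\,\mathbb{E}\bigl[|\widehat{\Phi}(\bar{X}) - \Phi^*(X)|\bigr],$$
and exploit the Lipschitz character of $\phi^*$ in its two arguments and in $(p_0^*,p_1^*)$, on the overwhelming-probability event that $\widehat{p}_0,\widehat{p}_1$ stay bounded away from $0$, to reduce to controlling $\mathbb{E}[|\widehat{F}_i(\bar{X}) - F_i^*(X)|]$ and $\mathbb{E}[|\widehat{p}_i-p_i^*|]$. The latter is $O(N^{-1/2})$ by Hoeffding. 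The former I would split as $\widehat{F}_i(\bar{X}) - F_{b_i^*}(\bar{X}) + F_{b_i^*}(\bar{X}) - F_i^*(X)$, where $F_{b_i^*}(\bar{X})$ denotes the Riemann sum built with the true $b_i^*$. The pure discretization residual is handled by It\^o's formula, the moment control \eqref{eq:Moments}, and $\Delta_n = O(N^{-1})$. Using $dX_s = b_Y^*(X_s)\,ds + dW_s$ (since $\sigma^*=1$), the estimation residual $\widehat{F}_i(\bar{X})-F_{b_i^*}(\bar{X})$ splits into a drift-type piece $\Delta_n\sum_k(\w{b}_i-b_i^*)(X_{k\Delta_n})\,b_Y^*(X_{k\Delta_n})$ and a martingale piece $\sum_k(\w{b}_i-b_i^*)(X_{k\Delta_n})(W_{(k+1)\Delta_n}-W_{k\Delta_n})$; both are controlled in $L^2$ by $\|\w{b}_i - b_i^*\|_{n,N}^2$ via Cauchy--Schwarz and It\^o isometry respectively, and hence in expectation by $\|\w{b}_i - b_i^*\|_{n}^2$. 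Proposition~\ref{prop:ChangeProba} then gives
$$\|\w{b}_i - b_i^*\|_n^2 \leq C\exp\bigl(c\sqrt{\log N}\bigr)\|\w{b}_i - b_i^*\|_{n,i}^2 + C\,\frac{\log^2 N}{N},$$
and Theorem~\ref{thm:upper-bound-drift}, with the prescribed choices of $A_{N_i}$ and $K_{N_i}$, bounds the right-hand side by $\exp(c\sqrt{\log N})N^{-\beta/(2\beta+1)}$, yielding the claimed upper bound.

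For the lower bound, I would observe that the sub-class $\mathcal{G}_\beta \cap \{\sigma^*=1\}$ already contains the two-point family used to prove the lower bound in Theorem~\ref{thm:Minimax-PlugIn}: two drift configurations $(b_0^*,b_1^{*,(0)})$ and $(b_0^*,b_1^{*,(1)})$ in $\Sigma(\beta,R)$ differing by a localized bump of amplitude $\varepsilon\asymp h^\beta$ and width $h$ tuned so that $N\varepsilon^2 h$ is bounded. The bump produces an excess-risk separation of order $N^{-\beta/(2\beta+1)}$, and with $\sigma^*=1$ Girsanov gives an explicit and bounded KL divergence between the $N$-sample product laws, so Le Cam's two-point inequality closes the argument exactly as in Theorem~\ref{thm:Minimax-PlugIn}.

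The main obstacle is the transfer from $\|\cdot\|_{n,i}$ to $\|\cdot\|_n$ in the unbounded-drift regime. For bounded drifts the change of measure has a bounded Radon--Nikod\'ym density and this transfer is essentially free; Assumption~\ref{ass:Reg2} is precisely what is needed to keep the Girsanov exponential martingale under control via the cross-term $(b_0^*-b_1^*)(X_s)$, at the price of the $\exp(c\sqrt{\log N})$ factor. Making this exponential martingale interact correctly with the discretization of the stochastic integrals inside $\widehat{F}_i - F_i^*$, while simultaneously absorbing the exit probabilities $\sup_{t\in[0,1]}\mathbb{P}(|X_t|>A_{N_i})$ coming from the growing interval $A_{N_i}\asymp\sqrt{\log N_i}$, is the technically most demanding part of the proof.
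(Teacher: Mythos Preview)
Your overall architecture matches the paper's: reduce the excess risk to $\|\w{b}_i-b_i^*\|_n$ via the plug-in inequality (the paper quotes this directly as Equation~\eqref{eq:EQ0} from \cite{denis2024nonparametric}, \textit{Theorem 1}, rather than rederiving it), then invoke Proposition~\ref{prop:ChangeProba} to pass from $\|\cdot\|_n$ to $\|\cdot\|_{n,i}$, and finally bound $\|\w{b}_i-b_i^*\|_{n,i}$. The lower bound is indeed inherited verbatim from Theorem~\ref{thm:Minimax-PlugIn}, since that construction already sets $\sigma^*=1$ (minor point: the paper uses Tsybakov's multi-hypothesis scheme, not a Le Cam two-point, but this is immaterial here).

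There is one genuine gap. You invoke Theorem~\ref{thm:upper-bound-drift} to bound $\|\w{b}_i-b_i^*\|_{n,i}$, but that theorem's upper bound is stated under Assumption~\ref{ass:Restrict-Model}, which in particular requires the drift functions to be \emph{bounded}. In the present theorem the drifts are explicitly unbounded (Assumption~\ref{ass:Reg2}), so Theorem~\ref{thm:upper-bound-drift} is not available. The paper does not use it: instead it splits $\|\w{b}_i-b_i^*\|_{n,i}$ into $\|\w{b}_i-b_{A_{N_i}}\|_{n,i}$, controlled by \cite{denis2024nonparametric}, \textit{Theorem 5} (which does not need bounded drift), and the exit-probability term $\sup_{t}\mathbb{P}_i(|X_t|>A_{N_i})$, controlled via the estimate from the proof of \textit{Theorem 6} in \cite{denis2024nonparametric}, which relies on $\sigma^*=1$ rather than on Assumption~\ref{ass:Restrict-Model}. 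This is precisely why the choice $A_{N_i}=\sqrt{\tfrac{6\beta}{2\beta+1}\log N_i}$ here is \emph{different} from the $\sqrt{\tfrac{2\beta}{2\beta+1}\log N_i}$ of Theorem~\ref{thm:upper-bound-drift}: the exit-probability bound available when $\sigma^*=1$ but the drift is unbounded is weaker ($\exp(-A_{N_i}^2/3)$ rather than $\exp(-A_{N_i}^2/2)$), so a larger window is needed. You flag the exit probabilities as ``technically demanding'' in your last paragraph, but you need to make explicit that this is where the substitute for Assumption~\ref{ass:Restrict-Model} enters and that Theorem~\ref{thm:upper-bound-drift} cannot simply be cited.
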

The above theorem provides a convergence rate of order $\exp\left(c\sqrt{\log(N)}\right)N^{-\beta/(2\beta+1)}$ that is faster than the rate established in \cite{denis2024nonparametric} of order $\log^{3\beta+1}(N)N^{-3\beta/4(2\beta+1)}$. In addition, the excess risk of $\widehat{g}$ also has a lower bound of order $N^{-\beta/(2\beta+1)}$. However, in the present paper, the result of Theorem~\ref{thm:PlugIn-Unbounded} cannot be extended to diffusion models with unknown diffusion coefficients and unbounded drift functions since the study of a convergence rate for the projection estimator of $\sigma^{*2}$ requires the drift function to be bounded (see \cite{ella2025minimax}, \textit{Proposition 3.2}).

%%%%%%%%%%%%%%%%%%%%%%%%%%%%%%%%%%%%%%%%%%%%%%%%%%%%%%%%%%%%%%%%%%%%%%%%%%%%%%%%%%%%%%%%%%%%%%%%%%%%%%%%%%%%%%%%%%%%%%%%%%%%%%%
\section{Numerical illustration}
\label{sec:numeric}
%%%%%%%%%%%%%%%%%%%%%%%%%%%%%%%%%%%%%%%%%%%%%%%%%%%%%%%%%%%%%%%%%%%%%%%%%%%%%%%%%%%%%%%%%%%%%%%%%%%%%%%%%%%%%%%%%%%%%%%%%%%%%%%%%%%%%%%%%%%%

We complete the theoretical results established in Section~\ref{sec:upper-bound-plug-in} with a numerical analysis on simulated data. Note that a numerical study of the consistency of the empirical classifier $\widehat{g}$ has been carried out in \cite{denis2024nonparametric}. In this paper, we focus essentially on the numerical illustration of the convergence rate of the excess risk of $\widehat{g}$, comparing the performance of the empirical classifiers related to the different types of diffusion models whose characteristics are listed in Table~\ref{tab:rates}. To this end, we consider learning samples of size $N \in \{100k, ~ k \in [\![1, 10]\!]\}$ with time step $\Delta_n = 1/n$ with $n = 100$. We do not need to consider multiple values of $n$ since the time step does not have a significant impact on the performance of the plug-in classifier (see \cite{denis2024nonparametric}, Section 5, Table 3). We consider the following diffusion models.

\begin{itemize}
    \item Model 1 : $b_0^*(x) = 1/4+(3/4)\cos(x), ~~ b_1^*(x) = 1/2 + \sin^2(x), ~~ \sigma^*(x) = \dfrac{3}{\pi}\left(1 - \dfrac{1}{2+x^2}\right)$
    \item Model 2 : $b_0^*(x) = 4/(\pi(1+x^2)), ~~ b_1^*(x) = 1/(3 + x^2), ~~ \sigma^*(x) = \dfrac{3}{\pi}\left(1 - \dfrac{1}{2+x^2}\right)$\\
    \item Model 3 : $b_0^*(x) = 1 - x, ~~ b_1^*(x) = -1 - x, ~~ \sigma^*(x) = 1$\\
    \item Model 4 : $b_0^*(x) = \sin(1-|x|)\mathds{1}_{|x|\leq 1}, ~~ b_1^*(x) = -2\sin(1 - 4x^2)\mathds{1}_{|x| \leq 1/2}, ~~ \sigma^*(x) = 1$.
\end{itemize}
The above diffusion models satisfy Assumptions~\ref{ass:Reg}, \ref{ass:Ell} and \ref{ass:Novikov}, and Models 1 and 2 satisfy Assumption~\ref{ass:Restrict-Model}. Model 1 is an example of a diffusion model with bounded drift coefficients and an unknown diffusion coefficient. Model 2 differs from Model 1 by the integrability condition on the drift coefficients. Model 3 is a classical and simple diffusion model with unbounded drift coefficients that satisfy Assumption~\ref{ass:Reg2}. Finally, Model 4 with compactly supported drift coefficients will be particularly used to study the optimality of the convergence rate over the set of Lipschitz function. In fact, in Model 4, the drift coefficients are Lipschitz but not derivative on the real line, which implies that the smoothness parameter $\beta$ equals $1$ ($\beta=1$). We then study the evolution of the excess risk of $\widehat{g}$ with respect to that of $N^{-1/3}$ as $N$ varies in $[\![100, 1000]\!]$. 

\begin{figure}[hbtp]
\begin{minipage}[t]{.45\textwidth}
\raggedright
  \centering
  \includegraphics[width=0.7\linewidth, height=0.2\textheight]{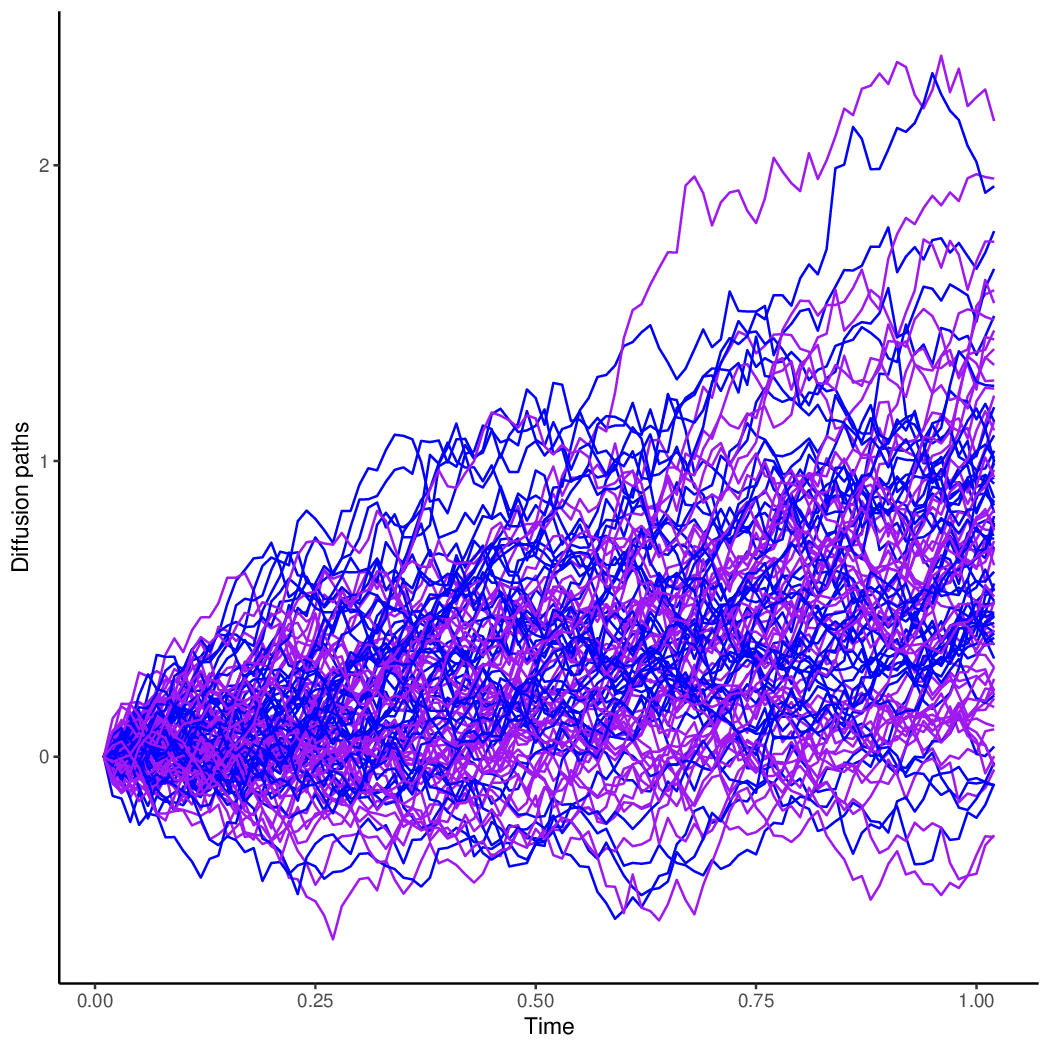}
  %{pathmodel-0.5.eps}
\end{minipage}
\hfill
\noindent
\begin{minipage}[t]{.55\textwidth}
\raggedleft
  \centering
  \includegraphics[width=0.7\linewidth, height=0.2\textheight]{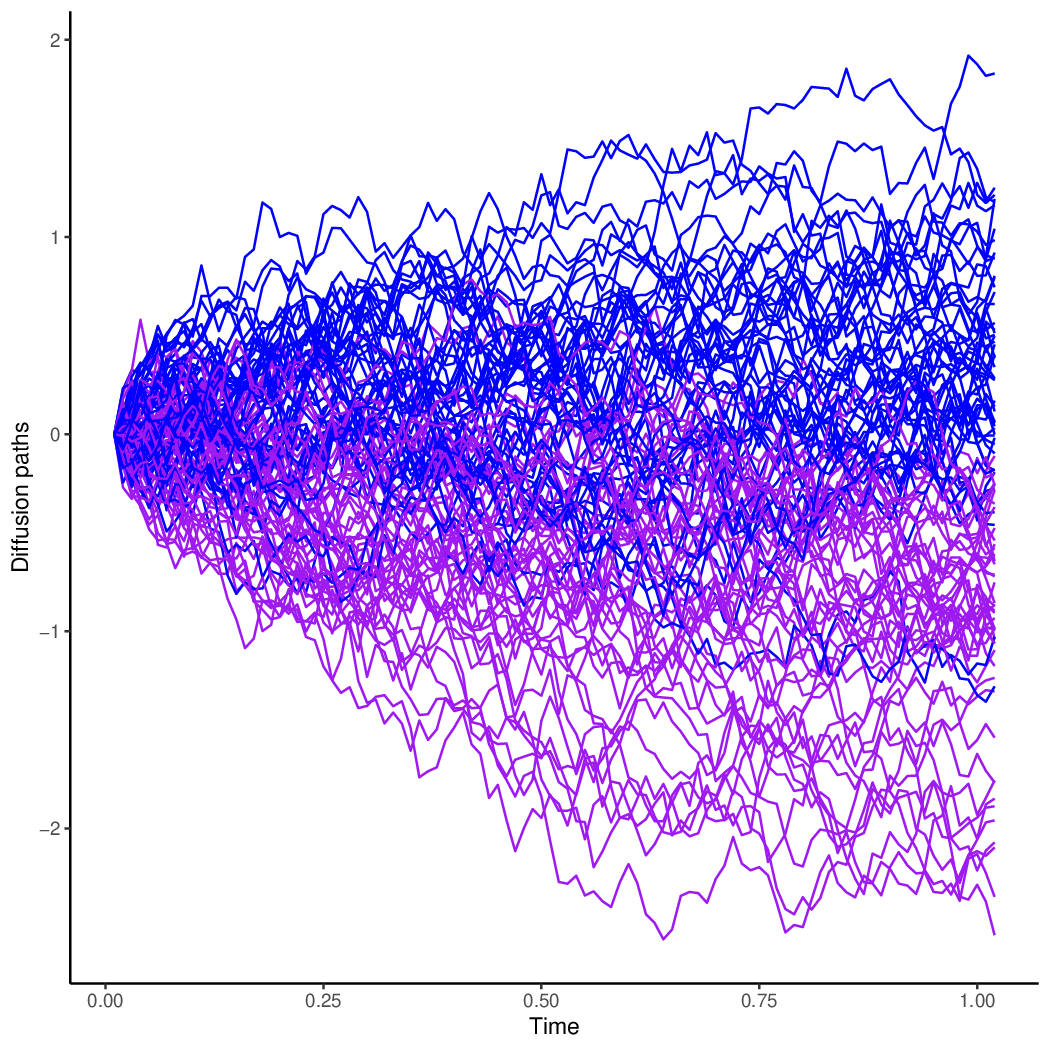}
  %{pathmodel-4.eps}
\end{minipage}
\caption{{\small \textit{Dispersion of diffusion paths from Model 1 (right) and Model 3 (left); with $N=100$ and $n=100$.}}}
\label{fig:paths}
\end{figure}

Figure~\ref{fig:paths} displays the trajectories generated by Models 1 and 3. From these figures, we can anticipate the simplicity of Model 3 with respect to Model 1, since the two classes from Model 3 seem to be more easily separable. 

\subsection{Numerical performances of the Bayes classifiers}

For each diffusion model, the performance of the Bayes classifier is assessed from learning samples of size $N=2000$ with $n=500$. The implementation strategy is detailed in \cite{denis2024nonparametric}, \textit{Section 5.1.1}. We compute the average classification error $\widehat{\mathcal{R}}(g^*)$ of the Bayes classifier $g^*$ on $100$ independent learning samples. More precisely, we have
\begin{equation*}
    \widehat{\mathcal{R}}_N(g^*) = \dfrac{1}{100}\sum_{s \in [\![1, 100]\!]}\dfrac{1}{N}\sum_{(X,y) \in \mathcal{D}_N^s}\mathds{1}_{g^*(X) \neq y},
\end{equation*}
where $\mathcal{D}_N^1, \ldots, \mathcal{D}_N^{100}$ are independent learning samples and $N = 2000$. 

\begin{table}[!h]
\centering
\begin{tabular}{lcccc}
\toprule
 & Model 1 & Model 2 & Model 3 & Model 4\\
\midrule
$\widehat{\mathcal{R}}(g^*)$ & $0.341 ~ (0.010)$ & $0.215 ~ (0.010)$ & $0.161 ~ (0.006)$ & $0.194 ~ (0.009)$ \\
\bottomrule
\end{tabular}
\caption{\small \textit{Assessment of the the average classification error $\widehat{\mathcal{R}}(g^*)$ of the Bayes classifier of each diffusion model.}}
\label{tab:bayes}
\end{table}

The numerical results shown in Table~\ref{tab:bayes} characterize the level of separation of classes, thereby numerically illustrating the simplicity of models such as Model 3 with respect to Model 1, which appears to be more complex. 

\subsection{Numerical results on the Plug-in classifiers}

The performance of each plug-in classifier is evaluated by calculating their average excess risk on $100$ independent learning samples. 

\begin{table}[!h]
\centering
\begin{tabular}{lccc}
\toprule
 & $N=100, ~ n=100$ & $N=500, ~ n=100$ & $N=1000, ~ n=100$ \\
\midrule
Model 1 & $0.391 ~~ (0.062)$ & $0.365 ~~ (0.0055)$ & $0.360 ~~ (0.054)$ \\
\midrule
Model 2 & $0.243 ~~ (0.102)$ & $0.223 ~~ (0.066)$ & $0.215 ~~ (0.041)$ \\
\midrule
Model 3 & $0.172 ~~ (0.038)$ & $0.166 ~~ (0.037)$ & $0.163 ~~ (0.040)$ \\
\midrule
Model 4 & $0.205 ~~ (0.040)$ & $0.197 ~~ (0.035)$ & $0.195 ~~ (0.040)$ \\
\bottomrule
\end{tabular}
\caption{\small \textit{Classification error of plug-in classifiers built from Models 1, 2, 3 and 4.}}
\label{tab:excess_risk}
\end{table}

The results of Table~\ref{tab:excess_risk} illustrate the consistency of the empirical classification rules. The convergence rate depends on the smoothness parameter of the drift and diffusion coefficients. The fact that the rate for Model 4 is one of the lowest is explained by the regularity of the drift coefficients, implying that $\beta = 1$. Focusing on Model 4, we consider the numerical constants $c = 0.009$ and $C = 0.055$, and study the evolution of excess risk $\widehat{\mathcal{R}}_N(\widehat{g}) - \widehat{\mathcal{R}}_N(g^*)$ with respect to the size $N$ of the learning sample. We also consider the sequences $\mathrm{Upper}(N) = CN^{-1/3}$ and $\mathrm{Lower}(N) = cN^{-1/3}$. Figure~\ref{fig:optimal_rate} illustrates the optimality of the rate of order $N^{-1/3}$ for the excess risk of the resulting plug-in classifier. This result is a numerical illustration of the respective results of Theorems~\ref{thm:Minimax-PlugIn} and \ref{thm:Minimax-PlugIn2} for the case of compactly supported drift coefficients. The choice of numerical constants $c>0$ and $C>0$ may draw the reader's attention to the dependency of the constants $C>0$ in Theorem~\ref{thm:Minimax-PlugIn} and $c>0$ in Theorem~\ref{thm:Minimax-PlugIn2} on the model parameters.

\begin{figure}[hbtp]
\centering
  \includegraphics[width=0.55\linewidth, height=0.3\textheight]{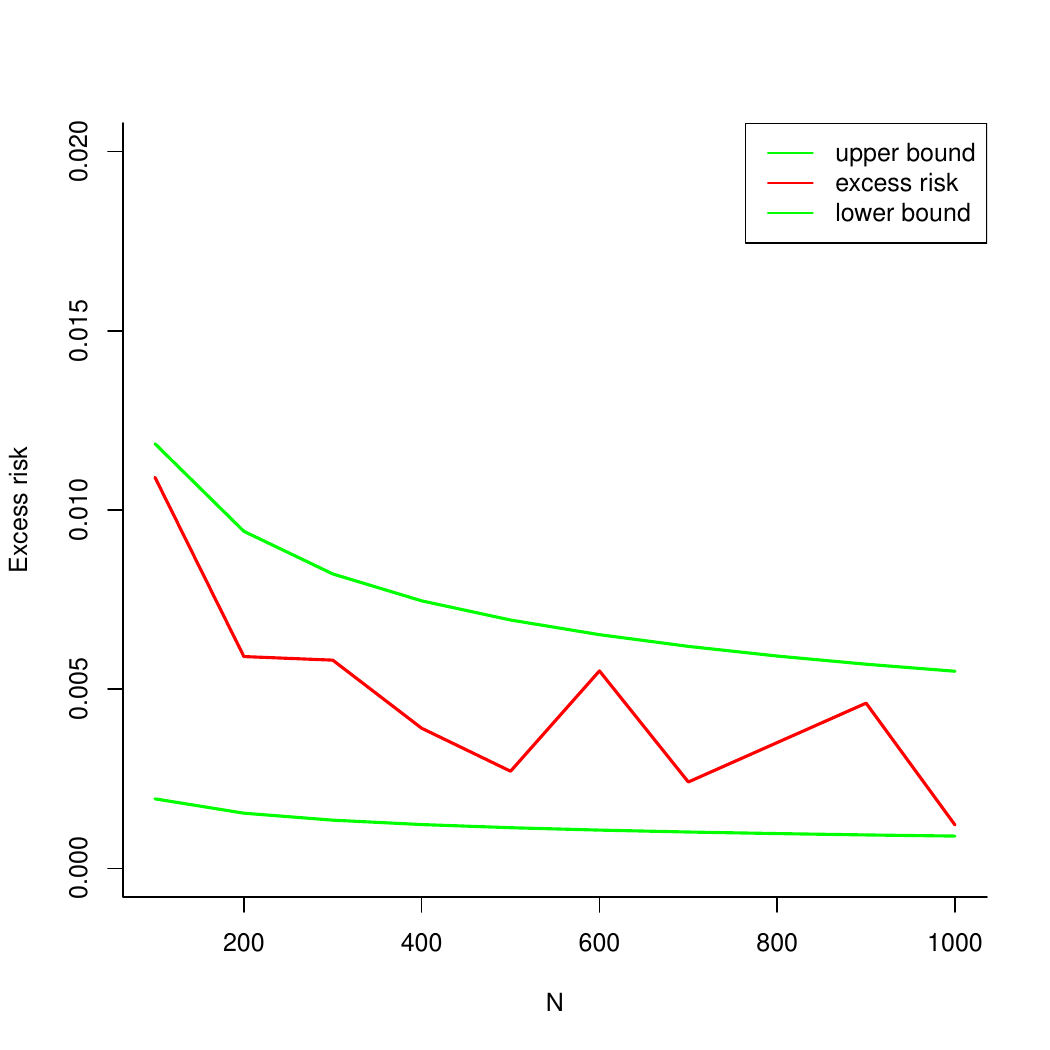}
\caption{{\small \textit{Illustration of the optimality of the convergence rate of the plug-in classifier for Model 4, the optimal rate being $N^{-1/3}$ and $N$ ranging from $100$ to $1000$.}}}
\label{fig:optimal_rate}
\end{figure}

%%%%%%%%%%%%%%%%%%%%%%%%%%%%%%%%%%%%%%%%%%%%%%%%%%%%%%%%%%%%%%%%%%%%%%%%%%%%%%%%%%%%%%%%%%%%%%%%%%%%%%%%%%%%%%%%%%%%%%%%%%%%%%%
\section{Conclusion}
\label{sec:conclusion}
%%%%%%%%%%%%%%%%%%%%%%%%%%%%%%%%%%%%%%%%%%%%%%%%%%%%%%%%%%%%%%%%%%%%%%%%%%%%%%%%%%%%%%%%%%%%%%%%%%%%%%%%%%%%%%%%%%%%%%%%%%%%%%%%%%%%%%%%%%%%

In conclusion, this paper studied the minimax convergence rates of a binary classification procedure for time-homogeneous SDE paths under different sets of assumptions on the coefficients of the diffusion model. This work extends previous contributions on the construction of nonparametric multiclass classification procedures for SDE paths (see \cite{denis2024nonparametric}). Establishing rates of order $N^{-\beta/(2\beta+1)}$ (with an extra factor) for $b_i^*$ and a rate faster than $N^{-\beta/(2\beta+1)}$ for $\sigma^*$ is particularly challenging, as it requires the use of the exact formula of the transition density provided in \cite{dacunha1986estimation}, leading to very technical proofs for the control of $\left\|\Psi_m^{-1}\right\|_{\mathrm{op}}$ where $\Psi_m$ is the Gram matrix of basis of projection and $m$ its dimension, or the control of probability $\mathbb{P}(|X_t| > A_N)$ for all $t \in [0,1]$, where $A_N = \mathrm{O}(\sqrt{\log(N)})$. Moreover, for diffusion models with unbounded drift coefficients and $\sigma^* = 1$, we also extend the equivalence relation between the empirical norms $\|.\|_{n}$ and $\|.\|_{n,i}, ~ i \in \mathcal{Y}$, leading to a rate of order $N^{-\beta/(2\beta+1)}\exp\left(\sqrt{c\log(N)}\right)$ for the plug-in classifier. Establishing this result required a strong assumption on $b_0^*$ and $b_1^*$ for technical reasons detailed in the proof of Proposition~\ref{prop:ChangeProba}. These two non-trivial extensions were followed with an enhancement of the convergence rate of the plug-in classifier assuming that $b_0^*$ and $b_1^*$ are integrable on $\mathbb{R}$, or the drift functions are compactly supported and $\sigma^*=1$. For the last case, an optimal convergence rate of order $N^{-\beta/(2\beta+1)}$ is derived, based on the optimal rate of order $N^{-\beta/(2\beta+1)}$ for estimators of drift coefficients. 

As perspectives, we can study faster convergence rates of the excess risk of the plug-in classifier under low-noise conditions on the regression function as studied in \cite{gadat2020optimal} in the context of binary classification of the plug-in type for diffusion paths modeled by Gaussian processes.  

%%%%%%%%%%%%%%%%%%%%%%%%%%%%%%%%%%%%%%%%%%%%%%%%%%%%%%%%%%%%%%%%%%%%%%%%%%%%%%%%%%%%%%%%%%%%%%%%%%%%%%%%%%%%%%%%%%%%%%%%%%%%%%%%%%%%%%%%%%%%
%%%%%%%%%%%%%%%%%%%%%%%%%%%%%%%%%%%%%%%%%%%%%%%%%%%%%%%%%%%%%%%%%%%%%%%%%%%%%%%%%%%%%%%%%%%%%%%%%%%%%%%%%%%%%%%%%%%%%%%%%%%%%%%
\section{Proofs}
\label{sec:proofs}
%%%%%%%%%%%%%%%%%%%%%%%%%%%%%%%%%%%%%%%%%%%%%%%%%%%%%%%%%%%%%%%%%%%%%%%%%%%%%%%%%%%%%%%%%%%%%%%%%%%%%%%%%%%%%%%%%%%%%%%%%%%%%%%%%%%%%%%%%%%%
%%%%%%%%%%%%%%%%%%%%%%%%%%%%%%%%%%%%%%%%%%%%%%%%%%%%%%%%%%%%%%%%%%%%%%%%%%%%%%%%%%%%%%%%%%%%%%%%%%%%%%%%%%%%%%%%%%%%%%%%%%%%%%%

The section is devoted to the proofs of the main results of the paper. We denote the constants by $C>0$ or $c>0$, and these constants can change from one line to another. Moreover, we also denote by $C_{\theta}>0$ constants that depend on an integer or a parameter $\theta$. Finally, for simplicity, we denote the time step by $\Delta$. 

\subsection{Proofs of Section~\ref{sec:upper-bound-plug-in}}

\subsubsection{Proof of Theorem~\ref{thm:upper-bound-drift}}
\label{subsec:ProofTheorem3.1}

\begin{proof}
    For each $i \in \mathcal{Y}$, denote by $I_i \subset \mathbb{R}$ the set on which the drift estimation is performed. Then $I_i = \mathrm{Supp}(b_i^*)$ when $b_i^*$ is compactly supported, and $I_i = [-A_{N_i}, A_{N_i}]$ when the support of $b_i^*$ is the whole line $\mathbb{R}$. 
    
    \subsubsection*{Case of compactly supported drift coefficients}

    From \cite{denis2021ridge}, \textit{Equation (D.6) in the proof of Proposition 4.4 and Lemma D.2}, conditional on event $\{N_i > 1\}$, there exists a constant $C>0$ such that
    \begin{equation*}
        \mathbb{E}^{(i)}\left(\left\|\widehat{b}_i - b_i^*\right\|_{n, N_i}^2\mathds{1}_{\Omega_{n,N_i,K_{N_i}}}\right) \leq C\left[\left(\dfrac{M+1}{K_{N_i}}\right)^{2\beta} + \dfrac{K_{N_i}}{N_i} + \Delta\right],
    \end{equation*}
    where 
    $$\Omega_{n,N_i,K_{N_i}} =\bigcap_{h \in \mathcal{S}_{K_{N_i}, I_i}} \left\{\left|\dfrac{\|h\|_{n,N_i}^2}{\|h\|_{n,i}^2} - 1\right| \leq \dfrac{1}{2}\right\},$$
    and for all $h \in \mathcal{S}_{N_i, I_i}, ~~ (1/2)\|h\|_{n,i}^2 \leq \|h\|_{n,N_i}^2 \leq (3/2)\|h\|_{n,i}^2$. We deduce that conditional on event $\{N_i > 1\}$,
    \begin{equation}\label{eq:cp-bound1}
        \begin{aligned}
            \mathbb{E}^{(i)}\left[\left\|\widehat{b}_i - b_i^*\right\|_{n, N_i}^2\right] \leq &~ \mathbb{E}^{(i)}\left[\left\|\widehat{b}_i - b_i^*\right\|_{n, N_i}^2\mathds{1}_{\Omega_{n,N_i,K_{N_i}}}\right] + \mathbb{E}^{(i)}\left[\left\|\widehat{b}_i - b_i^*\right\|_{n, N_i}^2\mathds{1}_{\Omega_{n,N_i,K_{N_i}}^{c}}\right]\\
            \leq &~ C\left[\left(\dfrac{M+1}{K_{N_i}}\right)^{2\beta} + \dfrac{K_{N_i}}{N_i} + \Delta\right] + 2(K_{N_i} + M)|I_i|\log(N_i)\mathbb{P}^{(i)}\left(\Omega_{n,N_i,K_{N_i}}^{c}\right).
        \end{aligned}
    \end{equation}
    From \cite{ella2025minimax}, \textit{Lemma 2.6}, the Gram matrix $\Psi_{K_{N_i}}$ of the spline basis built on the compact set $I_i = \mathrm{Supp}(b_i^*)$ is invertible and satisfies 
    $$\mathcal{L}(K_{N_i} + M)\left\|\Psi_{K_{N_i}}^{-1}\right\|_{\mathrm{op}} = \mathrm{O}\left(K_{N_i}\log(N_i)\right),$$
    where
    $$\mathcal{L}(K_{N_i} + M) := \underset{x \in \mathbb{R}}{\sup}{\sum_{\ell = -M}^{K_{N_i} - 1}B_{\ell}^2(x)}.$$
    Then, from \cite{denis2024nonparametric}, \textit{Equation (14)}, there exists a constant $C>0$ such that conditional on event $\{N_i > 1\}$,
    \begin{equation}\label{eq:cp-bound2}
        \mathbb{P}^{(i)}\left(\Omega_{n,N_i,K_{N_i}}^{c}\right) \leq 2(K_{N_i} + M)\exp\left(-\dfrac{N_i}{|I_i|\left\|\Psi_{K_{N_i}}^{-1}\right\|_{\mathrm{op}}}\right) \leq 2(K_{N_i} + M)\exp\left(-\dfrac{N_i}{K_{N_i}\log(N_i)}\right).
    \end{equation}
    For $K_{N_i} = N_i^{1/(2\beta+1)}, ~ \Delta = \mathrm{O}\left(N^{-1}\right)$ and conditional on $\{N_i > 1\}$, we deduce from Equations~\eqref{eq:cp-bound2} and \eqref{eq:cp-bound1} that for $N_i$ a.s. large enough,
    \begin{equation*}
        \mathbb{E}^{(i)}\left[\left\|\widehat{b}_i - b_i^*\right\|_{n, N_i}^2\right] \leq CN_i^{-2\beta/(2\beta+1)},
    \end{equation*}
    where $C>0$ is a new constant. It follows that
    \begin{equation*}
         \begin{aligned}
             \mathbb{E}\left[\left\|\widehat{b}_i - b_i^*\right\|_{n, N_i}^2\right] = &~ \mathbb{E}\left[\left\|\widehat{b}_i - b_i^*\right\|_{n, N_i}^2\mathds{1}_{N_i>1}\right] + \mathbb{E}\left[\left\|b_i^*\right\|_{n, N_i}^2\mathds{1}_{N_i \leq 1}\right]\\
             \leq &~ \mathbb{E}\left[\mathds{1}_{N_i>1}\mathbb{E}^{(i)}\left(\left\|\widehat{b}_i - b_i^*\right\|_{n, N_i}^2\right)\right] + \left\|b_i^*\right\|_{\infty}^2\mathbb{P}(N_i \leq 1)\\
             \leq &~ C\mathbb{E}\left[\left(\dfrac{\mathds{1}_{N_i>1}}{N_i}\right)^{2\beta/(2\beta+1)}\right] + \left\|b_i^*\right\|_{\infty}^2\mathbb{P}(N_i \leq 1).
         \end{aligned}
    \end{equation*}
    Since $N_i \sim \mathcal{B}(N, p_i^*)$, we have
    \begin{equation}\label{eq:cp-bound4}
        \mathbb{P}\left(N_i \leq \right) = \mathbb{P}(N_i = 0) + \mathbb{P}(N_i = 1) \leq 2Np_i^*(1-p_i^*)^{N-1}.
    \end{equation}
    Then, using Jensen's inequality and from \cite{gyorfi2006distribution}, \textit{Chapter 4, Lemma 4.1, p.61}, we obtain the following
    \begin{equation}\label{eq:cp-bound3}
         \begin{aligned}
             \mathbb{E}\left[\left\|\widehat{b}_i - b_i^*\right\|_{n, N_i}^2\right] \leq &~ C\mathbb{E}\left[\left(\dfrac{\mathds{1}_{N_i>1}}{N_i}\right)^{2\beta/(2\beta+1)}\right] + \left\|b_i^*\right\|_{\infty}^2\mathbb{P}(N_i \leq 1)\\
             \leq &~ C\left(\mathbb{E}\left[\dfrac{\mathds{1}_{N_i > 1}}{N_i}\right]\right)^{2\beta/(2\beta+1)} + 2p_i^*\left\|b_i^*\right\|_{\infty}^2N(1 - p_i^*)^{N-1}\\
             \leq &~ CN^{-2\beta/(2\beta+1)},
         \end{aligned}
    \end{equation}
    where $C>0$ is a new constant depending on $\beta$ and $p_i^* \in (0,1)$. To conclude this first part of the proof, using the equivalence relation between the empirical norm $\|.\|_{n,i}$ and the empirical pseudo-norm $\|.\|_{n,N_i}$, we obtain for all $h \in \mathcal{S}_{K_{N_i}, I_i}$,
    \begin{equation}\label{eq:cp-bound5}
        \begin{aligned}
            \mathbb{E}\left[\left\|\widehat{b}_i - b_i^*\right\|_{n,i}^2\right] \leq &~ 2\mathbb{E}\left[\left\|\widehat{b}_i - h\right\|_{n,i}^2\mathds{1}_{\Omega_{n,N_i,K_{N_i}}}\right] + 2\mathbb{E}\left[\left\|\widehat{b}_i - h\right\|_{n,i}^2\mathds{1}_{\Omega_{n,N_i,K_{N_i}}^{c}}\right]  + 2\mathbb{E}\left[\left\|h - b_i^*\right\|_{n,i}^2\right]\\
            \leq &~ 4\mathbb{E}\left[\left\|\widehat{b}_i - h\right\|_{n,N_i}^2\right] + CN^{1/(2\beta+1)}\log(N)\mathbb{E}\left[\mathds{1}_{N_i > }\mathbb{P}^{(i)}\left(\Omega_{n,N_i,K_{N_i}}^{c}\right)\right]\\
            & + 2\mathbb{E}\left[\left\|h - b_i^*\right\|_{n,i}^2\right]\\
            \leq &~ 8\mathbb{E}\left[\left\|\widehat{b}_i - b_i^*\right\|_{n,N_i}^2\right] + CN^{1/(2\beta+1)}\log(N)\mathbb{E}\left[\mathds{1}_{N_i > }\mathbb{P}^{(i)}\left(\Omega_{n,N_i,K_{N_i}}^{c}\right)\right]\\
            & + 10\mathbb{E}\left[\underset{h \in \mathcal{S}_{K_{N_i}, I_i}}{\inf}\left\|h - b_i^*\right\|_{n,i}^2\right].
        \end{aligned}
    \end{equation}
    Recall that $K_{N_i} = N_i^{1/(2\beta+1)}, ~ \Delta = \mathrm{O}\left(N^{-1}\right)$ and the random variable $N_i$ follows a binomial probability distribution with parameters $(N, p_i^*)$. From Equations~\eqref{eq:cp-bound5}, \eqref{eq:cp-bound3}, \eqref{eq:cp-bound4}, \eqref{eq:cp-bound2}, Lemma D.2 in \cite{denis2021ridge}, Jensen's inequality, Lemma 4.1 in \cite{gyorfi2006distribution}, \textit{Chapter 4, p.61}, there exists a constant $C>0$ such that
    \begin{equation*}
        \begin{aligned}
          \mathbb{E}\left[\left\|\widehat{b}_i - b_i^*\right\|_{n,i}^2\right] \leq &~ CN^{-2\beta/(2\beta+1)}.
        \end{aligned}
    \end{equation*}
    Since the above result is obtained for any $b_i^* \in \Sigma_{c}(\beta, R)$, we obtain
    \begin{equation*}
        \underset{b_i^* \in \Sigma_c(\beta, R)}{\sup}\mathbb{E}\left[\left\|\widehat{b}_i - b_i^*\right\|_{n,i}\right] \leq CN^{-\beta/(2\beta+1)}.
    \end{equation*}
    
    \subsubsection*{Case of a non compactly supported and bounded drift coefficient}
    
    Now, the support of $b_i^*$ is the whole line $\mathbb{R}$, and the estimation interval is $I_i = [-A_{N_i}, A_{N_i}]$. We have the following:
    \begin{equation}\label{eq:DecompRisk}
        \begin{aligned}
            \mathbb{E}\left[\left\|\widehat{b}_i - b_i^*\right\|_{n,i}\right] \leq &~  \mathbb{E}\left[\left\|\widehat{b}_i - b_{I_i}\right\|_{n,i}\mathds{1}_{N_i>1}\right] + \mathbb{E}\left[\left\|b_{I_i} - b_i^*\right\|_{n,i}\mathds{1}_{N_i > 1}\right] + \left\|b_i^*\right\|_{\infty}\mathbb{P}\left(N_i \leq 1\right)\\
            \leq &~ \mathbb{E}\left[\left\|\widehat{b}_i - b_{I_i}\right\|_{n,i}\mathds{1}_{N_i>1}\right] + \left\|b_i^*\right\|_{\infty}\mathbb{E}\left[\mathds{1}_{N_i>1}\underset{t \in [0,1]}{\sup}{~\mathbb{P}^{(i)}\left(\left|X_t\right| > A_{N_i}\right)}\right]\\
            & + 2\left\|b_i^*\right\|_{\infty}N(1-p_i^*)^{N-1}.
        \end{aligned}
    \end{equation}
    First, under Assumptions~\ref{ass:Reg}, \ref{ass:Ell}, \ref{ass:Restrict-Model}, From \cite{ella2025minimax}, \textit{Lemma 2.6 and Equation (12)}, the Gram matrix $\mathbf{\Psi}_{K_{N_i}}$ is invertible and satisfies
    \begin{equation*}
        \left\|\mathbf{\Psi}^{-1}_{K_{N_i}}\right\|_{\mathrm{op}} \leq C\dfrac{N_i}{\log^{2}(N_i)},
    \end{equation*}
    with $A_{N_i} = \sqrt{\frac{2\beta}{2\beta+1}\log(N_i)}$ and $K_{N_i} = N_i^{1/(2\beta+1)}\log^{-5/2}(N_i)$, we obtain from \cite{denis2024nonparametric}, \textit{Theorem 5} that there exists a constant $C>0$ such that
    \begin{equation}\label{eq:Term1}
        \mathbb{E}\left[\left\|\widehat{b}_i - b_{A_{N_i}}\right\|_{n,i}\mathds{1}_{N_i>1}\right] \leq \sqrt{\mathbb{E}\left[\left\|\widehat{b}_i - b_{A_{N_i}}\right\|_{n,i}^2\mathds{1}_{N_i>1}\right]} \leq C\log^{3\beta}(N)N^{-\beta/(2\beta+1)}.
    \end{equation}
    Second, from \cite{ella2025minimax}, \textit{Proof of Proposition 3.2, Equation (50)}, there exist constants $c,C > 0$ such that 
    \begin{equation*}
        \mathbb{E}\left[\mathds{1}_{N_i > 1}\underset{t \in [0,1]}{\sup}{\mathbb{P}^{(i)}\left(|X_t| > A_{N_i}\right)}\right] \leq C\exp(cA_N)\mathbb{E}\left[\mathds{1}_{N_i > 1}\exp\left(-\dfrac{A_{N_i}}{2}\right)\right].
    \end{equation*}
    For $A_{N_i} = \sqrt{\frac{2\beta}{2\beta+1}\log(N_i)}$, there exists a constant $C>0$ such that
    \begin{equation}\label{eq:ProbaOut1}
        \begin{aligned}
            \mathbb{E}\left[\mathds{1}_{N_i > 1}\underset{t \in [0,1]}{\sup}{\mathbb{P}^{(i)}\left(|X_t| > A_{N_i}\right)}\right] \leq &~ C\exp\left(cA_N\right)\mathbb{E}\left[\mathds{1}_{N_i > 1}N_i^{-\beta/(2\beta+1)}\right]\\
            \leq &~ C\exp(cA_N)N^{-\beta/(2\beta+1)}.
        \end{aligned}
    \end{equation}
    We deduce from Equations~\eqref{eq:ProbaOut1}, \eqref{eq:Term1} and \eqref{eq:DecompRisk}, that there exist $C,c>0$ depending on $\sigma_1^*$, $\|b_i^*\|_{\infty}$, $\mathbf{p}^*$ and $\beta$ such that
    \begin{equation*}
        \underset{b_i^* \in \Sigma(\beta, R)}{\sup}{~\mathbb{E}\left[\left\|\widehat{b}_i - b_i^*\right\|_{n,i}\right]} \leq C\exp\left(c\sqrt{\log(N)}\right)N^{-\beta/(2\beta+1)}.
    \end{equation*}

    \subsubsection*{Case of bounded drift $b_i^*$ such that $\int_{\mathbb{R}}|b_i^*(x)|dx < \infty, ~ i \in \mathcal{Y}$}

    The proof is almost exactly the same as that of Proposition 3.2 in \cite{ella2025minimax}. the benefit with an integrable drift on $\mathbb{R}$ is that the function $x \mapsto H_i(x)$ from the exact formula of the transition density (see proof of Lemma~\ref{lm:ChangeNorm} in Appendix) is bounded on $\mathbb{R}$. Consequently, there exist constants $C, C^{\prime}>0$ such that
    \begin{equation*}
        \mathbb{E}\left[\mathds{1}_{N_i > 1}\underset{t \in [0,1]}{\sup}{\mathbb{P}^{(i)}\left(|X_t| > A_{N_i}\right)}\right] \leq C\mathbb{E}\left[\mathds{1}_{N_i > 1}\exp\left(-\dfrac{A_{N_i}}{2}\right)\right] \leq C^{\prime}N^{-\beta/(2\beta+1)},
    \end{equation*}
    which implies from Equation~\eqref{eq:Term1} and \eqref{eq:DecompRisk} that
    \begin{equation*}
        \underset{b_i^* \in \Sigma_{\mathrm{int}}(\beta, R)}{\sup}{~\mathbb{E}\left[\left\|\widehat{b}_i - b_i^*\right\|_{n,i}\right]} \leq C\log^{3\beta}(N)N^{-\beta/(2\beta+1)}.
    \end{equation*}
\end{proof}

\subsubsection{Proof of Theorem~\ref{thm:Minimax-PlugIn}}
\label{subsec:ProofTheorem3.2}

The proof of Theorem~\ref{thm:Minimax-PlugIn} relies on the following lemma.

\begin{lemme}\label{lm:ChangeNorm}
    Suppose that the drift functions $b_0^*$ and $b_1^*$ are integrable on $\mathbb{R}$. Under Assumptions~\ref{ass:Reg}, \ref{ass:Ell} and \ref{ass:Restrict-Model}, there exists a constant $C>0$ such that for all $i,j \in \mathcal{Y}$ such that $i \neq j$,
    \begin{equation*}
        \left\|\widehat{b}_i - b_i^*\right\|_{n,j}^2 \leq C\left\|\widehat{b}_i - b_i^*\right\|_{n,i}^2,
    \end{equation*}
    where $\widehat{b}_i$ is a nonparametric estimator of $b_i^*$.
\end{lemme}
There is a substantial improvement compared to the equivalence relation established in \cite{denis2024nonparametric}, \textit{Proposition 4}, due to the integrability condition on the drift coefficients $b_i^*, ~ i \in \mathcal{Y}$. The proof of Lemma~\ref{lm:ChangeNorm} is provided in the appendix.

\begin{proof}[\textbf{Proof of Theorem}~\ref{thm:Minimax-PlugIn}]
For any $\mathbf{f}^* \in \mathbf{F}(\beta, R)$ under Assumptions~\ref{ass:Reg}, \ref{ass:Ell} and \ref{ass:Novikov}, and from \cite{denis2024nonparametric}, \textit{Theorem 1}, the excess risk of $\widehat{g} = g_{\widehat{\mathbf{f}}}$ satisfies
\begin{equation}\label{eq:EQ0}
    \mathbb{E}\left[\mathcal{R}(g_{\widehat{\mathbf{f}}}) - \mathcal{R}(g_{\mathbf{f}^*})\right] \leq C\left(\sqrt{\Delta} + \dfrac{1}{p_{\min}\sqrt{N}} + \mathbb{E}\left[\dfrac{b_{\max}}{\sigma_0^{2}}\sum_{i=0}^{1}{\left\|\w{b}_i - b_i^*\right\|_n}\right] + \mathbb{E}\left[\dfrac{1}{\sigma_0^2}\left\|\w{\sigma}^2 - \sigma^{*2}\right\|_n\right]\right),
\end{equation}
where $C>0$ is a constant independent of $N$, $p_{\min} = \min(p_0^*, p_1^*) > 0$, the quantities
$$b_{\max} = \max\left\{\left\|\w{b}_0\right\|_{\infty}, \left\|\w{b}_1\right\|_{\infty}\right\} = (\log(N))^{1/2} ~ \mathrm{and} ~ \sigma_0^2 = 1/\log(N) \leq \underset{x \in \mathbb{R}}{\inf}{~\w{\sigma}^2(x)}$$ 
are logarithmic factors  depending on the size $N$ of the learning sample. Under Assumptions~\ref{ass:Reg}, \ref{ass:Ell} and \ref{ass:Restrict-Model}, we obtain from \cite{ella2025minimax}, \textit{Proposition 3.2} that there exist constants $C, c > 0$ such that
\begin{equation}\label{eq:EQ3}
    \mathbb{E}\left[\left\|\widetilde{\sigma}^2 - \sigma^{*2}\right\|_{n}\right] \leq C\exp\left(c\sqrt{\log(N)}\right)N^{-3\beta/2(2\beta+1)}.
\end{equation}
Moreover, from Theorem~\ref{thm:upper-bound-drift}, we have
\begin{equation*}
    \mathbb{E}\left[\left\|\w{b}_i - b_i^*\right\|_{n,i}\right] \leq C\exp\left(c\sqrt{\log(N)}\right)N^{-\beta/(2\beta+1)}, ~~ i \in \mathcal{Y},
\end{equation*}
and from \cite{denis2024nonparametric} \textit{Proposition 4}, there exist constants $C,c>0$ such that for all $i \in \mathcal{Y}$,
\begin{equation}\label{eq:EQ4}
    \begin{aligned}
        \mathbb{E}\left[\left\|\w{b}_i - b_i^*\right\|_{n}\right] \leq &~ C\exp\left(c\sqrt{\log(N)}\right)\mathbb{E}\left[\left\|\w{b}_i - b_i^*\right\|_{n,i}\right] + C\dfrac{\log^2(N)}{N} \\
        \leq &~ C\exp\left(c\sqrt{\log(N)}\right)N^{-\beta/(2\beta+1)}.
    \end{aligned}
\end{equation}
For $\Delta = \mathrm{O}(N^{-1})$, we finally obtain from Equations~\eqref{eq:EQ4}, \eqref{eq:EQ3} and \eqref{eq:EQ0} that
\begin{equation*}
    \underset{\mathbf{f}^* \in \mathbf{F}(\beta, R)}{\sup}{~\mathbb{E}\left[\mathcal{R}(g_{\widehat{\mathbf{f}}}) - \mathcal{R}(g_{\mathbf{f}^*})\right]} \leq C\exp\left(c\sqrt{\log(N)}\right)N^{-\beta/(2\beta+1)},
\end{equation*}
where $C>0$ and $c>0$ are new constants.

\subsubsection*{Case of bounded drift $b_i^*$ such that $\int_{\mathbb{R}}|b_i^*(x)|dx < \infty, ~ i \in \mathcal{Y}$}

From Theorem~\ref{thm:upper-bound-drift} and Lemma~\ref{lm:ChangeNorm}, there exists a constant $C> 0$ such that
\begin{equation}\label{eq:EQ5}
    \begin{aligned}
        \mathbb{E}\left[\left\|\w{b}_i - b_i^*\right\|_{n}\right] \leq &~ C\log^{3\beta}(N)N^{-\beta/(2\beta+1)}, ~~ i \in \mathcal{Y}.
    \end{aligned}
\end{equation}
It follows from Equations~\eqref{eq:EQ5}, \eqref{eq:EQ3} and \eqref{eq:EQ0} with $\Delta = \mathrm{O}\left(N^{-1}\right)$ that 
\begin{equation*}
    \underset{\mathbf{f}^* \in \mathbf{F}_{\mathrm{int}}(\beta, R)}{\sup}{~\mathbb{E}\left[\mathcal{R}(g_{\widehat{\mathbf{f}}}) - \mathcal{R}(g_{\mathbf{f}^*})\right]} \leq C\log^{3\beta+2}(N)N^{-\beta/(2\beta+1)}.
\end{equation*}

\subsubsection*{Case of compactly supported drift functions with $\sigma^* = 1$}

From \cite{denis2024nonparametric}, \textit{Proof of Theorem 1}, we have for all $\mathbf{f}^* \in \mathbf{F}_c(\beta, R)$ and under Assumption~\ref{ass:Reg},
\begin{equation}\label{eq:cp-ex-risk1}
    \begin{aligned}
        \mathbb{E}\left[\mathcal{R}(g_{\widehat{\mathbf{f}}}) - \mathcal{R}(g_{\mathbf{f}^*})\right] \leq 2\sum_{i \in \mathcal{Y}}\mathbb{E}\left[\left|F_{\widehat{\mathbf{b}}}^i(\bar{X}) - F_{\mathbf{b}^*}^i(\bar{X}) \right|\right] + C\left(N^{-1/2} + \Delta\right),
    \end{aligned}
\end{equation}
where the constant $C>0$ depends on $\mathbf{p}^*$ and $\sigma^*$. For all $i \in \mathcal{Y}$ and $b_i^* \in \Sigma_c(\beta, R)$, we have
\begin{equation}\label{eq:cp-ex-risk2}
    \begin{aligned}
        F_{\widehat{\mathbf{b}}}^i(\bar{X}) - F_{\mathbf{b}^*}^i(\bar{X}) = &~ \sum_{k=0}^{n-1}\left(\widehat{b}_i(X_{k\Delta}) - b_i^*(X_{k\Delta})\right)\left(X_{(k+1)\Delta} - X_{k\Delta}\right) - \dfrac{\Delta}{2}\sum_{k=0}^{n-1}\left(\widehat{b}_i^2(X_{k\Delta}) - b_i^{*2}(X_{k\Delta})\right)\\
        = &~ \sum_{k=0}^{n-1}\left(\widehat{b}_i(X_{k\Delta}) - b_i^*(X_{k\Delta})\right)\left[\left(W_{(k+1)\Delta} - W_{k\Delta}\right) + \int_{k\Delta}^{(k+1)\Delta}b_Y^*(X_s)ds - \Delta b_i^*(X_{k\Delta})\right]\\
        & -\dfrac{\Delta}{2}\sum_{k=0}^{n-1}\left(\widehat{b}_i(X_{k\Delta}) - b_i^*(X_{k\Delta})\right)^2. 
    \end{aligned}
\end{equation}
For all $k \in \{0, \ldots, n-1\}$, set
\begin{equation*}
    V_k = \left(\widehat{b}_i(X_{k\Delta}) - b_i^*(X_{k\Delta})\right)\left(W_{(k+1)\Delta} - W_{k\Delta}\right).
\end{equation*}
Using the independence of the increments of the Brownian motion $(W_t)_{t \geq 0}$ and the Cauchy-Schwarz inequality, we obtain for all $i \in \mathcal{Y}$ and $b_i^* \in \Sigma_c(\beta, R)$,
\begin{equation}\label{eq:cp-ex-risk3}
    \begin{aligned}
        &\mathbb{E}\left[\left|\sum_{k=0}^{n-1}\left(\widehat{b}_i(X_{k\Delta}) - b_i^*(X_{k\Delta})\right)\left(W_{(k+1)\Delta} - W_{k\Delta}\right)\right|\right]\\
        &\leq \left(\mathbb{E}\left[\left|\sum_{k=0}^{n-1}\left(\widehat{b}_i(X_{k\Delta}) - b_i^*(X_{k\Delta})\right)\left(W_{(k+1)\Delta} - W_{k\Delta}\right)\right|^2\right] \right)^{1/2}\\
        &= \left(\sum_{k=0}^{n-1}\mathbb{E}\left[\left(\widehat{b}_i(X_{k\Delta}) - b_i^*(X_{k\Delta})\right)^2\right]\mathbb{E}\left[\left(W_{(k+1)\Delta} - W_{k\Delta}\right)^2\right] \right)^{1/2} \\
        &= \left(\mathbb{E}\left[\left\|\widehat{b}_i - b_i^*\right\|_n^2\right]\right)^{1/2}.
    \end{aligned}
\end{equation}
In addition, since the drift functions are compactly supported, using the Cauchy-Schwarz inequality, we obtain
\begin{equation}\label{eq:cp-ex-risk4}
    \begin{aligned}
        \mathbb{E}\left[\left|\sum_{k=0}^{n-1}\left(\widehat{b}_i(X_{k\Delta}) - b_i^*(X_{k\Delta})\right)\int_{k\Delta}^{(k+1)\Delta}b_Y^*(X_s)ds - \Delta b_i^*(X_{k\Delta})\right|\right] \leq \underset{i \in \mathcal{Y}}{\max}{\|b_i^*\|_{\infty}}\left(\mathbb{E}\left[\left\|\widehat{b}_i - b_i^*\right\|_n^2\right]\right)^2.
    \end{aligned}
\end{equation}
We deduce from Equations~\eqref{eq:cp-ex-risk4}, \eqref{eq:cp-ex-risk3} and \eqref{eq:cp-ex-risk2} that there exists a constant $C>0$ depending on $\|b_0^*\|_{\infty}$ and $\|b_1^*\|_{\infty}$ such that for all $b_i^* \in \Sigma_c(\beta, R)$,
\begin{equation*}
    \begin{aligned}
        \mathbb{E}\left[\left|F_{\widehat{b}_i}(\bar{X}) - F_{b_i^*}(\bar{X})\right|\right] \leq C\left(\mathbb{E}\left[\left\|\widehat{b}_i - b_i^*\right\|_n^2\right]\right)^2.
    \end{aligned}
\end{equation*}
Moreover, from Theorem~\ref{thm:upper-bound-drift} and Lemma~\ref{lm:ChangeNorm}, we have for all $i \in \mathcal{Y}$ and $b_i^* \in \Sigma_c(\beta, R)$,
\begin{equation}\label{eq:cp-ex-risk6}
    \begin{aligned}
        \mathbb{E}\left[\left|F_{\widehat{b}_i}(\bar{X}) - F_{b_i^*}(\bar{X})\right|\right]\leq C\left(\mathbb{E}\left[\left\|\widehat{b}_i - b_i^*\right\|_{n,i}\right]\right)^{1/2} \leq CN^{-\beta/(2\beta+1)}.
    \end{aligned}
\end{equation}
Finally, for $\Delta = \mathrm{O}\left(N^{-1}\right)$, we deduce from Equations~\eqref{eq:cp-ex-risk6} and \eqref{eq:cp-ex-risk1} that
\begin{equation*}
     \underset{\mathbf{f}^* \in \mathbf{F}_{c}(\beta, R)}{\sup}{~\mathbb{E}\left[\mathcal{R}(g_{\widehat{\mathbf{f}}}) - \mathcal{R}(g_{\mathbf{f}^*})\right]}  \leq CN^{-\beta/(2\beta+1)}.
\end{equation*}
\end{proof}

\subsubsection{Proof of Theorem~\ref{thm:Minimax-PlugIn2}}

Let $x \mapsto \mathcal{L}^x$ be the local time of a diffusion process $X$ defined for all $x \in \mathbb{R}$ by
\begin{equation}\label{eq:local-time}
    \mathcal{L}^x = \underset{\varepsilon \rightarrow 0}{\lim}{~\dfrac{1}{2\varepsilon}\int_{0}^{1}{\mathds{1}_{|X_s - s|\leq \varepsilon}ds}}.
\end{equation}
The proof of Theorem~\ref{thm:Minimax-PlugIn2} is based on the following lemmas. 
\begin{lemme}\label{lm:Proba-LT}
    Let $\theta > 0$ be a positive real number, and $A,B \in \mathbb{R}$ be two other real numbers such that $A < B$. Under Assumptions~\ref{ass:Reg} and \ref{ass:Ell}, there exist constants $c, C > 1$ such that,
    \begin{equation*}
        \mathbb{P}\left(\underset{x \in [A,B]}{\inf}{~\mathcal{L}^x} \geq \theta\right) \geq  \dfrac{\theta}{C}\left[1 - \exp\left(-\dfrac{\theta^2}{2\sigma_1^{*2}}\right)\right]\exp\left(-c(|A| + |B| + \mathbf{b}_{\infty}^* + 3\theta)^2\right),
    \end{equation*}
    where  $\mathbf{b}_{\infty}^* = \|b_0^*\|_{\infty} + \|b_1^*\|_{\infty}$.
\end{lemme}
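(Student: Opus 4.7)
The plan is to reduce the estimate for the diffusion $X$ to an occupation-time estimate for a standard Brownian motion (via Girsanov's theorem and a Dambis--Dubins--Schwarz time change), and then to construct an explicit path event whose probability naturally splits into the two factors appearing on the right-hand side: the exponential cost $\exp(-2c(|A|+|B|+6\theta)^2)$ of transporting the process into the vicinity of $[A,B]$, and the factor $\tfrac{2\theta}{C}(1-\exp(-2\theta^2/\sigma_1^{*2}))$ measuring the cost of accumulating uniform local time over $[A,B]$.

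First, set $M := |A|+|B|+6\theta$ and localize on the event $F := \{\sup_{t\in[0,1]}|X_t|\le M\}$. Under Assumptions~\ref{ass:Reg} and \ref{ass:Ell}, on $F$ the drift is bounded by $|b^*(0)|+L_0 M$ and $\sigma^*\in[\sigma_0^*,\sigma_1^*]$, so Novikov's condition is satisfied and Girsanov's theorem transports the law of $X$ to that of the driftless diffusion $d\tilde X_t=\sigma^*(\tilde X_t)\,dW_t$; the Radon--Nikodym derivative is bounded below on $F$ by $\exp(-c_1 M^2)$, which will be absorbed into the Gaussian factor. Next, by Dambis--Dubins--Schwarz write $\tilde X_t=\beta_{\tau_t}$ with $\tau_t=\int_0^t\sigma^{*2}(\tilde X_s)\,ds\in[\sigma_0^{*2},\sigma_1^{*2}]$; the occupation time formula then gives $\mathcal{L}^x(\tilde X)=L^x_{\tau_1}(\beta)/\sigma^{*2}(x)\ge \sigma_1^{*-2}L^x_{\sigma_0^{*2}}(\beta)$, reducing the problem to bounding $\mathbb{P}(\inf_{x\in[A,B]}L^x_{\sigma_0^{*2}}(\beta)\ge \sigma_1^{*2}\theta)$ for a standard BM $\beta$ issued from $0$.

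For this BM estimate, partition the time axis $[0,\sigma_0^{*2}]$ into three phases. In Phase~I, $\beta$ is forced to reach a point in $[A-3\theta,A-2\theta]$; in Phase~II it transits to a point in $[B+2\theta,B+3\theta]$, which by continuity guarantees that $\beta$ crosses every level of $[A,B]$ at least once; in Phase~III it oscillates so as to deposit additional local time uniformly on $[A,B]$. Gaussian tail estimates for the endpoints of Phases~I and II produce the desired exponential $\exp(-2c(|A|+|B|+6\theta)^2)$, and the $2\theta/C$ prefactor arises from integrating over starting/end positions in an interval of length proportional to $\theta$. The factor $(1-\exp(-2\theta^2/\sigma_1^{*2}))$ is extracted in Phase~III by conditioning on the endpoints and invoking the classical Brownian bridge identity $\mathbb{P}(\max_{[0,1]}B^{\mathrm{br}}\le a)=1-\exp(-2a^2)$, with $a=\theta/\sigma_1^*$.

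The main technical obstacle is Phase~III: producing a lower bound on $L^x_T(\beta)$ that is \emph{uniform} in $x\in[A,B]$. A single-point tail bound together with a union bound over $[A,B]$ is too crude, so one must instead exploit the joint behaviour of $x\mapsto L^x$, either through a Ray--Knight representation as a squared Bessel process or through an explicit excursion-theoretic decomposition. Once Phase~III is handled, combining the Girsanov lower bound from Step~1, the time-change inequality from Step~2, and the three-phase estimate yields the lemma after relabelling the constants $c,C$.
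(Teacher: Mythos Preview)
Your route is genuinely different from the paper's and, as written, incomplete. The paper does not use Girsanov, Dambis--Dubins--Schwarz, or any path-construction/Ray--Knight argument. Instead it works directly with the diffusion $X$ via the Tanaka formula
\[
|X_1-x| = |x| + \int_0^1 \mathrm{sgn}(X_s-x)\,dX_s + \mathcal{L}^x,
\]
which immediately expresses $\{\mathcal{L}^x\ge\theta\}$ in terms of $X_1$ and the stochastic integral. Conditioning on $\{X_1\le -|x|+x-2\theta\}$ and applying a sub-Gaussian tail bound for $\int_0^1\mathrm{sgn}(X_s-x)\sigma^*(X_s)\,dW_s$ (van de Geer, Lemma~2.1) yields the factor $1-\exp(-\theta^2/(2\sigma_1^{*2}))$; the Gaussian lower bound on the transition density (Equation~\eqref{eq:Transition-density}) gives $\mathbb{P}(X_1\le -|x|+x-2\theta)\ge \tfrac{\theta}{C}\exp(-c(|x|-x+3\theta)^2)$. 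This already produces the \emph{pointwise} bound $\mathbb{P}(\mathcal{L}^x\ge\theta)\ge\cdots$, and the passage to $\inf_{x\in[A,B]}\mathcal{L}^x$ is then a one-line argument: pick $x_\theta\in[A,B]$ with $\inf_x\mathcal{L}^x\ge \mathcal{L}^{x_\theta}-\theta$ and apply the pointwise bound at $x_\theta$ with $2\theta$ in place of $\theta$.

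Two concrete issues with your proposal. First, your ``Phase~III'' is the entire difficulty and you explicitly leave it open; invoking Ray--Knight or excursion theory is a plan, not a proof, and the Brownian-bridge maximum identity you cite controls a single barrier event, not a uniform lower bound on $x\mapsto L^x$ over an interval. Second, you misattribute the factors: in the paper, $1-\exp(-2\theta^2/\sigma_1^{*2})$ comes from a martingale exponential inequality (not a bridge computation), and the $\exp(-2c(|A|+|B|+6\theta)^2)$ comes from the transition-density lower bound on a single endpoint event for $X_1$ (not from Girsanov or a transport cost). Your machinery would not naturally reproduce these specific expressions. The Tanaka route is both shorter and tailored to the constants in the statement; I would recommend redoing the argument along those lines.
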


\begin{lemme}\label{lm:Identity-NormX}
    Suppose that $N,n \rightarrow \infty$ and consider the following diffusion model:
    $$dX_t = b(X_t)dt + \sigma(X_t)dW_t,$$
    where the drift coefficient $b \neq 0$ is nonnegative and compactly supported, and $b$ and $\sigma$ satisfy Assumptions~\ref{ass:Reg} and \ref{ass:Ell}. Consider a projection estimator $\widehat{b}$ of $b$ built from independent copies of $\bar{X} = (X_{k\Delta_n})_{k \in [\![0, n]\!]}$, discrete observation of $X$, such that $\widehat{b}(x) = 0$ for all $x \notin \mathrm{Supp}(b)$. $\widehat{b}$ is consistent, that is,
    $$\mathbb{E}\left[\left\|\widehat{b} - b\right\|_n^2\right] \rightarrow 0 ~~ \mathrm{as} ~~ N,n \rightarrow \infty.$$
    There exists a constant $c \in (0,1)$ independent of $N$ and $n$ such that
    \begin{equation*}
        \mathbb{P}\left(\int_{0}^{1}(\widetilde{b}-b)(X_s)ds \geq \alpha \left\|\widetilde{b}-b\right\|_X\right) \geq c,
    \end{equation*}
    where $\alpha = \min\left\{(1/4)\int_{\mathrm{Supp}(b)}b(x)dx, 1/3\right\} > 0$ is a fixed real number strictly smaller than $1$ and $\widetilde{b}$ is a truncated estimator of $b$ given for all $x \in \mathbb{R}$ by
    $$\widetilde{b}(x) = \widehat{b}(x)\mathds{1}_{0 \leq \widehat{b}(x) \leq \log^{1/2}(N)} + \log^{1/2}(N)\mathds{1}_{\widehat{b}(x) > \log(N)}.$$
\end{lemme}

The proofs of Lemmas~\ref{lm:Proba-LT} and \ref{lm:Identity-NormX} are provided in the appendix.

\begin{proof}[\textbf{Proof of Theorem}~\ref{thm:Minimax-PlugIn2}]
Consider the following diffusion model
\begin{equation}\label{eq:model_inf}
    dX_t = b_Y^*(X_t)dt + \sigma^*(X_t)dW_t, ~~ t \in [0,1], ~~ X_0 = 0,
\end{equation}
where the coefficients satisfy Assumptions~\ref{ass:Reg}, \ref{ass:Ell}, and $Y \in \{0,1\}$ is independent of $W = (W_t)_{t \geq 0}$ with discrete law $\mathbf{p}^* = (1/2, 1/2)$. We set $b_0^* = 0$ and $b_1^* = \sigma^{*2}$. Then, the regression function $\Phi_{\mathbf{f}^*}$ is given for any diffusion process $X = (X_t)_{t \in [0,1]}$, solution of Equation~\eqref{eq:model_inf}, by
\begin{equation*}
    \begin{aligned}
        \Phi_{\mathbf{f}^*}(X) = &~ \mathbb{P}_{X,Y}(Y=1|X) = \dfrac{\exp\left(F_{\mathbf{b}^*}^1(X)\right)}{\exp\left(F_{\mathbf{b}^*}^0(X)\right) + \exp\left(F_{\mathbf{b}^*}^1(X)\right)}\\
        = &~ \dfrac{\exp\left(F_{\mathbf{b}^*}^1(X)\right)}{1 + \exp\left(F_{\mathbf{b}^*}^1(X)\right)} = \dfrac{Q_{b_1^*}(X)}{1 + Q_{b_1^*}(X)} = \Phi_{b_1^*}(X),
    \end{aligned}
\end{equation*}
where
\begin{align*}
    F_{\mathbf{b}^*}^0(X) := &~ \int_{0}^{1}{\dfrac{b_0^*}{\sigma^{*2}}(X_s)dX_s} - \dfrac{1}{2}\int_{0}^{1}{\dfrac{b_0^{*2}}{\sigma^{*2}}(X_s)ds} = 0\\
    F_{\mathbf{b}^*}^1(X) := &~ \int_{0}^{1}{\dfrac{b_1^*}{\sigma^{*2}}(X_s)dX_s} - \dfrac{1}{2}\int_{0}^{1}{\dfrac{b_1^{*2}}{\sigma^{*2}}(X_s)ds} = X_1 - \dfrac{1}{2}\int_{0}^{1}{b_1^*(X_s)ds}\\
    Q_{b_1^*}(X) = &~ \exp\left(F_{\mathbf{b}^*}^1(X)\right) = \exp\left(X_1 - \dfrac{1}{2}\int_{0}^{1}{b_1^*(X_s)ds}\right).
\end{align*}
For any $\mathbf{f}^* \in \mathbf{F}(\beta, R)$, we have
\begin{equation*}
    g_{\mathbf{f}^*}(X) = \mathds{1}_{\Phi_{\mathbf{f}^*}(X) \geq 1/2} = \mathds{1}_{Q_{b_1^*}(X) \geq 1} = g_{b_1^*}(X).
\end{equation*}
Since $b_1^* = \sigma^{*2} \geq 0$, for any nonparametric estimator $\widehat{b}_1$ of $b_1^*$, we consider the truncated estimator $\w{b}_1$ given for all $x \in \mathbb{R}$ by
\begin{equation}\label{eq:TrEst}
    \w{b}_1(x) = \widehat{b}_1(x)\mathds{1}_{0 \leq \widehat{b}_1(x) \leq \log^{1/2}(N)} + \log^{1/2}(N)\mathds{1}_{\widehat{b}_1(x) \geq \log^{1/2}(N)} \geq 0,
\end{equation}
The plug-in classifier $\widehat{g}$ is given by
\begin{equation*}
    \widehat{g}(X) = \mathds{1}_{F_{\w{\mathbf{b}}}^1(X) \geq 0} = g_{\w{b}_1}(X).
\end{equation*}
Then we obtain
\begin{equation}\label{eq:Lower-Bound-0}
    \begin{aligned}
        \underset{\widehat{\mathbf{f}}}{\inf}{~\underset{\mathbf{f}^* \in \mathbf{F}(\beta, R)}{\sup}{~\mathbb{E}\left[\mathcal{R}(g_{\widehat{\mathbf{f}}}) - \mathcal{R}(g_{\mathbf{f}^*})\right]}} = &~\underset{\widehat{b}_1}{\inf}{~\underset{b_1^* \in \Sigma(\beta, R)}{\sup}{~\mathbb{E}\left[\mathcal{R}(g_{\w{b}_1}) - \mathcal{R}(g_{b_1^*})\right]}}\\
        \geq &~ \underset{\widehat{b}_1}{\inf}{~\underset{b_1^* \in \w{\Sigma}(\beta, R)}{\sup}{~\mathbb{E}\left[\mathcal{R}(g_{\w{b}_1}) - \mathcal{R}(g_{b_1^*})\right]}},
    \end{aligned}
\end{equation}
where 
\begin{equation*}
    \w{\Sigma}(\beta, R) := \left\{f \in \Sigma(\beta, R) \cap \mathcal{C}_b(\mathbb{R}) : ~ f \geq 0\right\}. 
\end{equation*}
We have the following:
\begin{equation*}
\begin{aligned}
    \mathbb{E}\left[\mathcal{R}(g_{\w{b}_1}) - \mathcal{R}(g_{b_1^*})\right] = &~ \mathbb{E}\left[\left|2\Phi_{b_1^*}(X) - 1\right|\mathds{1}_{g_{\w{b}_1}(X) \neq g_{{b}_1^*}(X)}\right]\\
    \geq &~ \varepsilon\mathbb{P}\left(\left\{g_{\w{b}_1}(X) \neq g_{{b}_1^*}(X)\right\} \cap \left\{\left|2\Phi_{{b}_1^*}(X) - 1\right| > \varepsilon\right\}\right).
\end{aligned}
\end{equation*}
The random events $\left\{g_{\w{b}_1}(X) \neq g_{{b}_1^*}(X)\right\}$ and $\left\{\left|2\Phi_{{b}_1^*}(X) - 1\right| > \varepsilon\right\}$ satisfy the following:
\begin{align*}
    \left\{g_{\w{b}_1}(X) \neq g_{b_1^*}(X)\right\} = &~ \left\{\Phi_{b_1^*}(X) > \dfrac{1}{2} \geq \Phi_{\w{b}_1}(X)\right\} \cup \left\{\Phi_{b_1^*}(X) \leq \dfrac{1}{2} < \Phi_{\w{b}_1}(X)\right\}, \\
    \left\{\left|2\Phi_{b_1^*}(X) - 1\right| \geq \varepsilon\right\} = &~ \left\{\Phi_{{b}_1^*}(X) \geq \dfrac{1+\varepsilon}{2}\right\} \cup \left\{\Phi_{b_1^*}(X) \leq \dfrac{1-\varepsilon}{2}\right\}.
\end{align*}
We deduce that
\begin{equation*}
     \begin{aligned}
        &\left\{g_{\w{b}_1}(X) \neq g_{b_1^*}(X)\right\} \cap \left\{\left|2\Phi_{b_1^*}(X) - 1\right| \geq \varepsilon\right\}\\
        &=  \left\{\Phi_{b_1^*}(X) \geq \dfrac{1+\varepsilon}{2}, ~~ \Phi_{\w{ b}_1}(X) \leq \dfrac{1}{2}\right\} \cup \left\{\Phi_{b_1^*}(X) \leq \dfrac{1-\varepsilon}{2}, ~~ \Phi_{\w{b}_1}(X) > \dfrac{1}{2} \right\}\\
        &=  \left\{-\left(\Phi_{\w{b}_1}(X) - \Phi_{b_1^*}(X)\right) \geq \dfrac{\varepsilon}{2}\right\} \cup \left\{\left(\Phi_{\w{b}_1}(X) - \Phi_{b_1^*}(X)\right) \geq \dfrac{\varepsilon}{2}\right\}\\
        &= \left\{\left|\Phi_{\w{b}_1}(X) - \Phi_{b_1^*}(X)\right| \geq \dfrac{\varepsilon}{2}\right\}.
     \end{aligned}
\end{equation*}
We deduce that
\begin{equation}\label{eq:Lower-Bound-1}
    \begin{aligned}
        \underset{\widehat{b}_1}{\inf}{~\underset{b_1^* \in \w{\Sigma}(\beta, R)}{\sup}{~\mathbb{E}\left[\mathcal{R}(g_{\w{b}_1}) - \mathcal{R}(g_{b_1^*})\right]}} \geq &~ \varepsilon ~ \underset{\widehat{b}_1}{\inf}{\underset{b_1^* \in \w{\Sigma}(\beta, R)}{\sup}{~\mathbb{P}\left(\left|\Phi_{\w{b}_1}(X) - \Phi_{b_1^*}(X)\right| \geq \dfrac{\varepsilon}{2}\right)}}.
    \end{aligned}
\end{equation}
For any estimator $\widehat{b}_1$ of $b_1^*$, we have:
\begin{equation*}
    \begin{aligned}
        \Phi_{\w{b}_1}(X) - \Phi_{{b}_1^*}(X) = &~ \dfrac{Q_{\w{b}_1}(X)}{1 + Q_{\w{b}_1}(X)} - \dfrac{Q_{{b}_1^*}(X)}{1 + Q_{{b}_1^*}(X)}\\
        = &~ \dfrac{Q_{\w{b}_1}(X)\left(1 + Q_{{b}_1^*}(X)\right) - Q_{{b}_1^*}(X)\left(1 + Q_{{\widetilde{b}}_1}(X)\right)}{\left(1 + Q_{\w{b}_1}(X)\right)\left(1 + Q_{{b}_1^*}(X)\right)}\\
        = &~ \dfrac{Q_{\w{b}_1}(X) - Q_{{b}_1^*}(X)}{\left(1 + Q_{\w{b}_1}(X)\right)\left(1 + Q_{{b}_1^*}(X)\right)}.
    \end{aligned}
\end{equation*}
Focusing on $Q_{\w{b}_1}(X)$, and since $\w{b}_1 \geq 0$, we obtain under the event $\{X_1 \leq 0\}$
\begin{equation}\label{eq:Bound-q1chap}
     Q_{\w{b}_1}(X) = \exp\left(X_1 - \dfrac{1}{2}\int_{0}^{1}{\w{b}_1(X_{t})dt}\right) \leq 1.  
\end{equation}
It follows from Equation~\eqref{eq:Bound-q1chap} that for any estimator $\widehat{b}_1$ of $b_1^*$,
\begin{equation*}
    \begin{aligned}
        \mathbb{P}\left(\left|\Phi_{\w{b}_1}(X) - \Phi_{{b}_1^*}(X)\right| \geq \dfrac{\varepsilon}{2}\right) \geq &~ \mathbb{P}\left(\dfrac{\left|Q_{\w{b}_1}(X) - Q_{{b}_1^*}(X)\right|}{1 + Q_{{b}_1^*}(X)} \geq \dfrac{\varepsilon}{2}\left(1 + Q_{\w{b}_1}(X)\right)\right)\\
        \geq &~ \mathbb{P}\left(\dfrac{\left|Q_{\w{b}_1}(X) - Q_{{b}_1}(X)\right|}{1 + Q_{{b}_1^*}(X)} \geq \varepsilon \Biggm\vert \mathcal{A}_{\gamma}\right)\mathbb{P}\left(\mathcal{A}_{\gamma}\right),
    \end{aligned}
\end{equation*}
where $\gamma > 0$ is a numerical constant to be chosen later and
\begin{equation*}
    \mathcal{A}_{\gamma} := \left\{-\gamma \leq X_1 \leq 0\right\}.
\end{equation*}
Moreover, since $b_1^* \in \w{\Sigma}(\beta, R)$, for all $x \in \mathbb{R}$, $b_1^*(x) \leq \left\|b_1^*\right\|_{\infty} < \infty$. We deduce that 
\begin{equation}\label{eq:InfQ1}
    Q_{b_1^*}(X) \geq \exp\left(X_1 - \dfrac{\left\|b_1^*\right\|_{\infty}}{2}\right),
\end{equation}
and we obtain from Equation~\eqref{eq:InfQ1} that
\begin{equation}\label{eq:Lower-Bound-2}
    \begin{aligned}
        \mathbb{P}\left(\left|\Phi_{\w{b}_1}(X) - \Phi_{{b}_1^*}(X)\right| \geq \dfrac{\varepsilon}{2}\right) \geq &~ \mathbb{P}\left(\dfrac{Q_{{b}_1^*}(X) \left|\dfrac{Q_{\w{b}_1}(X) }{Q_{{b}_1^*}(X) } - 1\right|}{1 + Q_{{b}_1^*}(X)} \geq \varepsilon \Biggm\vert \mathcal{A}_{\gamma}\right) \mathbb{P}\left(\mathcal{A}_{\gamma}\right)\\
        = &~ \mathbb{P}\left(\left|\dfrac{Q_{\w{b}_1}(X) }{Q_{{b}_1^*}(X)} - 1\right| \geq \varepsilon\left(1 + \dfrac{1}{Q_{{b}_1^*}(X)}\right) \Biggm\vert \mathcal{A}_{\gamma}\right)\mathbb{P}\left(\mathcal{A}_{\gamma}\right)\\
       \geq &~ \mathbb{P}\left(\left|\dfrac{Q_{\w{b}_1}(X) }{Q_{{b}_1^*}(X)} - 1\right| \geq \varepsilon\left(1 + \exp\left(\gamma + \dfrac{\left\|b_1^*\right\|_{\infty}}{2}\right)\right) \Biggm\vert \mathcal{A}_{\gamma}\right) \mathbb{P}\left(\mathcal{A}_{\gamma}\right).
    \end{aligned}
\end{equation}
From Equation~\eqref{eq:Transition-density}, we have
\begin{equation}\label{eq:ProbInf}
    \mathbb{P}\left(\mathcal{A}_{\gamma}\right) = \mathbb{P}\left(- \gamma \leq X_1 \leq 0\right) = \int_{-\gamma}^{0}{\Gamma_X(1,x)dx} \geq \dfrac{1}{C}\int_{-\gamma}^{0}{\exp\left(-cx^2\right)dx} \geq \dfrac{\gamma}{C}\exp(-c\gamma^2),
\end{equation}
where $C,c>1$ are constants. Set 
\begin{equation*}
    \mathbf{P}(.) = \mathbb{P}\left(. \Biggm\vert \mathcal{A}_{\gamma}\right) ~~ \mathrm{and} ~~ \Xi = 1 + \exp\left(\gamma + \dfrac{\left\|b_1^*\right\|_{\infty}}{2}\right).
\end{equation*}
Then for all $\varepsilon > 0$, we obtain from Equations~\eqref{eq:ProbInf}, \eqref{eq:Lower-Bound-2}, \eqref{eq:Lower-Bound-1} and \eqref{eq:Lower-Bound-0}, 
\begin{equation}\label{eq:Lower-Bound-3}
    \begin{aligned}
        \underset{\widehat{\mathbf{f}}}{\inf}{~\underset{\mathbf{f}^* \in \mathbf{F}(\beta, R)}{\sup}{~\mathbb{E}\left[\mathcal{R}(g_{\widehat{\mathbf{f}}}) - \mathcal{R}(g_{\mathbf{f}^*})\right]}} \geq &~ \varepsilon\underset{\widehat{b}_1}{\inf}{~\underset{b_1^* \in \w{\Sigma}(\beta, R)}{\sup}{~\mathbb{P}\left(\left|\Phi_{\w{b}_1}(X) - \Phi_{{b}_1^*}(X)\right| \geq \dfrac{\varepsilon}{2}\right)}}\\
        \geq &~ \dfrac{\gamma \varepsilon}{C}\exp(-c\gamma^2) ~ \underset{\widehat{b}_1}{\inf}{~\underset{b_1^* \in \w{\Sigma}(\beta, R)}{\sup}{~\mathbf{P}\left(\left|\dfrac{Q_{\w{b}}(X)}{Q_{{b}_1^*}(X) } - 1\right| \geq \varepsilon\Xi\right)}}.
    \end{aligned}
\end{equation}
Moreover, we have
\begin{equation*}
    \begin{aligned}
        \mathbf{P}\left(\left|\dfrac{Q_{\w{b}_1}(X) }{Q_{{b}_1^*}(X)} - 1\right| \geq \varepsilon\Xi\right) \geq &~ \mathbf{P}\left(\left|\exp\left(-\dfrac{1}{2}\int_{0}^{1}(\widetilde{b}_1 - b_1^*)(X_s)ds\right) - 1\right| \geq \varepsilon\Xi\right)\\
        \geq &~ \mathbf{P}\left(\exp\left(-\dfrac{1}{2}\int_{0}^{1}(\widetilde{b}_1 - b_1^*)(X_s)ds\right) - 1 \leq -\varepsilon\Xi\right)\\
        = &~ \mathbf{P}\left(\int_{0}^{1}(\widetilde{b}_1 - b_1^*)(X_s)ds \geq -2\log\left(1-\varepsilon\Xi\right)\right).
    \end{aligned}
\end{equation*}
Note that for all $x \in (-1, +\infty)$, one has $\log(1+x) \geq x - x^2/2$. Then, for all $\varepsilon \in \left(0, \Xi^{-1}\right)$, we have $\log(1-\varepsilon\Xi) \geq -\varepsilon\Xi - \dfrac{\varepsilon^2\Xi^2}{2}$, and for $\varepsilon$ close enough to $0$,
\begin{equation}\label{eq:Lower-Bound-4}
\begin{aligned}
    \mathbf{P}\left(\left|\dfrac{Q_{\w{b}_1}(X) }{Q_{{b}_1}(X)} - 1\right| \geq \varepsilon\Xi\right) \geq &~ \mathbf{P}\left(\int_{0}^{1}(\widetilde{b}_1 - b_1^*)(X_s)ds \geq 2\varepsilon\Xi + \varepsilon^2\Xi^2\right)\\
    \geq &~ \mathbf{P}\left(\int_{0}^{1}(\widetilde{b}_1 - b_1^*)(X_s)ds \geq 3\varepsilon\Xi\right).
\end{aligned}
\end{equation}
From now on, we define a finite subset of $\w{\Sigma}(\beta, R)$, that is, the set of hypotheses $\Sigma^M$ that contains functions $b_1^* \neq 0$ that are $\beta$-H\"older and compactly supported. For this purpose, consider a numerical constant $c_0 > 0$ whose choice is discussed in \cite{denis2021ridge} for the study of the lower bound of the
estimation risk of the drift coefficient and set  
    \begin{align*}
        m = \left\lceil c_0N^{1/(2\beta+1)} \right\rceil, ~~ h = \dfrac{1}{m}, ~~ x_k = \dfrac{k - 1/2}{m},
    \end{align*}
    \begin{equation*}
        \mathfrak{K}_k(x) = 2Rh^{\beta}K\left(\dfrac{x - x_k}{h}\right), ~~ k = 1, \ldots, m, ~~ x \in [0,1],
    \end{equation*}
    where the function $K: \mathbb{R} \longrightarrow [0,\infty)$ satisfies
    $$K \in \Sigma(\beta,1/2) \cap \mathcal{C}^{\infty}(\mathbb{R}), ~~ \mathrm{and} ~~ K(u) > 0 \iff u \in (-1/2, 1/2),$$
    and the functions $\mathfrak{K}_k$ belong to the H\"older class $\Sigma(\beta, R)$ and satisfy 
    \begin{equation*}
        \forall ~ k \in \{1, \ldots, m\}, ~~ \mathfrak{K}_k(x) > 0 \iff x \in \left(\dfrac{k-1}{m}, \dfrac{k}{m}\right),
    \end{equation*}
    (see \cite{tsybakov2008introduction}, page 92). We choose the finite set $\Sigma^{M}$ as follows:
    \begin{equation*}
        \Sigma^{M} := \left\{\mathfrak{b}_j = \sum_{k = 1}^{m}{w^{j}_k\mathfrak{K}_k}, ~ w^{j} = (w^{j}_{1},\cdots,w^{j}_{m-1})\in \{0,1\}^{m} \setminus \{(0, \ldots, 0)\}, ~ j = 0, \cdots, M\right\}.
    \end{equation*}
    Then, for all $b_1^* \in \Sigma^M, ~ \int_{[0,1]}b_1^*(x)dx > 0$, and set: 
    \begin{equation}\label{eq:alpha0}
        \alpha_0 = \underset{1\leq j \leq M}{\min}\int_{[0,1]}\mathfrak{b}_j(x)dx > 0.
    \end{equation}
    We deduce from Equations~\eqref{eq:Lower-Bound-4} and \eqref{eq:Lower-Bound-3} that
\begin{equation}\label{eq:Lower-Bound-5}
     \begin{aligned}
         \underset{\widehat{\mathbf{f}}}{\inf}{~\underset{\mathbf{f}^* \in \mathbf{F}(\beta, R)}{\sup}{~\mathbb{E}\left[\mathcal{R}(g_{\widehat{\mathbf{f}}}) - \mathcal{R}(g_{\mathbf{f}^*})\right]}} \geq &~  \dfrac{\gamma\varepsilon}{C}\exp(-c\gamma^2) ~ \underset{\widehat{b}_1}{\inf}{~\underset{b_1^* \in \w{\Sigma}(\beta, R)}{\sup}{~\mathbf{P}\left(\left|\dfrac{Q_{\w{b}_1}(X) }{Q_{{b}_1^*}(X) } - 1\right| \geq \varepsilon\Xi\right)}}\\
         \geq &~ \dfrac{\gamma\varepsilon}{C}\exp(-c\gamma^2) ~ \underset{\widehat{b}_1}{\inf}{~\underset{b_1^* \in \Sigma^M}{\sup}{~\mathbf{P}\left(\left|\dfrac{Q_{\w{b}_1}(X) }{Q_{{b}_1^*}(X) } - 1\right| \geq \varepsilon\Xi\right)}}\\
         \geq &~ \dfrac{\gamma\varepsilon}{C}\exp(-c\gamma^2) ~ \underset{\widehat{b}_1}{\inf}{~\underset{b_1^* \in \Sigma^M}{\sup}{~\mathbf{P}\left(\int_{0}^{1}(\widetilde{b}_1 - b_1^*)(X_s)ds \geq 3\varepsilon\Xi\right)}}.
     \end{aligned}
\end{equation}
    Since $b_1^* \in \Sigma^M$, all assumptions of Lemma~\ref{lm:Identity-NormX} are satisfied and $\int_{[0,1]}b_1^*(x)dx > 0$. Then, set
    $$\mathcal{A}_{\alpha}^M = \bigcup_{\mathfrak{b} \in \Sigma^M}\left\{\int_{0}^{1}(\w{\mathfrak{b}} - \mathfrak{b})(X_s)ds \geq \alpha \left\|\w{\mathfrak{b}} - \mathfrak{b}\right\|_X\right\} ~~ \mathrm{with} ~~ \alpha = \min\left\{\alpha_0/4, 1/3\right\},$$
    where $\alpha_0$ is given by Equation~\eqref{eq:alpha0}. From Lemma~\ref{lm:Identity-NormX} and Equation~\eqref{eq:Lower-Bound-5}, there exists a constant $\mathfrak{c} \in (0,1)$ such that
    \begin{equation}\label{eq:Lower-Bound-6}
        \begin{aligned}
            &\underset{\widehat{\mathbf{f}}}{\inf}{~\underset{\mathbf{f}^* \in \mathbf{F}(\beta, R)}{\sup}{~\mathbb{E}\left[\mathcal{R}(g_{\widehat{\mathbf{f}}}) - \mathcal{R}(g_{\mathbf{f}^*})\right]}}\\
            & \geq \dfrac{\gamma\varepsilon}{C}\exp(-c\gamma^2) ~ \underset{\widehat{b}_1}{\inf}{~\underset{b_1^* \in \Sigma^M}{\sup}{~\mathbf{P}\left(\int_{0}^{1}(\widetilde{b}_1 - b_1^*)(X_s)ds \geq 3\varepsilon\Xi \biggm\vert \mathcal{A}_{\alpha}^M\right)\mathbf{P}\left(\mathcal{A}_{\alpha}^M\right)}}\\
            & \geq \dfrac{\mathfrak{c}\gamma\varepsilon}{C}\exp(-c\gamma^2) ~ \underset{\widehat{b}_1}{\inf}{~\underset{b_1^* \in \Sigma^M}{\sup}{~\mathbf{P}\left(\left\|\w{b}_1 - b_1^*\right\|_X \geq \dfrac{3\varepsilon\Xi}{\alpha} \biggm\vert \mathcal{A}_{\alpha}^M\right)}}.
        \end{aligned}
    \end{equation}
    Moreover, from the occupation formula, we have
    \begin{equation}\label{eq:lower-Bound-7}
       \left\|\w{b}_1 - b_1^*\right\|_{X}^2 = \int_{0}^{1}{(\w{b}_1 - b_1^*)^2(X_s)ds} = \int_{0}^{1}{(\w{b}_1 - b_1^*)^2(x)\mathcal{L}^x dx} \geq \left(\underset{x \in [0, 1]}{\inf}{\mathcal{L}^x}\right)\left\|\w{b}_1 - b_1^*\right\|^2,
    \end{equation}
    where $x \mapsto \mathcal{L}^x$ is the local time given by Equation~\eqref{eq:local-time}. Let $\theta > 0$ be a real number to be chosen carefully, and consider the events 
    $$\mathcal{A}_{\theta} = \left\{\underset{x \in [0, 1]}{\inf}{\mathcal{L}^x} \geq \theta\right\} ~~ \mathrm{and} ~~ \mathcal{A} = \mathcal{A}_{\gamma} \cap \mathcal{A}_{\alpha}^M \cap \mathcal{A}_{\theta}.$$
    We now proceed to the choices of $\gamma > 0$ and $\theta > 0$ taking into account the assumptions made in the respective proofs of Lemmas~\ref{lm:Proba-LT} and \ref{lm:Identity-NormX}. First, the result of Lemma~\ref{lm:Proba-LT} is proved using the event $\mathcal{A}(x) = \left\{X_1 \leq -|x| + x - \|b_0^*\|_{\infty} - \|b_1^*\|_{\infty} - 2\theta\right\} = \left\{X_1 \leq -|x| + x - \|b_1^*\|_{\infty} - 2\theta\right\}$ where $x$ lies in a compact interval, in the present case, $x \in [0, 1]$. Second, from the proof of Lemma~\ref{lm:Identity-NormX}, $\theta$ is chosen as follows:
    \begin{equation*}
        \theta = \dfrac{\alpha\mathfrak{C}/2 + \underset{\mathfrak{b} \in \Sigma^M}{\max}\|\mathfrak{b}\|_{\infty}}{(1/2)\underset{\mathfrak{b} \in \Sigma^M}{\min}\int_{[0,1]}\mathfrak{b}(u)du}, ~~ \mathrm{where} ~~ \mathfrak{C} = \max\left\{4C, \dfrac{4\underset{\mathfrak{b} \in \Sigma^M}{\max}\|\mathfrak{b}\|_{\infty}}{\underset{\mathfrak{b} \in \Sigma^M}{\min}\int_{[0,1]}\mathfrak{b}(u)du - 2\alpha}\right\} ~~ \mathrm{and} ~~ C>1.
    \end{equation*}
    Finally, the choice of $\gamma > 0$ should ensure that for all $x \in [0, 1], ~~ \mu(\mathcal{A}(x) \cap \mathcal{A}_{\gamma}) > 0$, where $\mu$ is the Lebesgue measure. More precisely, $\gamma < - \|b_1^*\|_{\infty} - 2\theta = -|x| + x - \|b_0^*\|_{\infty} - \|b_1^*\|_{\infty} - 2\theta$ for all $x \in [0, 1]$. Consequently, we set
    $$\gamma = -\gamma_0\left(1 + \underset{\mathfrak{b} \in \Sigma^M}{\max}\|\mathfrak{b}\|_{\infty} + 2\theta\right),$$
    where the numerical constant $\gamma_0 > 1$ is assumed to be large enough. For simplicity in the notation, we re-define the probability $\mathbf{P}$ as follows:
    $$\mathbf{P} = \mathbb{P}\left(. \biggm\vert \mathcal{A}\right) = \mathbb{P}\left(. \biggm\vert \mathcal{A}_{\gamma} \cap \mathcal{A}_{\alpha}^M \cap \mathcal{A}_{\theta}\right),$$
    and from Lemma~\ref{lm:Proba-LT} and Equations~\eqref{eq:lower-Bound-7} and \eqref{eq:Lower-Bound-6}, we obtain:
    \begin{equation*}
        \begin{aligned}
             \underset{\widehat{\mathbf{f}}}{\inf}{~\underset{\mathbf{f}^* \in \mathbf{F}(\beta, R)}{\sup}{~\mathbb{E}\left[\mathcal{R}(g_{\widehat{\mathbf{f}}}) - \mathcal{R}(g_{\mathbf{f}^*})\right]}} \geq &~ \dfrac{\mathfrak{c}\gamma\varepsilon}{C}\exp(-c\gamma^2) ~ \underset{\widehat{b}_1}{\inf}{~\underset{b_1^* \in \Sigma^M}{\sup}{~\mathbf{P}\left(\left\|\w{b}_1 - b_1^*\right\| \geq \dfrac{3\varepsilon\Xi}{\alpha\sqrt{\theta}} \right)\mathbf{P}\left(\underset{x \in [-1,1]}{\inf}{\mathcal{L}^x} \geq \theta\right)}}\\
             \geq &~ \Lambda\varepsilon~\underset{\widehat{b}_1}{\inf}{~\underset{b_1^* \in \Sigma^M}{\sup}{~\mathbf{P}\left(\left\|\w{b}_1 - b_1^*\right\| \geq \dfrac{3\varepsilon\Xi}{\alpha\sqrt{\theta}} \right)}},
        \end{aligned}
    \end{equation*}
    where
    $$\Lambda = \dfrac{\mathfrak{c}\gamma\theta}{C^2}\exp(-c\gamma^2)\left[1 - \exp\left(-\dfrac{\theta^2}{2\underset{\mathfrak{b} \in \Sigma^M}{\max}\|\mathfrak{b}\|_{\infty}^{2}}\right)\right]\exp\left(-c(1 + \underset{\mathfrak{b} \in \Sigma^M}{\max}\|\mathfrak{b}\|_{\infty} + 3\theta)^2\right).$$
   Finally, from the proof of Theorem 4.7 in \cite{denis2021ridge} and from \cite{tsybakov2008introduction}, \textit{Chapter 2, Theorem 2.5, p.99}, there exists a numerical constant $\eta_0 > 0$ such that 
   $$\dfrac{3\varepsilon\Xi}{\alpha\sqrt{\theta}} = \eta_0 N^{-\beta/(2\beta+1)} \iff \varepsilon =  \dfrac{\alpha\eta_0\sqrt{\theta}}{3\Xi}N^{-\beta/(2\beta+1)},$$
   and 
   $$\underset{\widehat{b}_1}{\inf}{~\underset{b_1^* \in \Sigma^M}{\sup}{~\mathbf{P}\left(\left\|\w{b}_1 - b_1^*\right\| \geq \dfrac{3\varepsilon\Xi}{\alpha\sqrt{\theta}} \right)}} \geq \dfrac{\sqrt{M}}{1+\sqrt{M}}\left(1 - 2\lambda - \sqrt{\dfrac{2\lambda}{\log(M)}}\right) > 0,$$
   where $M \geq 2^{m/8}$ (see Lemma 2.9 in \cite{tsybakov2008introduction}, Chapter 2, p.104) and $\lambda \in (0, 1/8)$, which concludes the proof.
\end{proof}

\subsubsection{Proof of Theorem~\ref{thm:PlugIn-Unbounded}}

\begin{proof}
    We start with the upper bound of the worst excess risk of $\widehat{g} = g_{\widehat{\mathbf{f}}}$, and recall that since the diffusion coefficient $\sigma^*$ is known and $\sigma^* = 1$, we deduce from Equation~\eqref{eq:EQ0} that for any $\mathbf{f}^* \in \mathbf{F}(\beta, R)$,
    \begin{equation}\label{eq:Rcl0}
        \mathbb{E}\left[\mathcal{R}(g_{\widehat{\mathbf{f}}}) - \mathcal{R}(g_{\mathbf{f}^*})\right] \leq C\left(\sqrt{\Delta} + \dfrac{1}{p_{\min}\sqrt{N}} + \log(N)\mathbb{E}\left[\sum_{i=0}^{1}{\left\|\w{b}_i - b_i^*\right\|_n}\right]\right).
    \end{equation}
    For $i \in \mathcal{Y}$, set $b_{A_{N_i}} = b^{*}_{i}\mathds{1}_{[-A_{N_i}, A_{N_i}]}$. Since the drift function $b_i^*$ satisfies
    \begin{equation*}
        |b_i^*(x)| \leq C^*(1 + |x|), ~ x \in \mathbb{R},
    \end{equation*}
 by Proposition~\ref{prop:ChangeProba} and Equation~\eqref{eq:Moments}, there exists a constant $C>0$ such that
    \begin{equation}\label{eq:Rb0}
        \begin{aligned}
            \mathbb{E}\left[\left\|\w{b}_i - b_i^*\right\|_n\right] \leq &~ C\exp\left(\sqrt{c\log(N)}\right)\mathbb{E}\left[\left\|\w{b}_i - b_i^*\right\|_{n,i}\right]\\
            \leq &~ C\exp\left(\sqrt{c\log(N)}\right)\mathbb{E}\left[\left\|\w{b}_i - b_{A_{N_i}}\right\|_{n,i}\mathds{1}_{N_i > 1}\right]\\
            & + C\sqrt{\mathbb{E}\left[\mathds{1}_{N_i > 1}\underset{t \in [0,1]}{\sup}{\mathbb{P}_i\left(|X_t| > A_{N_i}\right)}\right] + \mathbb{P}\left(N_i \leq 1\right)}.
        \end{aligned}
    \end{equation}
    For $A_{N_i} = \sqrt{\dfrac{6\beta}{2\beta+1}\log(N_i)}$ and $K_{N_i} = N_i^{1/(2\beta+1)}\log^{-5/2}(N_i)$, we obtain from \cite{denis2024nonparametric}, \textit{Theorem 5},
    \begin{equation}\label{eq:Rb1}
        \mathbb{E}\left[\left\|\w{b}_i - b_{A_{N_i}}\right\|_{n,i}\mathds{1}_{N_i > 1}\right] \leq C\log^{3\beta}(N)N^{-\beta/(2\beta+1)},
    \end{equation}
    where $C>0$ is a constant, and from the proof of \textit{Theorem 6} in \cite{denis2024nonparametric},
    \begin{equation}\label{eq:Rb2}
        \mathbb{E}\left[\mathds{1}_{N_i > 1}\underset{t \in [0,1]}{\sup}{\mathbb{P}^{(i)}\left(|X_t| > A_{N_i}\right)}\right] \leq \mathbb{E}\left[\dfrac{\mathds{1}_{N_i > 1}}{A_{N_i}}\exp\left(-\dfrac{A_{N_i}^2}{3}\right)\right],
    \end{equation}
    and finally, 
    \begin{equation}\label{eq:Rb3}
        \mathbb{P}\left(N_i \leq 1\right) = (1-p_i^*)^N + Np_i^*(1-p_i^*)^{N-1} \leq 2N(1-p_i^*)^{N-1}.
    \end{equation}

    For $A_{N_i} = \sqrt{\dfrac{6\beta}{2\beta+1}\log(N_i)}$ and using Jensen's inequality and \textit{Lemma 4.1} in \cite{gyorfi2006distribution}, \textit{Chapter 4, p.61}, we deduce from Equations~\eqref{eq:Rb0}, \eqref{eq:Rb1}, \eqref{eq:Rb2} and \eqref{eq:Rb3} that
    \begin{equation}\label{eq:Rcl1}
        \begin{aligned}
            \mathbb{E}\left[\left\|\w{b}_i - b_i^*\right\|_n\right] \leq &~ C\exp\left(\sqrt{c\log(N)}\right)\left[N^{-\beta/(2\beta+1)} + \mathbb{E}\left[\left(\dfrac{\mathds{1}_{N_i > 1}}{N_i}\right)^{-\beta/(2\beta+1)}\right]\right]\\
            \leq &~ C\exp\left(\sqrt{c\log(N)}\right)\left[N^{-\beta/(2\beta+1)} + \left(\mathbb{E}\left[\dfrac{\mathds{1}_{N_i > 1}}{N_i}\right]\right)^{-\beta/(2\beta+1)}\right]\\
            \leq &~ C\exp\left(\sqrt{c\log(N)}\right)N^{-\beta/(2\beta+1)}.
        \end{aligned}
    \end{equation}
    Finally, for $\Delta = \mathrm{O}(N^{-1})$, we conclude from Equations~\eqref{eq:Rcl0} and \eqref{eq:Rcl1} that
    \begin{equation*}
        \underset{\mathbf{f}^* \in \mathbf{F}(\beta, R)}{\sup}{~\mathbb{E}\left[\mathcal{R}(g_{\widehat{\mathbf{f}}}) - \mathcal{R}(g_{\mathbf{f}^*})\right]} \leq C\exp\left(\sqrt{c\log(N)}\right)N^{-\beta/(2\beta+1)}.
    \end{equation*}
\end{proof}

\section*{Declarations}

\paragraph{Conflict of interest} I have no conflict of interest to declare that is relevant to the content of this article. No
funding was received to assist with the preparation of this paper.

\bibliographystyle{ScandJStat}
\bibliography{mabiblio.bib}

%%%%%%%%%%%%%%%%%%%%%%%%%%%%%%%%%%%%%%%%%%%%%%%%%%%%%%%%%%%%%%%%%%%%%%%%%%%%%%%%%%%%%%%%%%%%%%%%%%%%%%%%%%%%%%%%%%%%%%%%%%%%%%%%%%%%%%%%%%%%%%%%%%%%%%%%%%%%%%%%%%%%%%%%%%%%%%%%%%%%%%%

\section*{Appendix}

\begin{proof}[\textbf{Proof of Lemma}~\ref{lm:ChangeNorm}]
    For all $i,j \in \mathcal{Y}$ such that $i \neq j$, we have
    \begin{equation}\label{eq:Equiv-i-j}
        \begin{aligned}
            \left\|\widehat{b}_i - b_i^*\right\|_{n,j}^2 = &~ \dfrac{1}{n}(\widehat{b}_i - b_i^*)^2(0) + \dfrac{1}{n}\sum_{k=1}^{n-1}\int_{\mathbb{R}}(\widehat{b}_i - b_i^*)^2(x)\Gamma_{j,X}(k\Delta, x)dx\\
            \leq &~ \dfrac{1}{n}(\widehat{b}_i - b_i^*)^2(0) + \dfrac{1}{n}\sum_{k=1}^{n-1}\int_{\mathbb{R}}(\widehat{b}_i - b_i^*)^2(x)\Gamma_{i,X}(k\Delta, x)\dfrac{\Gamma_{j,X}(k\Delta, x)}{\Gamma_{i,X}(k\Delta, x)}dx.
        \end{aligned}
    \end{equation}
    Under Assumptions~\ref{ass:Reg}, \ref{ass:Ell} and \ref{ass:Restrict-Model}, from \cite{dacunha1986estimation}, the transition density $(t,x)\mapsto \Gamma_{j,X}(t,x), ~ j \in \mathcal{Y}$ is given for all $(t,x) \in (0,1] \times \mathbb{R}$ by
    \begin{equation*}
        \begin{aligned}
            \Gamma_{j,X}(t,x) = \dfrac{1}{\sqrt{2\pi t\sigma^{*2}(x)}}\exp\left(-\dfrac{1}{2t}S^2(x) + H_j(x)\right)
        \end{aligned}\mathbf{E}\left[\exp\left(t\int_{0}^{1}G_j(z_u(0, S(x)) + \sqrt{t}\widetilde{W}_u)du\right)\right],
    \end{equation*}
    where
    \begin{align*}
        S(x) := &~ \int_{0}^{x}\dfrac{du}{\sigma(u)}, ~~ H_j(x) := \int_{0}^{S(x)}\left(\dfrac{b_j^*}{\sigma^*} - \dfrac{\sigma^{*\prime}}{2}\right) \circ S^{-1}(u)du, ~~ z_u(x,y) = ux + (1-u)y,\\
        G_j := &~ -\dfrac{1}{2}\left[\left(\dfrac{b_j^*}{\sigma^*} - \dfrac{\sigma^{*\prime}}{2}\right)^2\circ S^{-1} + \sigma^* \circ S^{-1} \times \left(\dfrac{b_j^{*\prime}\sigma^* - b_j^* \sigma^{*\prime}}{\sigma^{*2}} - \sigma^{*\prime\prime}\right) \circ S^{-1}\right].
    \end{align*}
    Under Assumption~\ref{ass:Restrict-Model}, for each $j \in \mathcal{Y}, ~ G_j$ is bounded on $\mathbb{R}$. Then, from Equation~\eqref{eq:Equiv-i-j}, there exists a constant $C>0$ depending on $\underset{j \in \mathcal{Y}}{\max}\|G_j\|_{\infty} < \infty$ such that
    \begin{equation}\label{eq:Equiv-i-j-2}
        \begin{aligned}
            \left\|\widehat{b}_i - b_i^*\right\|_{n,j}^2 \leq &~ \dfrac{1}{n}(\widehat{b}_i - b_i^*)^2(0) + \dfrac{C}{n}\sum_{k=1}^{n-1}\int_{\mathbb{R}}(\widehat{b}_i - b_i^*)^2(x)\Gamma_{X,i}(k\Delta, x)\exp\left(\left|H_j(x)\right| + \left|H_i(x)\right|\right)dx.
        \end{aligned}
    \end{equation}
    Focusing on the function $x \mapsto H_j(x)$ for each $j \in \mathcal{Y}$, we have the following:
    \begin{equation*}
        \begin{aligned}
            \forall x \in \mathbb{R}, ~~ H_j(x) = &~ \int_{0}^{S(x)}\left(\dfrac{b_j^*}{\sigma^*} - \dfrac{\sigma^{*\prime}}{2}\right) \circ S^{-1}(u)du = \int_{0}^{x}\dfrac{1}{\sigma^*(v)}\left(\dfrac{b_j^*}{\sigma^*} - \dfrac{\sigma^{*\prime}}{2}\right)(v)dv\\
            = &~ \int_{0}^{x}\dfrac{b_j^*(v)}{\sigma^{*2}(v)}dv - \dfrac{1}{2}\log\left(\dfrac{\sigma^*(x)}{\sigma^*(0)}\right).
        \end{aligned}
    \end{equation*}
    Since $b_j^*$ is integrable on $\mathbb{R}$ and $\sigma^*$ elliptic under Assumption~\ref{ass:Ell}, we obtain
    \begin{equation}\label{eq:Equiv-i-j-3}
        \begin{aligned}
            \forall x \in \mathbb{R}, ~~ \left|H_j(x)\right| \leq &~ \dfrac{1}{\sigma_0^{*2}}\int_{\mathbb{R}}\left|b_j^*(v)\right|dv + \dfrac{1}{2}\left|\log\left(\dfrac{\sigma_1^*}{\sigma^*(0)}\right)\right| < \infty.
        \end{aligned}
    \end{equation}
    The final result is derived from Equations~\eqref{eq:Equiv-i-j-3} and \eqref{eq:Equiv-i-j-2}.
\end{proof}

\begin{proof}[\textbf{Proof of Lemma}~\ref{lm:Proba-LT}]
    From the Tanaka formula (see \cite{revuz2013continuous}, \textit{Chapter VI, Theorem 1.2, p. 222}), for all $x \in \mathbb{R}$,
\begin{align*}
    |X_1 - x| = &~ |X_0 - x| + \int_{0}^{1}{\mathrm{sgn}(X_{s} - x)dX_{s}} + \mathcal{L}^x\\
              = &~ |x| + \int_{0}^{1}\mathrm{sgn}(X_s - x)b_Y^*(X_s)ds + \int_{0}^{1}{\mathrm{sgn}(X_{s} - x)\sigma^*(X_s)dW_{s}} + \mathcal{L}^x 
\end{align*}
where for all $x \neq 0, ~ \mathrm{sgn}(x) = x/|x|$ and $\mathrm{sgn}(0) = -1$. For all $x \in \mathbb{R}$ and for all $\theta > 0$, we have
\begin{equation}\label{eq:res1}
    \begin{aligned}
        \mathbb{P}\left(\mathcal{L}^x \geq  \theta\right) = &~ \mathbb{P}\left(|X_1 - x| - \int_{0}^{1}\mathrm{sgn}(X_s - x)b_Y^*(X_s)ds - \int_{0}^{1}{\mathrm{sgn}(X_{s} - x)\sigma^*(X_s)dW_{s}} \geq |x| + \theta\right)\\
        \geq &~ \mathbb{P}\left(-\int_{0}^{1}{\mathrm{sgn}(X_s - x)\sigma^*(X_s)dW_s} \geq |x| + \theta - |X_1 - x| + \|b_0^*\|_{\infty} + \|b_1^*\|_{\infty}\right)\\
        \geq &~ \mathbb{P}\left(-\int_{0}^{1}{\mathrm{sgn}(X_s - x)\sigma^*(X_s)dW_s} \geq |x| + \theta + X_1 - x + \|b_0^*\|_{\infty} + \|b_1^*\|_{\infty}\right)\\
        \geq &~ \mathbb{P}\left(\int_{0}^{1}{\mathrm{sgn}(X_s - x)\sigma^*(X_s)dW_s} \leq \theta \Biggm\vert \mathcal{A}(x)\right)\mathbb{P}\left(\mathcal{A}(x)\right)\\
        = &~ \left[1 - \mathbb{P}\left(\int_{0}^{1}{\mathrm{sgn}(X_s - x)\sigma^*(X_s)dW_s} \geq \theta \Biggm\vert \mathcal{A}(x)\right)\right] \mathbb{P}\left(\mathcal{A}(x)\right),
    \end{aligned}
\end{equation}
where $\mathcal{A}(x) = \left\{X_1 \leq -|x| + x - \|b_0^*\|_{\infty} - \|b_1^*\|_{\infty} - 2\theta\right\}$. From \cite{van1995exponential}, \textit{Lemma 2.1}, we have
\begin{equation}\label{eq:res2}
    \mathbb{P}\left(\int_{0}^{1}{\mathrm{sgn}(X_s - x)\sigma^*(X_s)dW_s} \geq \theta \Biggm\vert \mathcal{A}(x)\right) \leq \exp\left(-\dfrac{\theta^2}{2\sigma_1^{*2}}\right).
\end{equation}
Setting $\psi(x, \theta) = -|x| + x - \|b_0^*\|_{\infty} - \|b_1^*\|_{\infty} - 2\theta$ for all $ x \in \mathbb{R}$ and from Equation~\eqref{eq:Transition-density}, there exist constants $c,C>1$ such that
\begin{equation}\label{eq:res3}
    \begin{aligned}
        \mathbb{P}\left(\mathcal{A}(x)\right) = &~ \int_{-\infty}^{\psi(x, \theta)}{\Gamma_{X}(1, y)dy} \geq \dfrac{1}{C}\int_{-\psi(x, \theta)}^{+\infty}{\exp\left(-cy^2\right)dy}\\
       \geq &~ \dfrac{1}{C}\int_{-\psi(x, \theta)}^{-\psi(x, \theta) + \theta}{\exp\left(-cy^2\right)dy} \geq \dfrac{\theta}{C}\exp\left(- c(-\psi(x, \theta) + \theta)^2\right).
    \end{aligned}
\end{equation}
We finally deduce from Equations~\eqref{eq:res1}, \eqref{eq:res2} and \eqref{eq:res3} that for all $x \in \mathbb{R}$ and for all $\theta > 0$,
\begin{equation*}
    \mathbb{P}\left(\mathcal{L}^x \geq \theta\right) \geq \dfrac{\theta}{C}\left[1 - \exp\left(-\dfrac{\theta^2}{2\sigma_1^{*2}}\right)\right]\exp\left(-c(-\psi(x, \theta) + \theta)^2\right).
\end{equation*}
We consider in this paper a version of the local time $x \mapsto \mathcal{L}^x$ that is almost surely continuous (see \cite{revuzyor1999}, \textit{Chap. VI, Corollary 1.8, p. 226}).  Then, there exists $x^* \in [A, B]$ such that
\begin{equation*}
    \underset{x \in [A,B]}{\inf}{~\mathcal{L}^x} = \mathcal{L}^{x^*} ~~~ a.s.
\end{equation*}
Finally, setting $\mathbf{b}_{\infty}^* = \|b_0^*\|_{\infty} + \|b_1^*\|_{\infty}$, we obtain for all $\theta > 0$,
\begin{equation*}
    \begin{aligned}
        \mathbb{P}\left(\underset{x \in [A,B]}{\inf}{~\mathcal{L}^x} \geq \theta\right) = &~ \mathbb{P}\left(\mathcal{L}^{x^*} \geq \theta\right)\\
        \geq &~ \dfrac{1}{C}\left[1 - \exp\left(-\dfrac{\theta^2}{2\sigma_1^{*2}}\right)\right]\exp\left(-c(-\psi(x^*, \theta) + \theta)^2\right)\\
        \geq &~ \dfrac{\theta}{C}\left[1 - \exp\left(-\dfrac{\theta^2}{2\sigma_1^{*2}}\right)\right]\exp\left(-c(|A| + |B| + \mathbf{b}_{\infty}^* + 3\theta)^2\right).
    \end{aligned}
\end{equation*}
\end{proof}

\begin{proof}[\textbf{Proof of Lemma~\ref{lm:Identity-NormX}}]
Using the occupation formula (see, \textit{e.g.}, \cite{ella2024nonparametric}, \textit{Lemma 3.1}), we obtain for any nonnegative, continuous, and compactly supported function $f$,
    \begin{equation*}
        \int_{0}^{1}f(X_s)ds = \int_{\mathrm{Supp}(f)}f(x)\mathcal{L}^xdx.
    \end{equation*}
    Moreover, we have the following.
    \begin{align*}
        \left(\int_{\mathrm{Supp}(f)}f(x)dx\right)\underset{x \in \mathrm{Supp}(f)}{\inf}{~\mathcal{L}^x} \leq \int_{\mathrm{Supp}(f)}f(x)\mathcal{L}^xdx \leq \left(\int_{\mathrm{Supp}(f)}f(x)dx\right)\underset{x \in \mathrm{Supp}(f)}{\sup}{~\mathcal{L}^x}.
    \end{align*}
    Applying the above result to the function $\w{b} - b$, we obtain
    \begin{equation*}
        \begin{aligned}
            \left\|\w{b} - b\right\|_X^2 = &~ \int_{\mathrm{Supp}(b)}(\w{b} - b)^2(x)\mathcal{L}^xdx \leq \left(\underset{x \in \mathrm{Supp}(f)}{\sup}{~\mathcal{L}^x}\right)\left\|\w{b} - b\right\|^2.
        \end{aligned}
    \end{equation*}
    Since $\mathbb{E}\left[\left\|\widehat{b} - b\right\|_n^2\right] \rightarrow 0$ as $N,n \rightarrow \infty$ and 
    \begin{equation*}
        \mathbb{E}\left[\left\|\w{b} - b\right\|_n^2\right] \leq \mathbb{E}\left[\left\|\widehat{b} - b\right\|_n^2\right],
    \end{equation*}
    we obtain $\mathbb{E}\left[\left\|\w{b} - b\right\|_n^2\right]\rightarrow 0$ as $N,n \rightarrow \infty$. In addition, we have
    \begin{align*}
        \mathbb{E}\left[\left\|\w{b} - b\right\|_n^2\right] \geq \int_{\mathrm{Supp}(b)}(\w{b} - b)^2(x)f_n(x)dx,  
    \end{align*}
    where $f_n: x \mapsto (1/n)\sum_{k=1}^{n-1}\Gamma_X(k\Delta, x)$ is a continuous function that is lower bounded by a strictly positive constant on the compact subset $\mathrm{Supp}(b)$ of $\mathbb{R}$ (see \cite{denis2021ridge}, \textit{lemma 4.3}). Then, there exists a constant $\pi_0>0$ such that
     \begin{align*}
        \mathbb{E}\left[\left\|\w{b} - b\right\|_n^2\right] \geq \pi_0\mathbb{E}\left[\int_{\mathrm{Supp}(b)}(\w{b} - b)^2(x)dx\right] = \pi_0\mathbb{E}\left[\left\|\w{b} - b\right\|^2\right].  
    \end{align*}
    It follows that $\mathbb{E}\left[\left\|\w{b} - b\right\|^2\right] \rightarrow 0$ as $N,n \rightarrow \infty$. Using the Markov inequality, for all $\delta > 0$, we obtain
    \begin{equation}\label{eq:cv1}
        \begin{aligned}
            \mathbb{P}\left(\left\|\w{b} - b\right\| > \delta\right) \leq \delta^{-2}\mathbb{E}\left[\left\|\w{b} - b\right\|^2\right] \longrightarrow 0 ~~ \mathrm{as} ~~ N,n \rightarrow \infty.
        \end{aligned}
    \end{equation}
    We now study the convergence in probability of the random variable $\int_{\mathrm{Supp}(b)}\w{b}(x)dx$ to $\int_{\mathrm{Supp}(b)}b(x)dx$. To this end, we have, on the one hand,
    \begin{align*}
        \mathbb{E}\left[\left|\int_{\mathrm{Supp}(b)}\w{b}(x)dx - \int_{\mathrm{Supp}(b)}b(x)dx\right|\right] \leq &~ \mathbb{E}\left[\int_{\mathrm{Supp}(b)}\left|\w{b} - b\right|(x)dx\right] \leq \left(\mathbb{E}\left[\left\|\w{b} - b\right\|^2\right]\right)^{1/2},
    \end{align*}
    on the other hand, using Markov's inequality, we have for all $\delta > 0$,
    \begin{equation}\label{eq:cv2}
        \begin{aligned}
            \mathbb{P}\left(\left|\int_{\mathrm{Supp}(b)}\w{b}(x)dx - \int_{\mathrm{Supp}(b)}b(x)dx\right| > \delta\right) \leq &~ \delta^{-1}\mathbb{E}\left[\left|\int_{\mathrm{Supp}(b)}\w{b}(x)dx - \int_{\mathrm{Supp}(b)}b(x)dx\right|\right]\\
        \leq &~ \delta^{-1}\left(\mathbb{E}\left[\left\|\w{b} - b\right\|^2\right]\right)^{1/2}.
        \end{aligned}
    \end{equation}
    It follows that
    \begin{align*}
        \int_{\mathrm{Supp}(b)}\w{b}(x)dx \underset{N,n \rightarrow \infty}{\overset{\mathbb{P}}{\longrightarrow}} \int_{\mathrm{Supp}(b)}b(x)dx > 0.
    \end{align*}
    Let $\mathfrak{C} > 0$ be a real number to be chosen later and set $\mathcal{O} = \mathcal{O}_1 \cap \mathcal{O}_2 \cap \mathcal{O}_3$ where
    \begin{align*}
        & \mathcal{O}_1 = \left\{\left\|\w{b} - b\right\| \leq 1/2\right\}, ~~ \mathcal{O}_2 = \left\{\underset{x \in \mathrm{Supp}(b)}{\sup}\mathcal{L}^x \leq \mathfrak{C}\right\},\\ 
        \mathrm{and} ~~ & \mathcal{O}_3 = \left\{\left|\int_{\mathrm{Supp}(b)}\w{b}(x)dx - \int_{\mathrm{Supp}(b)}b(x)dx\right| \leq \dfrac{1}{2}\int_{\mathrm{Supp}(b)}b(x)dx\right\}.
    \end{align*} 
    For any $\alpha \in (0,1)$ and for $N,n$ large enough, we have the following.
\begin{align*}
    \mathbb{P}\left(\int_{0}^{1}(\w{b}-b)(X_s)ds \geq \alpha\left\|\w{b}-b\right\|_X\right)\geq &~ \mathbb{P}\left(\int_{0}^{1}(\w{b}-b)(X_s)ds \geq \alpha\left(\underset{x \in \mathrm{Supp}(b)}{\sup}\mathcal{L}^x\right)^{1/2}\left\|\w{b}-b\right\|\right)\\
    \geq &~ \mathbb{P}\left(\int_{0}^{1}(\w{b}-b)(X_s)ds \geq \alpha\mathfrak{C}/2 \biggm\vert \mathcal{O}\right)\mathbb{P}\left(\mathcal{O}\right)\\
    \geq &~ \mathbb{P}\left(\int_{0}^{1}\w{b}(X_s)ds \geq \alpha\mathfrak{C}/2 + \left\|b\right\|_{\infty} \biggm\vert \mathcal{O}\right)\mathbb{P}\left(\mathcal{O}\right).
\end{align*}
   Since 
   $$\int_{0}^{1}\w{b}(X_s)ds \geq \left(\int_{\mathrm{Supp}(b)}\w{b}(x)dx\right)\underset{x \in \mathrm{Supp}(b)}{\inf}{~\mathcal{L}^x} ~~ \mathrm{and} ~~ \underset{x \in \mathrm{Supp}(b)}{\sup}\mathcal{L}^x \perp\!\!\!\perp \left\|\w{b}-b\right\|,$$
    We obtain the following:
    \begin{multline}\label{eq:Eq1}
         \mathbb{P}\left(\int_{0}^{1}(\w{b}-b)(X_s)ds \geq \alpha\left\|\w{b}-b\right\|_X\right) \\
        \geq  \mathbb{P}\left(\underset{x \in \mathrm{Supp}(b)}{\inf}{~\mathcal{L}^x} \geq \dfrac{\alpha\mathfrak{C}/2 + \|b\|_{\infty}}{\int_{\mathrm{Supp}(b)}\w{b}(u)du} \biggm\vert \mathcal{O}\right)\mathbb{P}\left(\mathcal{O}_1 \cap \mathcal{O}_2\right)\mathbb{P}\left(\mathcal{O}_3\right)\\
        \geq  \mathbb{P}\left(\underset{x \in \mathrm{Supp}(b)}{\inf}{~\mathcal{L}^x} \geq \dfrac{\alpha\mathfrak{C}/2 + \|b\|_{\infty}}{(1/2)\int_{\mathrm{Supp}(b)}b(u)du} \biggm\vert \mathcal{O}\right)\mathbb{P}\left(\mathcal{O}_1 \cap \mathcal{O}_2\right)\mathbb{P}\left(\mathcal{O}_3\right).
    \end{multline}
  From the result of Lemma~\ref{lm:Proba-LT} with 
  $$\theta = \dfrac{\alpha\mathfrak{C}/2 + \|b\|_{\infty}}{(1/2)\int_{\mathrm{Supp}(b)}b(u)du}, $$
  there exist constants $C,c>1$ such that
  \begin{equation}\label{eq:Eq2}
      \begin{aligned}
          \mathbb{P}\left(\underset{x \in \mathrm{Supp}(b)}{\inf}{~\mathcal{L}^x} \geq \theta \biggm\vert \mathcal{O}\right) \geq \dfrac{\theta}{C}\left[1 - \exp\left(-\dfrac{\theta^2}{2\sigma_1^{*2}}\right)\right]\exp\left(-c(C_b + 3\theta)^2\right),
      \end{aligned}
  \end{equation}
  where $C_b = \left|\min\{\mathrm{Supp}(b)\}\right| + \left|\max\{\mathrm{Supp}(b)\}\right| + \|b\|_{\infty}$. Focusing on $\mathbb{P}\left(\mathcal{O}_3\right)$, recall that the local time $x \mapsto \mathcal{L}^x$ is chosen to be almost surely continuous. Then, there exists $x^{**} \in \mathrm{Supp}(b)$ such that 
   $$\underset{x \in \mathrm{Supp}(b)}{\sup}\mathcal{L}^x = \mathcal{L}^{x^{**}} ~~ a.s.$$
   Using the Markov inequality together with Lemma 3.1 in \cite{ella2024nonparametric} and Equation~\eqref{eq:Transition-density}, there exist constants $C,c>1$ such that
   \begin{equation*}
       \begin{aligned}
           \mathbb{P}\left(\mathcal{O}_3\right) = &~ 1 - \mathbb{P}\left(\mathcal{L}^{x^{**}} \geq \mathfrak{C}\right)\\
           \geq &~ 1 - \mathfrak{C}^{-1}\mathbb{E}\left[\mathcal{L}^{x^{**}}\right] = 1 - \mathfrak{C}^{-1}\int_{0}^{1}\Gamma_X(s,x^{**})ds\\
            \geq &~ 1 - C\mathfrak{C}^{-1}\int_{0}^{1}s^{-1/2}ds = 1 - 2C\mathfrak{C}^{-1}.
       \end{aligned}
   \end{equation*}
    Now for $\mathbb{P}\left(\mathcal{O}_1 \cap \mathcal{O}_2\right)$, from Equation~\eqref{eq:cv1} with $\delta = 1/2$ and Equation~\eqref{eq:cv2} with $\delta = (1/2)\int_{\mathrm{Supp}(b)}b(x)dx$, we obtain
    \begin{equation*}
        \begin{aligned}
            \mathbb{P}\left(\mathcal{O}_1 \cap \mathcal{O}_2\right) &~ \geq 1 - \mathbb{P}\left(\mathcal{O}_1^c\right) - \mathbb{P}\left(\mathcal{O}_2^c\right)\\
            \geq &~ 1 - 4\mathbb{E}\left[\left\|\w{b} - b\right\|^2\right] - \left(\mathbb{E}\left[\left\|\w{b} - b\right\|^2\right]\right)^{1/2}\dfrac{2}{\int_{\mathrm{Supp}(b)}b(x)dx}.
        \end{aligned}
    \end{equation*}
    Since $\mathbb{E}\left[\left\|\w{b} - b\right\|^2\right] \rightarrow 0$ as $N,n \rightarrow \infty$, for $N$ and $n$ large enough, we have 
    $$\mathbb{E}\left[\left\|\w{b} - b\right\|^2\right] \leq \left(8 + \dfrac{4}{\int_{\mathrm{Supp}(b)}b(x)dx}\right)^{-2},$$
    which implies that $\mathbb{P}\left(\mathcal{O}_1 \cap \mathcal{O}_2\right) \geq 1/2$. Then, for $\mathfrak{C} \in (4C, +\infty)$, we obtain the following:
    \begin{equation}\label{eq:Eq3}
         \mathbb{P}\left(\mathcal{O}_1 \cap \mathcal{O}_2\right)\mathbb{P}\left(\mathcal{O}_3\right) \geq \dfrac{1}{2} - C\mathfrak{C}^{-1} > 0.
    \end{equation}
    Finally, since $\alpha = \min\left\{(1/4)\int_{\mathrm{Supp}(b)}b(x)dx, 1/3\right\} \in (0,1)$, we choose $\mathfrak{C}$ so that
    $$\mathfrak{C} = \max\left\{4C, \dfrac{4\|b\|_{\infty}}{\int_{\mathrm{Supp}(b)}b(x)dx - 2\alpha}\right\} > 0.$$
    We deduce from Equations~\eqref{eq:Eq3}, \eqref{eq:Eq2} and \eqref{eq:Eq1} that
    \begin{equation}\label{eq:Eq4}
        \begin{aligned}
            \mathbb{P}\left(\int_{0}^{1}(\w{b}-b)(X_s)ds \geq \alpha\left\|\w{b}-b\right\|_X\right) \geq c,
        \end{aligned}
    \end{equation}
    where
    \begin{align*}
        c = \dfrac{\theta}{C}\left(1 - C\mathfrak{C}^{-1}\right)\left[1 - \exp\left(-\dfrac{\theta^2}{2\sigma_1^{*2}}\right)\right]\exp\left(-c(C_b + 3\theta)^2\right) > 0.
    \end{align*}
\end{proof}

\begin{proof}[\textbf{Proof of Proposition}~\ref{prop:ChangeProba}]
    For each $i \in \mathcal{Y}$, set $\mathbb{P}_{i} = \mathbb{P}(.|Y=i)$. Under Assumptions~\ref{ass:Reg}, \ref{ass:Ell} and \ref{ass:Novikov}, denote by $\mathbb{Q}$, the probability under which the diffusion process $X = (X_t)_{t \in [0,1]}$ is solution of the stochastic differential equation $dX_t = d\w{W}_t$, where $\w{W} = (\w{W}_t)_{t \in [0,1]}$ is a Brownian motion under $\mathbb{Q}$. From the Girsanov's theorem (see \cite{revuz2013continuous}, Chapter VIII), we have for each $i \in \mathcal{Y}$,
    \begin{equation*}
        \dfrac{d\mathbb{P}_i}{d\mathbb{Q}}(X^{(t)}) := \exp\left(\int_{0}^{t}{b_i^*(X_s)dX_s - \dfrac{1}{2}\int_{0}^{t}{b_i^{*2}(X_s)ds}}\right) = \exp\left(\int_{0}^{t}{b_i^*(X_s)dW_s + \dfrac{1}{2}\int_{0}^{t}{b_i^{*2}(X_s)ds}}\right),
    \end{equation*}
    where $X^{(t)} = (X_s)_{s \in [0,t]}$, and $X^{(1)} = X$. Then, under Assumption~\ref{ass:Reg2}, for all $i,j \in \mathcal{Y}$ such that $i \neq j$, there exist constants $C, C_1>0$ such that
    \begin{equation*}
        \begin{aligned}
            \dfrac{d\mathbb{P}_i}{d\mathbb{P}_j}(X^{(t)}) = &~ \exp\left(\int_{0}^{t}{(b_i^* - b_j^*)(X_s)dW_s + \dfrac{1}{2}\int_{0}^{t}{(b_i^{*2} - b_j^{*2})(X_s)ds}}\right)\\ 
            \leq &~  \exp\left(M^{i,j}_t\right)\exp\left(\dfrac{C^*}{2}\int_{0}^{t}{(|b_i^*(X_s)|+|b_j^*(X_s)|)ds}\right)\\
            \leq &~ C\exp\left(M^{i,j}_{k\Delta}\right)\exp\left(C_1\underset{t \in [0,1]}{\sup}{|X_t|}\right),
        \end{aligned}
    \end{equation*}
    where $(M^{i,j}_t)_{t \in [0,1]}$ is a martingale given by
    \begin{equation*}
        M^{i,j}_t := \int_{0}^{t}{(b_i^* - b_j^*)(X_s)dW_s}, ~~ t \in [0,1],
    \end{equation*}
    and under Assumption~\ref{ass:Reg2}, for all $t \in [0,1]$,
    \begin{equation}\label{eq:CP00}
        \left<M^{i,j}, M^{i,j}\right>_t = \int_{0}^{t}{(b_i^* - b_j^*)^2(X_s)ds} \leq C^{*2}.
    \end{equation}
    For $i,j \in \mathcal{Y}$ such that $ i \neq j$, under Assumption~\ref{ass:Reg}, there exists a constant $C_1>0$ such that
    \begin{equation*}
        \begin{aligned}
            \left\|\w{b}_i - b_i^*\right\|_{n,j}^2 = &~ \dfrac{1}{n}\sum_{k=0}^{n-1}{\mathbb{E}_{X|Y=j}\left[\left(\w{b}_i - b_i^*\right)^2(X_{k\Delta})\right]} = \dfrac{1}{n}\sum_{k=0}^{n-1}{\mathbb{E}_{X|Y=i}\left[\left(\w{b}_i - b_i^*\right)^2(X_{k\Delta})\dfrac{d\mathbb{P}_j}{d\mathbb{P}_i}(X^{(k\Delta)})\right]}\\
            \leq &~ \dfrac{C}{n}\sum_{k=0}^{n-1}{\mathbb{E}_{X|Y=i}\left[\left(\w{b}_i - b_i^*\right)^2(X_{k\Delta})\exp\left(M^{j,i}_{k\Delta}\right)\exp\left(C_1\underset{t \in [0,1]}{\sup}{|X_t|}\right)\right]}.
        \end{aligned}
    \end{equation*}
    
    For all $a > 0$, using the Cauchy Schwarz inequality, we have
    \begin{equation}\label{eq:EQij1}
        \begin{aligned}
            \left\|\w{b}_i - b_i^*\right\|_{n,j}^2 \leq &~ C\exp(C_1a)\left\|\w{b}_i - b_i^*\right\|_{n,i}^2 + \Xi_1 + \Xi_2 + \Xi3,
        \end{aligned}
    \end{equation}
    where
    \begin{equation}\label{eq:Xi}
        \begin{aligned}
            \Xi_1 := &~ \dfrac{C\exp(a)}{n}\sum_{k=0}^{n-1}{\mathbb{E}_{X|Y=i}\left[\left(\w{b}_i - b_i^*\right)^2(X_{k\Delta})\exp\left(C_1\underset{t \in [0,1]}{\sup}{|X_t|}\right)\mathds{1}_{\underset{t \in [0,1]}{\sup}{|X_t|} > a}\right]}\\
            \Xi_2 := &~ \dfrac{C\exp(C_1a)}{n}\sum_{k=0}^{n-1}{\mathbb{E}_{X|Y=i}\left[\left(\w{b}_i - b_i^*\right)^2(X_{k\Delta})\exp\left(M^{j,i}_{k\Delta}\right)\mathds{1}_{M_{k\Delta}^{i,j} > a}\right]}\\
            \Xi_3 := &~  \dfrac{C}{n}\sum_{k=0}^{n-1}{\mathbb{E}_{X|Y=i}\left[\left(\w{b}_i - b_i^*\right)^2(X_{k\Delta})\exp\left(M^{j,i}_{k\Delta}\right)\exp\left(C_1\underset{t \in [0,1]}{\sup}{|X_t|}\right)\mathds{1}_{M_{k\Delta}^{i,j} > a}\mathds{1}_{\underset{t \in [0,1]}{\sup}{|X_t|} > a}\right]}.
        \end{aligned}
    \end{equation}
    We essentially focus on $\Xi_3$. The upper bounds on $\Xi_1$ and $\Xi_2$ will be deduce from the proof of the upper bound of $\Xi_3$. Then, we have
    \begin{equation}\label{eq:Xi3}
        \begin{aligned}
            \Xi_3 \leq &~ \dfrac{C}{n}\sum_{k=0}^{n-1}{\left(\mathbb{E}_{X|Y=i}\left[(\w{b}_i - b_i^*)^4(X_{k\Delta})\right]\right)^{1/2}\left(\mathbb{E}_{X|Y=i}\left[T_{k\Delta}^2V_1^2\right]\right)^{1/2}}\\
             \leq &~ \dfrac{C}{n}\sum_{k=0}^{n-1}{\left(\mathbb{E}_{X|Y=i}\left[(\w{b}_i - b_i^*)^4(X_{k\Delta})\right]\right)^{1/2}\left(\mathbb{E}_{X|Y=i}\left[T_{k\Delta}^4\right]\right)^{1/4}\left(\mathbb{E}_{X|Y=i}\left[V_1^4\right]\right)^{1/4}},
        \end{aligned}
    \end{equation}
    where
    \begin{align*}
        T_{k\Delta} = \exp\left(M_{k\Delta}^{j,i}\right)\mathds{1}_{M_{k\Delta}^{j,i} > a}, ~~ V_1 = \exp\left(C_1\underset{t \in [0,1]}{\sup}{|X_t|}\right)\mathds{1}_{\underset{t \in [0,1]}{\sup}{|X_t|} > a}.
    \end{align*}

     \subsection*{Upper bound of $\mathbb{E}_{X|Y=i}\left[(\w{b}_i - b_i^*)^4(X_{k\Delta})\right]$}

    Under Assumption~\ref{ass:Reg} and from Equation~\eqref{eq:Moments}, for all $k \in \{0, \ldots, n-1\}$ and for $N$ large enough,
    \begin{equation}\label{eq:EQij2}
        \begin{aligned}
            \mathbb{E}_{X|Y=i}\left[(\w{b}_i - b_i^*)^4(X_{k\Delta})\right] \leq &~ 4\mathbb{E}_{X|Y=i}\left[\w{b}_i^4(X_{k\Delta}) + b_i^{*4}(X_{k\Delta})\right]\\
            \leq &~ 4\left\|\w{b}_i\right\|_{\infty}^4 + C\left(1 + \mathbb{E}_{X|Y=i}\left[\underset{t \in [0,1]}{\sup}{|X_t|^4}\right]\right)\\
            \leq &~ C\log^4(N).
        \end{aligned}
    \end{equation}

    \subsection*{Upper bound of $\mathbb{E}_{X|Y=i}\left[T_{k\Delta}^4\right]$}
    
    For all $k \in \{0, \ldots, n-1\}$, we have
    \begin{equation*}
        \begin{aligned}
            \mathbb{E}_{X|Y=i}\left[T_{k\Delta}^4\right] = &~ \mathbb{E}_{X|Y=i}\left[\exp\left(4M_{k\Delta}^{j,i}\right)\mathds{1}_{M_{k\Delta}^{j,i} > a}\right]\\
            \leq &~  \sqrt{\mathbb{P}_i\left(M_{k\Delta}^{j,i} > a\right)\mathbb{E}_{X|Y=i}\left[\exp\left(4M_{k\Delta}^{j,i} - 8\left<M^{j,i}, M^{j,i}\right>_{k\Delta}\right)\exp\left(8\left<M^{j,i}, M^{j,i}\right>_{k\Delta}\right)\right]}.
        \end{aligned}
    \end{equation*}
    From \cite{van1995exponential}, \textit{Lemma 2.1}, there exists a constant $c > 0$ such that 
    $$\mathbb{P}_i\left(M_{k\Delta}^{j,i} > a\right) \leq \exp(-a^2/c),$$ 
    and from Equation~\eqref{eq:CP00}, $\exp\left(8\left<M^{j,i}, M^{j,i}\right>_{t}\right) < \infty$ for all $t \in [0,1]$, which implies that
    \begin{equation*}
        \underset{t \in [0,1]}{\sup}{\mathbb{E}_{X|Y=i}\left[\exp\left(8\left<M^{j,i}, M^{j,i}\right>_{t}\right)\right]} < \infty.
    \end{equation*}
    Then from \cite{kazamaki2006continuous}, \textit{Chapter 1, Theorem 1.6, page 9}, $\left(\exp\left(4M_t^{j,i} - 8\left<M^{j,i}, M^{j,i}\right>_t\right)\right)_{t \in [0,1]}$ is a uniformly integrable martingale with respect to natural filtration $\mathcal{F}^M = \left(\mathcal{F}_t^{M}\right)_{t \in [0,1]}$, and we have
    \begin{equation*}
        \begin{aligned}
        &\mathbb{E}_{X|Y=i}\left[\exp\left(4M_{k\Delta}^{j,i} - 8\left<M^{j,i}, M^{i,j}\right>_{k\Delta}\right)\right]\\
        &= \mathbb{E}_{X|Y=i}\left[\mathbb{E}_{X|Y=i}\left(\exp\left(4M_{k\Delta}^{j,i} - 8\left<M^{j,i}, M^{j,i}\right>_{k\Delta}\right) \Biggm\vert \mathcal{F}_0^M\right)\right]\\
        &= \mathbb{E}_{X|Y=i}\left[\exp\left(4M_{0}^{j,i} - 8\left<M^{j,i}, M^{j,i}\right>_{0}\right)\right] = 1.
        \end{aligned}
    \end{equation*}
    We deduce that there exists a constant $C>0$ such that
    \begin{equation}\label{eq:EQij3}
        \mathbb{E}_{X|Y=i}\left[T_{k\Delta}^4\right] \leq C\exp\left(-\frac{a^2}{2c}\right).
    \end{equation}    

    \subsection*{Upper bound of $\mathbb{E}_{X|Y=i}\left[V_1^4\right]$}

    Using the Cauchy Schwarz inequality, we have
    \begin{equation*}
        \begin{aligned}
            \mathbb{E}_{X|Y=i}\left[V_1^4\right] = &~ \mathbb{E}_{X|Y=i}\left[\exp\left(4C_1\underset{t \in [0,1]}{\sup}{|X_t|}\right)\mathds{1}_{\underset{t \in [0,1]}{\sup}{|X_t|} > a}\right]\\
            \leq &~ \sqrt{\mathbb{E}_{X|Y=i}\left[\exp\left(8C_1\underset{t \in [0,1]}{\sup}{|X_t|}\right)\right]\mathbb{P}_i\left(\underset{t \in [0,1]}{\sup}{|X_t|} > a\right)}.
        \end{aligned}
    \end{equation*}
    Since the process $X$ is continuous, there exists $t_0 \in (0,1]$ such that $\underset{t \in [0,1]}{\sup}{|X_t|} = |X_{t_{0}}| ~~ a.s.$, and from Equation~\eqref{eq:Transition-density}, we have
    \begin{equation*}
        \begin{aligned}
            \mathbb{E}_{X|Y=j}\left[\exp\left(8C_1\underset{t \in [0,1]}{\sup}{|X_t|}\right)\right] \leq &~ \exp\left(8C_1\delta\right)\mathbb{E}_{X|Y=j}\left[\exp\left(8C_1|X_{t_{\delta}}|\right)\right]\\
            \leq &~ 2\exp\left(8C_1\delta\right)\int_{0}^{+\infty}{\dfrac{1}{C\sqrt{t_{\delta}}}\exp\left(8C_1x - \dfrac{x^2}{ct_{\delta}}\right)dx} < \infty.
        \end{aligned}
    \end{equation*}
    On the other hand, from Equation~\eqref{eq:Transition-density}, we have
    \begin{equation*}
        \begin{aligned}
            \mathbb{P}_i\left(\underset{t \in [0,1]}{\sup}{|X_t|} > a\right) = &~ \mathbb{P}\left(|X_{t_0}| > a\right) = \int_{|x|>a}\Gamma_X(t_0, x)dx\\
            \leq &~ \dfrac{2C}{\sqrt{t_0}}\int_{a}^{+\infty}\exp\left(-\dfrac{x^2}{ct_0}\right)dx\\
            \leq &~ \dfrac{C}{a}\exp\left(-\dfrac{a^2}{ct_0}\right),
        \end{aligned}
    \end{equation*}
    where $C>0$ is a new constant depending on $t_0>0$. Then, there exists a constant $C>0$ such that 
    \begin{equation}\label{eq:EQij4}
        \mathbb{E}_{X|Y=i}\left[V_1^4\right] \leq \dfrac{C}{\sqrt{a}}\exp\left(-\dfrac{a^2}{2ct_0}\right).
    \end{equation}
    From Equations~\eqref{eq:EQij4}, \eqref{eq:EQij3}, \eqref{eq:EQij2} and \eqref{eq:Xi3}, there exists a constant $C>0$ such that
    \begin{equation}\label{eq:Xi3bis}
        \Xi_3 \leq \dfrac{C}{\sqrt{a}}\left[\log^4(N)\exp\left(-\dfrac{a^2}{c} - \dfrac{a^2}{2ct_0}\right)\right].
    \end{equation}
    Moreover, using the Cauchy-Schwarz inequality, we obtain the following result.
    \begin{equation}\label{eq:Xi1-2}
        \Xi_1 \leq C\dfrac{\exp(a)}{\sqrt{a}}\exp\left(-\dfrac{a^2}{2ct_0}\right), ~~~ \Xi_2 \leq C\exp(C_1a)\exp\left(-\dfrac{a^2}{c}\right).
    \end{equation}
    Finally, for $a = \sqrt{8c\log(N)}$, we deduce from Equations~\eqref{eq:Xi1-2}, \eqref{eq:Xi3} and \eqref{eq:EQij1}  that there exist constants $C, c > 0$ such that
    \begin{equation*}
        \left\|\w{b}_i - b_i^*\right\|_{n,j}^2 \leq C\exp\left(\sqrt{c\log(N)}\right)\left\|\w{b}_i - b_i^*\right\|_{n,i}^2 + C\exp\left(\sqrt{c\log(N)}\right)N^{-1}.
    \end{equation*}
\end{proof}

%%%%%%%%%%%%%%%%%%%%%%%%%%%%%%%%%%%%%%%%%%%%%%%%%%%%%%%%%%%%%%%%%%%%%%%%%%%%%%%%%%%%%%%%%%%%%%%%%%%%%%%%%%%%%%%%%%%%%%%%%%%%%%%%%%%%%%%%%%%%%%%%%%%%%%%%%%%%%

\end{document}